\documentclass[10pt,regno]{amsart}
\usepackage{amsmath,amsthm,amsfonts,amssymb,amscd,overpic,mathrsfs}
\usepackage{euscript,graphicx,color}
\usepackage{enumerate}

\usepackage{indentfirst}
\usepackage{amsmath}
\usepackage[backref=page]{hyperref}  
\usepackage{graphicx}


\makeatletter

\def\myMRbibitem{\@ifnextchar[\my@lbibitem\my@bibitem}

\def\mybiblabel#1#2{\@biblabel{{\hyperref{http://www.ams.org/mathscinet-getitem?mr=#1}{}{}{#2}}}}

\def\myhyperanchor#1{\Hy@raisedlink{\hyper@anchorstart{cite.#1}\hyper@anchorend}}

\def\my@lbibitem[#1]#2#3#4\par{%
    \item[\mybiblabel{#2}{#1}\myhyperanchor{#3}\hfill]#4%
    \@ifundefined{ifbackrefparscan}{}{\BR@backref{#3}}%
    \if@filesw{\let\protect\noexpand\immediate
       \write\@auxout{\string\bibcite{#3}{#1}}}\fi\ignorespaces%
}

\def\my@bibitem#1#2#3\par{%
    \refstepcounter\@listctr
    \item[\mybiblabel{#1}{\the\value\@listctr}\myhyperanchor{#2}\hfill]#3%
    \@ifundefined{ifbackrefparscan}{}{\BR@backref{#2}}%
    \if@filesw\immediate\write\@auxout
        {\string\bibcite{#2}{\the\value\@listctr}}\fi\ignorespaces%
}

\makeatother

\renewcommand*{\backref}[1]{}
\renewcommand*{\backrefalt}[4]{\quad \tiny 
    \ifcase #1 (Not cited.)%
    \or        (Cited on page~#2.)%
    \else      (Cited on pages~#2.)%
    \fi}


\theoremstyle{plain}
	\newtheorem{theo}{Theorem}
	\newtheorem{mcor}[theo]{Corollary}
	\newtheorem{mprop}[theo]{Proposition}

	\newtheorem{lemm}{Lemma}[section]
	\newtheorem{theor}[lemm]{Theorem}
	
	\newtheorem{prop}[lemm]{Proposition}
	\newtheorem{defi}[lemm]{Definition}
	
	\newtheorem{adde}[lemm]{Addendum}
\theoremstyle{definition}
	\newtheorem{clai}[lemm]{Claim}
	\newtheorem{fact}[lemm]{Fact}

         \newtheorem{mrem}[theo]{Remark}
         \newtheorem{rema}[lemm]{Remark}

\newtheorem*{Remark*}{Remark}


\def\MM{{\mathbb M}} \def\NN{{\mathbb N}}  
 \def\RR{{\mathbb R}}  \def\TT{{\mathbb T}}
   
 \def\ZZ{{\mathbb Z}}

\def\Si{\Sigma}
\def\La{\Lambda}

\def\De{\Delta}

\def\Ga{\Gamma}

\newcommand{\cC}{\mathcal{C}}\newcommand{\cD}{\mathcal{D}}
\newcommand{\cF}{\mathcal{F}}
\newcommand{\cI}{\mathcal{I}}\newcommand{\cJ}{\mathcal{J}}
\newcommand{\cM}{\mathcal{M}}\newcommand{\cO}{\mathcal{O}}\newcommand{\cP}{\mathcal{P}}
\newcommand{\cR}{\mathcal{R}}\newcommand{\cT}{\mathcal{T}}
\newcommand{\cU}{\mathcal{U}}\newcommand{\cV}{\mathcal{V}}\newcommand{\cW}{\mathcal{W}}
\newcommand{\cZ}{\mathcal{Z}}

\newcommand{\fD}{\mathfrak{D}}
\newcommand{\fF}{\mathfrak{F}}
\newcommand{\fP}{\mathfrak{P}}
\newcommand{\fd}{\mathfrak{d}}

\def\diff{\operatorname{Diff}}
\def\Diff{\operatorname{Diff}}

\def\dim{\operatorname{dim}}

\def\La{\Lambda}

\setcounter{tocdepth}{1}       
\setcounter{secnumdepth}{3}
\numberwithin{equation}{section}         


\newcommand{\mru}{{\mathrm{u}}}
\newcommand{\mrs}{{\mathrm{s}}}
\newcommand{\mruu}{{\mathrm{uu}}}
\newcommand{\mrss}{{\mathrm{ss}}}
\newcommand{\mrc}{{\mathrm{c}}}

\renewcommand{\setminus}{\smallsetminus}
\renewcommand{\emptyset}{\varnothing}





\newcommand{\eqdef}{\stackrel{\scriptscriptstyle\rm def}{=}}




\begin{document}

\title[A criterion for zero averages and support of ergodic measures]{A criterion for zero averages and full support of ergodic measures}
\date{Jan 26, 2012}

\author[Ch.~Bonatti]{Christian Bonatti}
\address{Institut de Math\'ematiques de Bourgogne}
\email{bonatti@u-bourgogne.fr}

\author[L.~J.~D\'\i az]{Lorenzo J.~D\'\i az}
\address{Departamento de Matem\'atica, Pontif\'{\i}cia Universidade Cat\'olica do Rio de Janeiro} 
\email{lodiaz@mat.puc-rio.br}

\author[J.~Bochi]{Jairo Bochi}
\address{Facultad de Matem\'aticas, Pontificia Universidad Cat\'olica de Chile}
\email{jairo.bochi@mat.puc.cl}

\begin{abstract}
 Consider  a homeomorphism $f$ defined on a compact metric space $X$
 and a continuous map 
 $\phi\colon X \to \RR$. We  provide an abstract criterion, called \emph{control at any scale with
 a long sparse tail} for a point $x\in X$ and the map $\phi$,  that guarantees that any
 weak$\ast$ limit measure $\mu$ of the 
 Birkhoff average of Dirac measures $\frac1n\sum_0^{n-1}\delta(f^i(x))$ is such that
 $\mu$-almost every point $y$ has a dense orbit in $X$ and the
 Birkhoff average of $\phi$ along the orbit of $y$ is zero. 
 
As an illustration of the strength of this criterion, we prove 
that the diffeomorphisms with nonhyperbolic ergodic measures form a 
$C^1$-open and dense subset
of the set of robustly transitive partially hyperbolic diffeomorphisms with one dimensional
nonhyperbolic central direction.
We also obtain applications for nonhyperbolic homoclinic classes.
\end{abstract}

\begin{thanks}{This research has been supported  [in part] by CAPES - Ci\^encia sem fronteiras,
CNE-Faperj, and CNPq-grants (Brazil),
EU Marie-Curie IRSES ``Brazilian-European partnership in Dynamical
Systems" (FP7-PEOPLE-2012-IRSES 318999 BREUDS), and 
Fondecyt project 1140202 (Chile).
The authors  acknowledge the hospitality of PUC-Rio and IMB. 
LJD thanks the hospitality and support of
ICERM - Brown University during the thematic semester 	
``Fractal Geometry, Hyperbolic Dynamics, and Thermodynamical Formalism".}
\end{thanks}
\keywords{Birkhoff average, 
ergodic measure, 
Lyapunov exponent, 
nonhyperbolic measure, 
partial hyperbolicity, 
transitivity}
\subjclass[2000]{%
37D25, 
37D35, 
37D30, 
28D99
}

\maketitle



\section{Introduction}

\subsection{Motivation and general setting}

This work is a part of a long-term project to attack the following general  
problem which  rephrases the opening question in \cite{GIKN} from a different perspective:
\emph{To what extent does  ergodic theory detect the nonhyperbolicity of a dynamical system?}

More precisely, we say that a  diffeomorphism $f$ is \emph{nonhyperbolic} if its non-wandering set is nonhyperbolic. 
We aim to know if such $f$ possesses \emph{nonhyperbolic ergodic measures} 
(i.e.  with some zero  \emph{Lyapunov exponent}) and if some of them fully reflect the nonhyperbolic
behaviour of $f$. For instance, we would like to know
\begin{itemize}
 \item what is their support,
 \item what is their entropy, and 
  \item how many Lyapunov exponents of the measures are zero.
\end{itemize}

In this generality, the answer to this question is negative. There are simple examples of (even analytic)  nonhyperbolic dynamical systems whose invariant measures 
are all hyperbolic and
even with Lyapunov exponents uniformly far from zero, see
for instance the logistic map $t\mapsto 4t(1-t)$ or the
surgery examples in \cite{BBS} where  a saddle of a uniformly hyperbolic set is replaced 
by non-uniformly hyperbolic sets, among others (more examples of different nature can be found in \cite{CLR,LOR}).
Nevertheless, these examples are very specific and fragile. Thus,  
one hopes that the ``great majority" of nonhyperbolic systems have nonhyperbolic ergodic measures 
which detect and truly 
reflect the nonhyperbolic behaviour of the dynamics. 
 
 Concerning this sort of questions,
a first wave of results, initiated with \cite{GIKN}, continued in  \cite{DG,BDG}, and culminated in 
\cite{CCGWY}, 
  show that the existence of nonhyperbolic
ergodic  measures for nonhyperbolic dynamical systems is quite general in the $C^1$-setting: 
for $C^1$-generic diffeomorphisms, every nonhyperbolic  homoclinic class 
supports a 
nonhyperbolic ergodic measure, furthermore under quite natural hypotheses  the support of the measure is the whole homoclinic class\footnote{See \cite[Main Theorem]{CCGWY} and also \cite[Theorem B and Proposition 1.1]{CCGWY}.
This last result states that the support of the nonhyperbolic measure 
in a nonhyperbolic homoclinic class of a saddle
is the whole homoclinic class. This result requires 
neither that the stable/unstable splitting of the saddle extends to 
a dominated splitting on the class (compare with \cite{BDG})  nor 
that the homoclinic  class contains saddles of different type of hyperbolicity (compare with \cite{DG}).}.

Given a periodic point $p$  of a diffeomorphism $f$ denote by $\mu_{\cO(p)}$ the 
unique $f$-invariant measure supported on the orbit of $p$. We say that such a measure
is {\emph{periodic}}. 
The previous works follow 
the strategy of periodic approximations in \cite{GIKN} for constructing a nonhyperbolic  ergodic measure
as weak$\ast$ limits of periodic measures $\mu_{\cO(p_n)}$ supported on orbits $\cO(p_n)$ of hyperbolic periodic points $p_n$ 
 with decreasing ``amount of hyperbolicity".
 The main difficulty  is to obtain the  ergodicity of the limit measure.
\cite{GIKN}  provides a criterion 
for ergodicity summarised in rough terms as follows. 
Each periodic orbit $\mathcal{O}(p_n)$ consists of two parts: a ``shadowing part"
where $\mathcal{O}(p_n)$ closely shadows the previous orbit $\mathcal{O}(p_{n-1})$ and a
``tail" where the orbit is far from the previous one. 
To get an ergodic limit measure one needs some balance between the ``shadowing" and the ``tail"
parts of the orbits.
The ``tail part" is used to decrease the amount of hyperbolicity of a given Lyapunov exponent 
(see \cite{GIKN})
and also  to spread the support of the limit measure, (see \cite{BDG}).

Nonhyperbolic measures seem very fragile as small perturbations may turn the zero Lyapunov exponent into a nonzero one. However, in  \cite{KN} 
there are obtained (using the method  in \cite{GIKN}) certain 
$C^1$-open sets of diffeomorphisms 
having nonhyperbolic ergodic measures.
Bearing this result  in mind, it is 
natural to ask if the existence of nonhyperbolic measures is a $C^1$-open and dense property in the 
space  of nonhyperbolic diffeomorphisms. 
In this direction, \cite[Theorem 4]{BoBoDi2} formulates an abstract criterion called
\emph{control at any scale}\footnote{This construction also involves the so-called {\emph{flip-flop families,}}
we will review these notions below as they play an important role in our constructions.}
that leads to the following result (see \cite[Theorems 1 and 3]{BoBoDi2}):
{\emph{The $C^1$-interior of the set of diffeomorphisms having a nonhyperbolic ergodic measure 
contains an open and  dense subset of  the set of  $C^1$-diffeomorphisms having a pair of hyperbolic  periodic points 
of different indices robustly in the same chain recurrence class.}}

The method in \cite{BoBoDi2} 
 provides  a partially hyperbolic 
invariant set with positive topological entropy whose central Lyapunov exponent vanishes uniformly. 
This set only supports nonhyperbolic measures and the existence of a measure with positive entropy is a consequence of  the variational principle for entropy~\cite{walters}. A con of this method is that 
the ``completely" nonhyperbolic nature of the (obtained) set where a Lyapunov exponent  vanishes uniformly prevents the measures to have full support in nonhyperbolic chain recurrence classes.
This shows that, in some sense, the criterion in \cite{BoBoDi2} may be ``too demanding" and ``rigid". 

The aim of  this paper is to
introduce a new criterion that
 relaxes the ``control at any scale criterion" and allows to get nonhyperbolic measures
 with ``full support" (in the appropriate
ambient space: homoclinic class, chain recurrence class, the whole manifold, according to the case).
To be a bit more precise, given a point $x$ and a diffeomorphism $f$
consider the  {\emph{empirical measures}}
$\mu_n(x)$, $n\in \NN$, associated to $x$ defined as  the averages of the Dirac measures $\delta(f^i(x))$ in the orbit segment $\{x,\dots,f^{n-1}(x)\}$,
\begin{equation}
\label{e.empiricalmeasure}
\mu_n(x)\eqdef \frac{1}{n}\, \sum_{i=0}^{n-1} \delta (f^i(x)).
\end{equation}
The  criterion in this paper,
called {\emph{control at any scale with a long sparse tail}}
with respect to a continuous map $\varphi$ of a point $x$,
allows 
to construct ergodic measures with full support (in the appropriate ambient space) and a prescribed 
average with respect to $\varphi$, see Theorem~\ref{t.accumulation}.
This construction
involves two main aspects of different nature: 
density of the orbits of  $\mu$-generic points and
control of averages. The existence of ergodic measures satisfying both properties is a consequence of 
the construction.

A specially interesting case
occurs  when the map $\varphi$
 is the derivative of a diffeomorphism
with respect to a continuous one-dimensional center direction (taking positive and negative values).
In such a case we get that every
measure $\mu$ that is a weak$\ast$ limit of a sequence 
of empirical measures of $x$ is such that
 $\mu$-almost every point
has a zero Lyapunov exponent and 
a dense orbit (in the corresponding ambient space), see Theorems~\ref{t.cycle} and \ref{t.ctail}.

To state more precisely the dynamical  consequences of  the criterion 
let us introduce some notation
(the precise definitions can be found below). 
In what follows we consider a boundaryless Riemannian compact manifold $M$ 
and the following two $C^1$-open
subsets of diffeomorphisms:
\begin{itemize}
\item 
The set  $\cR\cT(M)$  of all {\emph{robustly transitive diffeomorphisms}}\footnote{A diffeomorphism
is called transitive if it has a dense orbit. The diffeomorphism is $C^1$-robustly transitive if
$C^1$-nearby diffeomorphisms are also transitive.}
 with  a 
partially hyperbolic splitting  with one-dimensional (nonhyperbolic) center,
\item 
The set $\cZ(M)$ defined as the $C^1$-interior of the set of $C^1$-diffeomorphisms 
having a nonhyperbolic ergodic measure with full support in  $M$. 
\end{itemize}

As an application of our criterion we get that set $\cZ(M)\cap \cR\cT(M)$ is $C^1$-open and 
$C^1$-dense
in $\cR\cT(M)$, see Theorem~\ref{t.c.openanddense}.
We also get semi-local versions of this result formulated in terms of
nonhyperbolic homoclinic classes or/and chain recurrence classes, see Theorems~\ref{t.cycle} and~\ref{t.ctail}.
These results turn the $C^1$-generic statements in \cite{BDG} into  $C^1$-open and $C^1$-dense ones. 
We observe that a similar result involving  different methods was announced
in \cite{BJ}\footnote{\label{fn}The construction in \cite{BJ} combines  the criteria of periodic approximations
 in \cite{GIKN}  and of the control 
at any scale in \cite{BoBoDi2} and  
a shadowing lemma by Gan-Liao, \cite{G}.}.
Applications of the criterion in hyperbolic-like contexts,
as for instance full shifts and horseshoes, are discussed in Section~\ref{ss.medias}.


In this paper
we restrict ourselves to the control of the support and the averages of the measures, omitting  
questions related to the entropy of these measures. Nevertheless it 
seems that our method is well suited to construct nonhyperbolic ergodic measures with positive entropy and full support. This is the next step  of an ongoing project whose ingredients  involve tools of a very different nature beyond the scope of this paper.

In the dynamical applications we
focus on partially hyperbolic diffeomorphisms with a one-dimensional center bundle
and therefore the measures may  have at most one zero Lyapunov exponent. Here we do not consider the case of
higher dimensional central bundles and the possible occurrence of multiple zero exponents.
Up to now, there are quite few results on multiple zero  Lyapunov exponents. The simultaneous
control of several exponents is much more difficult, essentially
due   to the non-commutativity  of $\mathrm{GL}(n,\RR)$ for $n>1$. We refer to 
\cite{BoBoDi} for examples of ($C^1$ and $C^2$) robust existence  of ergodic measures with multiple zero exponents in the context of 
iterated function systems. Recently, \cite{WZ} announces the locally $C^1$-generic vanishing of several Lyapunov exponents
in homoclinic classes of diffeomorphisms.

We now  describe our methods and results in  a more detailed way.

\subsection{A criterion for controlling averages of continuous maps}
\label{ss.abstractcriterion}
Consider a compact metric space $(X,d)$, a homeomorphism $f$ defined on $X$, and a continuous map
$\varphi\colon X \to \RR$.
Given a point $x\in X$ 
consider the set of {\emph{empirical measures}}
$\mu_n(x)$ associated to $x$ defined as  in \eqref{e.empiricalmeasure}.
Consider the following notation for finite Birkhoff averages of $\varphi$,
\begin{equation}\label{e.birkhoffaverages}
\varphi_n(x) \eqdef
 \frac{1}{n}\, \sum_{i=0}^{n-1} \varphi (f^i(x)),
\end{equation}
and limit averages of $\varphi$
\begin{equation}
\label{e.notationBirkhoff}
\varphi_\infty (x) \eqdef
\lim_{n\to+\infty}\
\varphi_n (x)=
\lim_{n\to+\infty}\frac 1n\sum_{i=0}^{n-1}\varphi(f^i(x)),
\end{equation}
if such a limit exists.

Consider a measure $\mu$
that is a weak$\ast$ limit of empirical measures of $x$
 and  a subsequence  $\mu_{n_k}(x)$
with  $\mu_{n_k}(x)\to \mu$ in the weak$\ast$ topology.
The convergence of the sequence of Birkhoff averages $\int \varphi \, d\mu_{n_k}(x)$  to some limit 
$\alpha$ implies that $\int \varphi\, d\mu=\alpha$. But since 
$\mu$ may be non-ergodic this does not provide 
any information about the 
Birkhoff averages $\varphi_n(y)$
 of  $\mu$-generic points $y$. 
We aim for a criterion guaranteeing that  $\mu$-generic points have the same limit average as $x$.
Naively, in \cite{BoBoDi2} the way to get this property 
is to require that ``all large orbit intervals of the forward orbit of $x$ have average close to the 
limit average (say) $\alpha$".  
This was formalised in the
criterion  \emph{control of Birkhoff averages at any scale} of a point $x$ 
with respect to a map $\varphi$ in \cite{BoBoDi2}. This criterion implies 
that there are sequences of times $t_n\to\infty$ and of ``errors" $\varepsilon_n\to0$ such
that every  orbit interval with length $t\ge t_n$ of the forward orbit of $x$ 
has  $\varphi$-Birkhoff average in $[\alpha-\varepsilon_n, \alpha+\varepsilon_n]$.
When $\varphi$-Birkhoff averages are controlled at any scale
  then the
  $\varphi$-Birkhoff averages of any
    $\omega$-limit point  of $x$ 
  converge uniformly to $\alpha$ 
(see \cite[Lemma 2.2]{BoBoDi2}).

%
%

To get  a limit measure  whose support is the whole
ambient space the requirement 
``all long orbit intervals
satisfy the limit average property" is extremely restrictive. Roughly, in the criterion in this paper we 
only require that  ``most of large orbit intervals of the forward orbit of $x$ have average close to the 
limit average $\alpha$".  
Let us explain a little more precisely this rough idea.

If the limit measure has full support then the orbit of the point
$x$ must
necessarily visit ``all regions" of the ambient space and 
these visits require an arbitrary
large time. Moreover, 
 to get  limit measures 
whose generic points 
have dense orbits in the ambient space
 these ``long visits" must occur  with some frequency. 
 During these long visits the control of the averages can be lost.

To control simultaneously Birkhoff averages and support of the limit measure, one needs some
``balance" between the part 
of the orbit where there is a ``good control of 
the averages" and the part of the orbit used for spreading the support  of the measure to get its density
(roughly, these parts play the roles of the ``shadowing" and ``tail parts" of the method in \cite{GIKN}).
The criterion in this paper formalizes an abstract notion for this balance  that we call 
\emph{control at any scale with a long sparse tail with respect to $\varphi$ and $X$} 
(see Definitions~\ref{d.alphacontrol} and \ref{d.controledtail}). 
Our main technical result is that this criterion provides ergodic measures  having simultaneously a prescribed average and a prescribed support.
 
\begin{theo}\label{t.accumulation}
Let $(X,d)$ be a compact metric space, $f\colon X\to X$ a homeomorphism, and 
 $\varphi \colon X\to \RR$
a continuous map.
Consider
\begin{itemize}
\item
 a point $x_0\in X$
that  is controlled at any scale with a long sparse tail with respect $\varphi$ and $X$
 and 
 \item
 a measure $\mu$
that is a weak$\ast$ limit of the sequence of  empirical measures $(\mu_n(x_0))_n$ of $x_0$.
\end{itemize}
Then for $\mu$-almost every point 
$x$ 
the following holds:
\begin{enumerate}
\item 
the forward orbit of $x$ for $f$ is dense in $X$ and
\item
$\lim_{n\to\infty}  \frac{1}{n} \sum_{i=0}^{n-1} \varphi (f^i(x))=
\int \varphi \, d\mu$.
\end{enumerate}
In particular, these two assertions hold for almost every ergodic measure of the ergodic decomposition of
$\mu$.
\end{theo}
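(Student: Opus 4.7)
The two conclusions reflect the two components of the hypothesis: ``control at any scale'' drives the convergence of Birkhoff averages to $\alpha := \int \varphi\, d\mu$, while the ``long sparse tail'' drives density of orbits. I would handle them in parallel, then combine via Birkhoff's ergodic theorem and the ergodic decomposition at the end.

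\emph{Birkhoff averages.} Set $\alpha := \int \varphi\, d\mu$. The control-at-any-scale part of the hypothesis should give sequences $t_n \to \infty$ and $\varepsilon_n \to 0$ such that every sufficiently long subinterval (of length $\geq t_n$) of the forward orbit of $x_0$ lying outside the tail has its $\varphi$-Birkhoff average within $\varepsilon_n$ of $\alpha$, while the union of the tails occupies a vanishing fraction of the orbit up to time $n_k$. A bootstrap of the type of \cite[Lemma 2.2]{BoBoDi2} promotes this: if $y \in \omega(x_0)$, then the finite averages $\varphi_n(y)$ converge to $\alpha$ as well, because every long orbit arc of $y$ is shadowed by (controlled, non-tail) arcs of $x_0$. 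Since $\mu$ is a weak$\ast$ accumulation of empirical measures of $x_0$, its support sits inside $\omega(x_0)$, and therefore $\varphi_\infty(y) = \alpha$ holds on a set of full $\mu$-measure.

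\emph{Density of orbits.} The long sparse tail is designed so that each empirical measure $\mu_n(x_0)$ (for $n$ large) puts mass at least some $\delta(\varepsilon) > 0$ on every closed $\varepsilon$-ball in $X$: the ``long'' part of the tail is long enough to shadow a dense trajectory in $X$, while ``sparse'' ensures this does not wreck the average control. Passing to the weak$\ast$ limit via the Portmanteau theorem for closed sets yields $\supp(\mu) = X$. The subtler claim is that every ergodic component $\nu$ appearing in the decomposition of $\mu$ also has full support, equivalently that $\mu$-a.e.\ $y$ has dense forward orbit. Fix a countable basis of open sets $(U_j)$ of $X$. For each $j$, the uniform lower bound on the frequency with which the orbit of $x_0$ visits $U_j$ at arbitrarily large scales, together with the $f$-invariance of $\mu$ and the weak$\ast$ convergence $\mu_{n_k}(x_0) \to \mu$, must be transferred to a uniform positive lower bound on the visit frequency of $U_j$ for $\mu$-typical $y$; applying Birkhoff's ergodic theorem to $\mathbf{1}_{U_j}$ then forces $\nu(U_j) > 0$ almost surely in the ergodic decomposition, and intersecting over $j$ gives the density conclusion.

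\textbf{Main obstacle.} The hard step is precisely this transfer from full support of $\mu$ to full support of $\mu$-almost every ergodic component: a non-ergodic measure can easily charge every open set and yet decompose into components living on proper closed invariant subsets, so $\supp(\mu) = X$ by itself is not enough. Breaking this obstruction is what makes the ``long sparse tail'' a genuinely stronger hypothesis than mere fullness of the empirical limits, and it is where the balance built into the definition (long tails ensuring quantitative ubiquity, sparseness ensuring that the average control is preserved) must be exploited pointwise along the orbit of $x_0$ rather than only in the weak$\ast$ limit. Once this pointwise frequency bound is established, the final ``in particular'' clause follows directly from the ergodic decomposition theorem applied to the full-$\mu$-measure set on which both (1) and (2) hold.
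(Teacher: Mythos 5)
Your plan correctly separates the two conclusions and correctly identifies the transfer from the limit measure to its ergodic components as the crux, but both halves rely on mechanisms that fail under the actual hypothesis, and the idea that makes the paper's proof work is absent. For the averages: the route through the analogue of \cite[Lemma 2.2]{BoBoDi2} --- deducing $\varphi_n(y)\to\alpha$ for \emph{every} $y\in\omega(x_0)$ and then using $\supp\mu\subseteq\omega(x_0)$ --- is exactly what the ``long sparse tail'' destroys. The orbit of $x_0$ contains arbitrarily long blocks (the tail components, of lengths $T_n\to\infty$) on which only the average over the \emph{whole} block is controlled, while sub-blocks are unconstrained and the block is $\delta_n$-dense in $X$. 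Hence $\omega(x_0)$ is typically all of $X$ and contains points (e.g.\ periodic points with $\varphi$-averages of opposite signs) whose Birkhoff averages are not $\alpha$; your sentence ``every long orbit arc of $y$ is shadowed by controlled, non-tail arcs of $x_0$'' is false, and if it were true it would prove a statement stronger than the theorem and false in the applications. The paper instead proves a measure estimate: $q_t\eqdef\liminf_n\mu_n\big(\{x:\varphi_t(x)\in[-2\alpha_k,2\alpha_k]\}\big)\to1$ as $t\to\infty$, because a starting time $i$ at which the length-$t$ average is uncontrolled must lie near a tail component of length comparable to $t$, and such times have vanishing upper density; a Borel--Cantelli argument along a subsequence with $\sum(1-q_{t_j})<\infty$ then pins $\varphi_\infty(x)$ into $[-3\alpha_k,3\alpha_k]$ for $\mu$-a.e.\ $x$, for every $k$.

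For the density, you flag the obstruction accurately but do not resolve it: a positive lower bound on the visit frequency of the orbit of $x_0$ to a basis element $U_j$ only yields $\mu(\overline{U_j})\geq c$ in the weak$\ast$ limit, hence $\int\nu(\overline{U_j})\geq c$ over the ergodic decomposition, which forces a positive-measure set of components to charge $U_j$ --- not almost all of them. The paper does not use visit frequencies or Birkhoff's theorem here at all. It considers, for fixed $\delta_k$ and $t$, the set $X(t,\delta_k)$ of points $x$ whose finite orbit segment $\{x,\dots,f^t(x)\}$ is already $\delta_k$-dense in $X$, and proves $\liminf_n\mu_n(X(t,\delta_k))\to1$ as $t\to\infty$ by the same combinatorial analysis of the tail; Portmanteau then gives that $\mu$-almost every point lies in some $X(t,2\delta_k)$, a pointwise statement that trivially passes to every ergodic component. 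The single missing idea in both halves is the same one: encode the desired property of a long orbit block as a property of its \emph{starting point}, and show that the set of bad starting times along the orbit of $x_0$ has zero asymptotic density, using the quantitative sparseness and center-position conditions of the tail.
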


We now exhibit some dynamical configurations where the criterion  holds. 
Indeed,
we see that such configurations are quite ``frequent".

\subsection{Flip-flop families with sojourns: control at any scale with a long sparse tail}
\label{ss.flipflps}
To present a mechanism providing orbits controlled at any scale
we borrow
the following definition from \cite{BoBoDi2}:

\begin{defi}[Flip-flop family]\label{d.flipflop} 
Let $(X,d)$ be a compact metric space, $f\colon X \to X$ a homeomorphism, and $\varphi\colon X \to \mathbb{R}$
a continuous function.

A \emph{flip-flop family} associated to $\varphi$ and $f$ is a family 
$\fF=\fF^+\bigsqcup\fF^-$ of compact subsets of $X$ such that there are $\alpha>0$ and a 
sequence of numbers  $(\zeta_n)_n$, $\zeta_n>0$ and $\zeta_n\to 0$ as $n\to \infty$, 
such that:
\begin{enumerate}
 \item\label{i.flipflop1} for every $D\in\fF^+$ (resp. $D\in\fF^-$) and every $x\in D$ it holds $\varphi(x)\geq\alpha$ (resp. $\varphi(x)\leq -\alpha$);
 \item\label{i.flipflop2}  
 for every $D\in \fF$, there are sets $D^+\in \fF^+$ and $D^-\in\fF^-$ contained in $f(D)$;
 \item\label{i.flipflop33}   
 for every  $n>0 $ and  every family of sets $D_i\in \fF$, $i\in\{0,\dots ,n\}$  with $D_{i+1}\subset f(D_i)$
 it holds
  $$
 d(f^{n-i}(x),f^{n-i}(y))\leq\zeta_i\cdot d(f^n(x) f^n(y))
 $$
 for every $i\in\{0,\dots,n\}$ and every pair of points
 $x,y\in f^{-n}(D_n)$.
\end{enumerate}

We call {\emph{plaques}}\footnote{We pay special attention to the case when the sets of the flip-flop family
are discs tangent to a strong unstable cone field. This justifies this name.}
 the sets of the flip-flop family $\cF$.
\end{defi}

With the notation in Definition~\ref{d.flipflop},
\cite[Theorem 2.1]{BoBoDi2} claims  that for every number
$t\in (-\alpha,\alpha)$ and  every set  $D\in \fF$  
there is a point $x_t\in D$ 
whose orbit is controlled at any scale for the function 
$\varphi_t=\varphi-t$. Hence the Birkhoff average of $\varphi$ along the orbit of any point $y\in \omega(x_t)$ is $t$.
Furthermore, the $\omega$-limit set of $x_t$ has positive topological entropy. 

Since we  aim to obtain measures with full support
we need to  relax the control of the averages. For that 
we introduce a ``sojourn condition" 
for the returns of the sets of the flip-flop family
(item~\eqref{i.defff0} in the definition below).
These``sojourns" will   be used to get
dense orbits and to spread the support of the measures
and play a role similar to the ``tails" in \cite{GIKN}. 


\begin{defi}[Flip-flop family with sojourns]\label{d.flipfloptail} 
Let $(X,d)$ be a compact metric space,
$Y$ a compact subset of $X$, $f\colon X \to X$ a homeomorphism, and $\varphi\colon X \to \mathbb{R}$
a continuous function.

Consider a  flip-flop  family
 $\fF=\fF^+\bigsqcup\fF^-$
  associated to $\varphi$ and $f$.  
We say that the \emph{flip-flop family $\fF$ has sojourns along $Y$} 
(or that  \emph{$\fF$ sojourns along $Y$}) if
for every $\delta>0$ there is an integer $N=N_\delta$ such that 
every plaque $D\in\fF$ contains subsets $\widehat D^+, \widehat D^-$ such that: 
\begin{enumerate}
 \item\label{i.defff0}  
 for every $x\in \widehat D^+\cup \widehat D^-$ the orbit segment $\{x,\dots, f^N(x)\}$ is $\delta$-dense in $Y$ 
 (i.e., the $\delta$-neighbourhood of the orbit segment contains $Y$); 
 \item\label{i.defff1} 
 $f^N(\widehat D^+)\in \fF^+$ and $f^N(\widehat D^-)\in\fF^-$;
 \item\label{i.defff2}   
 for every $i\in\{0,\dots, N\}$ and  every pair of points $x,y\in \widehat D^+$  or $x, y\in \widehat D^-$  
 it holds 
 $$ 
 d(f^{N-i}(x),f^{N-i}(y))\leq\zeta_i\cdot d(f^N(x) f^N(y)), 
 $$
 where $(\zeta_i)_i$ is a sequence  as in Definition~\ref{d.flipflop}.
\end{enumerate}
\end{defi}

The conditions in Definition~\ref{d.flipfloptail} are depicted in Figure~\ref{f.ffs}.
\begin{figure}
\begin{minipage}[h]{\linewidth}
\centering
 \begin{overpic}[scale=.60, 
 ]{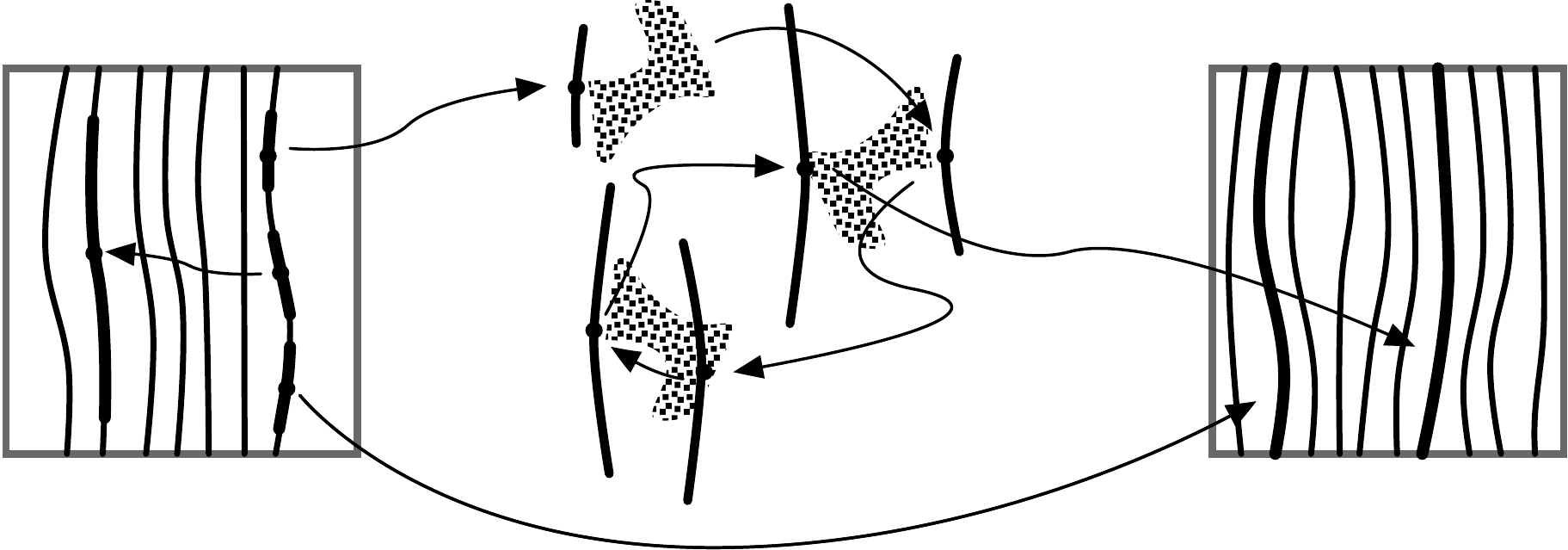}
			\put(9,33){\small$\fF^+$}
					\put(88,33){\small$\fF^-$}
			\put(18,31){\small$D$}
\put(18,17){\small$D^+$}
\put(20,9){\small$D^-$}
\put(18.5,23.5){\small$\widehat D^-$}
\put(46,28){\small$Y$}
  \end{overpic}
\caption{A flip-flop family with sojourns: the plaques $D^+$, $D^-$, and $\widehat D^-$
(the plaque 
$\widehat D^+$ is omitted for visual simplicity).
}
\label{f.ffs}
\end{minipage}
\end{figure}	

Next theorem corresponds to \cite[Theorem 2.1]{BoBoDi2} in our setting:

\begin{theo}\label{t.flipfloptail} 
Let $(X,d)$ be a compact metric space,
$Y$ a compact subset of $X$, $f\colon X \to X$ a homeomorphism, and $\varphi\colon X \to \mathbb{R}$
a continuous function.
Consider  a flip-flop family $\fF$  associated to $\varphi$ and $f$  having
sojourns along $Y$. 

Then every plaque $D\in \fF$ contains a point $x_D\in D$ that is 
controlled at any scale 
with a long sparse tail 
with respect to $\varphi$ and $Y$.
\end{theo}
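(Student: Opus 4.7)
The plan is to adapt the proof of \cite[Theorem 2.1]{BoBoDi2} by interleaving the flip-flop \emph{balance blocks} used there with \emph{sojourn blocks} coming from Definition~\ref{d.flipfloptail}. Specifically, the point $x_D$ will be the unique element of a nested intersection $\bigcap_{k \ge 0} f^{-s_k}(D_k)$ with $D_0 = D$, $s_0 = 0$, each $D_k \in \fF$, and $s_k$ the time at which the orbit of $x_D$ visits $D_k$. Between $s_k$ and $s_{k+1}$ the orbit first spends a long balance block of length $T_k$ inside a successor chain of plaques chosen greedily so that the partial $\varphi$-sum stays bounded near zero (the target $0 \in (-\alpha,\alpha)$ being admissible), and then a short sojourn block of length $N_{\delta_k}$ whose iterates are $\delta_k$-dense in $Y$.

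Fix a decreasing sequence $\delta_k \downarrow 0$, set $N_k := N_{\delta_k}$ as in Definition~\ref{d.flipfloptail}, and fix a rapidly growing sequence of balance lengths $T_k$ whose precise growth is tuned at the end. Inductively, given $D_k \in \fF$ at time $s_k$, carry out the balance phase exactly as in \cite{BoBoDi2}: at each of $T_k$ consecutive steps, apply item~\ref{i.flipflop2} of Definition~\ref{d.flipflop} and pass to the $\fF^+$ or $\fF^-$ successor according to whether the running partial sum of $\varphi$ along the nested orbit is negative or positive. Since every step moves that sum by at least $\alpha$ in the chosen direction and by at most $\|\varphi\|_\infty$ in magnitude, after an initial transient of $O(|S_k^*|/\alpha)$ steps (where $S_k^*$ is the accumulated sum at the start of block $k$) the partial sum remains in an interval of radius $\|\varphi\|_\infty$ for the rest of the block. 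The balance phase ends with a plaque $D_k' \in \fF$ at time $s_k + T_k$. The subsequent sojourn phase applies Definition~\ref{d.flipfloptail} to $D_k'$ at scale $\delta_k$ to obtain $\widehat D_k \subset D_k'$ (either $\widehat D_k^+$ or $\widehat D_k^-$), and sets $D_{k+1} := f^{N_k}(\widehat D_k) \in \fF$ and $s_{k+1} := s_k + T_k + N_k$. The uniform contraction estimates in item~\ref{i.flipflop33} of Definition~\ref{d.flipflop} and item~\ref{i.defff2} of Definition~\ref{d.flipfloptail} force $\bigcap_k f^{-s_k}(D_k)$ to be a single point $x_D$, which necessarily lies in $D_0 = D$.

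Density of the forward orbit of $x_D$ in $Y$ is immediate from the construction: during the $k$-th sojourn block the orbit segment is $\delta_k$-dense in $Y$, and $\delta_k \to 0$. For the Birkhoff averages, any time $n$ decomposes into completed balance/sojourn blocks plus a tail inside the current block; the balance portions contribute $O(\|\varphi\|_\infty)$ per block to the partial $\varphi$-sum, while each sojourn contributes at most $\|\varphi\|_\infty N_k$, and both quantities become negligible relative to $n \ge s_k \ge T_1 + \cdots + T_{k-1}$ once $T_k$ grows geometrically with respect to both $N_k$ and $N_1 + \cdots + N_{k-1}$. The main obstacle, and the reason the argument is not a mere concatenation of \cite{BoBoDi2} with the sojourn data, is the quantitative tuning needed to match the precise notion of Definition~\ref{d.controledtail}: the uncontrolled sojourn intervals must form a long sparse tail \emph{at every intermediate scale}, not merely in the limit. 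This requires a growth condition of the type $T_k \ge 2^k (N_1 + \cdots + N_k)$, guaranteeing uniformly in the current time $n$ that the proportion of past iterates spent in sojourns is small and decays with $k$; the verification is a combinatorial bookkeeping parallel to \cite[\S2]{BoBoDi2}.
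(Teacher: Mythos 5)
Your overall architecture (nested plaques, expansion forcing the intersection to a single point $x_D\in D$, alternating ``balance'' and ``sojourn'' phases) is the right one, but the construction as described does not produce the object the theorem asserts, namely a point satisfying Definitions~\ref{d.tail}--\ref{d.controledtail}. Two genuine gaps. First, the tail structure: a long sparse tail is \emph{hierarchical}, not linear. Condition (c) of Definition~\ref{d.tail} requires $0<\#(R_{n-1}\cap I)$ for \emph{every} good $T_n$-window $I$ and \emph{every} $n\ge 1$, i.e.\ components of size $T_{n-1}$ must occur in the middle third of every good window at every scale. Your scheme inserts one sojourn of length $N_k$ (not even a length from the scale, nor aligned at a multiple of it) per balance block, so the set of sojourn times is a single sparse sequence and cannot satisfy the self-similar ``$0<\cdots<\varepsilon_n$'' condition at all scales simultaneously. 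This is not bookkeeping to be tuned at the end: it forces the construction to be organized by induction on the scale, which is exactly what the paper's Theorem~\ref{t.p.flipfloppattern}, Lemma~\ref{l.tailexistence} and Addendum~\ref{p.extension} do (build the combinatorial tail and its patterns first, then realize each pattern by nested plaques, extending initial subpatterns coherently as $n$ grows).

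Second, the average control. Definition~\ref{d.alphacontrol} demands $\varphi_I(x)\in[-\alpha_n,\alpha_n]$ for every aligned $T_n$-window $I$ that is \emph{not} inside a larger tail component --- including the tail components of size $T_n$ themselves. Your sojourn blocks are left uncontrolled in average and merely declared negligible relative to $n$; worse, after a sojourn your greedy rule enters a recovery transient of length $\sim\|\varphi\|_\infty N_k/\alpha$ during which every iterate lies in plaques of a fixed sign, so every aligned $T_m$-window contained in that transient has $|\varphi_I|\ge\alpha>\alpha_m$ for large $m$. Since $N_k\to\infty$, these transients eventually contain many full windows at every fixed scale, and they are not exempted by your tail. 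The paper resolves this by making each sojourn-plus-balancing excursion a \emph{single} tail component of exact length $T_{n+1}$ whose own average lands in $\omega[\tfrac{\alpha_{n+1}}{2},\alpha_{n+1}]$ (Proposition~\ref{p.l.induction1}), so that the bad sub-windows are shielded by the clause $I\not\subset R_{n+1,\infty}$; the concatenation step (Proposition~\ref{p.l.induction2}, together with the distortion Lemma~\ref{l.distortion} and the uniformization Lemma~\ref{l.necessario}) then propagates the control to the next scale. Your proposal would need to be reorganized along these lines before the appeal to Theorem~\ref{t.accumulation} is legitimate.
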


As a corollary of Theorems~\ref{t.accumulation} and \ref{t.flipfloptail}  we get
(recall the notation for Birkhoff limits in \eqref{e.notationBirkhoff}):

\begin{mcor}\label{c.flipfloptail} 
Under the hypotheses of  Theorem~\ref{t.flipfloptail} and with the same notation,  
any measure $\mu$ that is 
 a weak$\ast$ limit of the empirical measures
$(\mu_n(x_D))_n$
satisfies the following properties:
\begin{itemize}
 \item 
 the orbit of $\mu$-almost every point is dense in $Y$ and
 \item 
 for $\mu$-almost every point $x$ it holds
 $\varphi_\infty (x)=0$.  
\end{itemize}
As a consequence, almost every  measure $\nu$ in the ergodic decomposition of $\mu$ has full support 
in $Y$ and satisfies $\int\varphi\, d\nu=0$.
\end{mcor}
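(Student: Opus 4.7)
The plan is to combine the two preceding theorems and then extract the ergodic-decomposition statement by a standard argument. For the given plaque $D\in\fF$, Theorem~\ref{t.flipfloptail} furnishes a point $x_D\in D$ that is controlled at any scale with a long sparse tail with respect to $\varphi$ and $Y$. Given any weak$\ast$ accumulation measure $\mu$ of the empirical sequence $(\mu_n(x_D))_n$, Theorem~\ref{t.accumulation}---applied in its natural semi-local form, with $Y$ in the role of the ambient compact set in which density is demanded---says that for $\mu$-almost every $x$ the forward orbit is dense in $Y$ and $\varphi_\infty(x)=\int\varphi\,d\mu$.

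The one remaining point is to verify $\int\varphi\,d\mu=0$. Since
\[
\int\varphi\,d\mu_n(x_D)=\frac{1}{n}\sum_{i=0}^{n-1}\varphi(f^i(x_D))=\varphi_n(x_D),
\]
and $\varphi$ is continuous on the compact space $X$, weak$\ast$ convergence along a subsequence $n_k$ realising $\mu$ gives $\int\varphi\,d\mu=\lim_k\varphi_{n_k}(x_D)$. So the claim reduces to $\varphi_{n_k}(x_D)\to 0$, which is built into the definition of being controlled at any scale with a long sparse tail in this setting: the target value here is $0$, singled out by the $\pm\alpha$ sign structure of $\fF=\fF^+\sqcup\fF^-$ in Definition~\ref{d.flipflop}, so the ``control'' part of the hypothesis forces the finite Birkhoff averages along the orbit of $x_D$, even in the presence of the long sparse tails, to converge to $0$.

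The ergodic-decomposition statement follows automatically. The set of points satisfying both conclusions of Theorem~\ref{t.accumulation} has full $\mu$-measure, hence full $\nu$-measure for $P$-almost every component $\nu$ in the decomposition $\mu=\int\nu\,dP(\nu)$. For such a $\nu$, ergodicity together with conclusion~(2) gives $\int\varphi\,d\nu=0$; conclusion~(1) applied to a single $\nu$-generic point gives $\supp(\nu)\supseteq Y$, while $\supp(\nu)\subseteq\supp(\mu)\subseteq Y$ closes the argument.

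Expected main obstacle: the only genuinely non-bookkeeping issue is the passage between the ambient compact space $X$ of Theorem~\ref{t.accumulation} and the distinguished subset $Y\subseteq X$ that plays the role of target-of-density here, together with the associated claim $\supp(\mu)\subseteq Y$. I would dispose of both at once by observing that the sojourn condition in Definition~\ref{d.flipfloptail} forces the forward orbit of $x_D$ (and therefore any weak$\ast$ limit of its empirical measures) to be concentrated on $Y$, so that Theorem~\ref{t.accumulation} can be applied effectively inside $Y$.
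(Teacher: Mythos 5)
Your overall route is the same as the paper's: Theorem~\ref{t.flipfloptail} produces the controlled point $x_D$, Theorem~\ref{t.accumulation} (read with $Y$ as the target of density, which is indeed how the paper uses it) gives density and constant Birkhoff averages $\mu$-a.e., and the ergodic-decomposition statement is the standard consequence. Two points deserve correction, though neither is fatal.

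First, the identification of the common average with $0$ is not ``built into the definition'' as immediately as you suggest. The control hypothesis bounds $\varphi_I(x_D)$ by $\alpha_n$ only over intervals $I=[kT_n,(k+1)T_n-1]_{\NN}$ that are $n$-good or components of $R_n$; for an arbitrary length $n_k$ one must decompose $[0,n_k-1]$ into controlled blocks plus a boundary remainder of vanishing proportion. This is exactly the content of Proposition~\ref{p.l.alpha} (via Claims~\ref{c.JI} and~\ref{c.filldensity}), which shows $\varphi_\infty(x)\in[-3\alpha_k,3\alpha_k]$ for every $k$ and $\mu$-a.e.\ $x$, whence $\varphi_\infty\equiv 0$ a.e.\ and $\int\varphi\,d\mu=\int\varphi_\infty\,d\mu=0$. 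Either cite that proposition or carry out the block decomposition for $x_D$ itself; as written, the step is asserted rather than proved.

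Second, your closing claim that the sojourn condition forces the forward orbit of $x_D$ to be ``concentrated on $Y$,'' hence $\supp(\mu)\subseteq Y$, reverses the meaning of $\delta$-density: an orbit segment being $\delta$-dense in $Y$ means $Y$ lies in the $\delta$-neighbourhood of the segment, not that the segment lies near $Y$, so nothing prevents the orbit from leaving $Y$ and $\supp(\mu)\subseteq Y$ does not follow in the abstract setting. Fortunately you do not need it: ``full support in $Y$'' here is the inclusion $\supp(\nu)\supseteq Y$, which follows because a $\nu$-generic point lies in the closed invariant set $\supp(\nu)$ and has orbit dense in $Y$. (In the paper's dynamical applications the reverse inclusion is obtained separately, from invariance of the ambient class, not from the sojourn condition.)
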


We  now explore some consequences of the results above.

\subsection{Birkhoff averages  in homoclinic classes}\label{ss.medias}
An important property  of our methods is that they 
can be used  in nonhyperbolic and non-Markovian settings.  
We now present two
applications of our criteria 
in the ``hyperbolic" setting of a  mixing sub-shift of finite type that are, as far as we are aware, unknown.
The key point of Proposition~\ref{p.r.hyperboliclike} is that it only requires continuity of the
potential $\varphi$.
When the potential is H\"older continuous this sort of result is well-known\footnote{For instance, techniques from multifractal analysis provide the following:
Given  a H\"older continuous function $\varphi$, there is a parametrised family of Gibbs states $\mu_t$, $t\in(\alpha,\beta)$, where $\alpha,\beta$ are as above, such that  $\int \varphi \,d\mu_t=t$. Each
 $\mu_t$ has full support  and positive entropy. The conclusion in this statement 
 is stronger than the than the one in b) as it guarantees also positive entropy.
For a survey of this topic see for instance \cite{PW}.}.

\begin{mprop}\label{p.r.hyperboliclike}
Let $\sigma\colon \Si\to\Si$ be a mixing sub-shift of finite type and  $\varphi\colon\Si\to\RR$
 a continuous function.
Let $\alpha$ and $\beta$ be the infimum and maximum, respectively, of $\int \varphi\, d\mu$ over the set of
$\sigma$-invariant probability measures $\mu$ (or equivalently 
of the Birkhoff averages along periodic orbits).
Then   for every $t\in(\alpha,\beta)$ the following holds:
\begin{enumerate}
\item {\em (Application of the criterion in \cite{BoBoDi2})}
There is a $\sigma$-invariant compact set $K_t$ 
with positive topological entropy
such that   
 the Birkhoff average of $\varphi$ along the orbit of any point in $K_t$ is $t$. 
\item {\em (Application of  the new criterion)}
There is an ergodic measure $\mu_t$ with full  support in  $\Si$ such that  $\int \varphi\, d\mu_t=t$. 
\end{enumerate}
\end{mprop}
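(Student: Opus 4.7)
The plan is, for each $t\in(\alpha,\beta)$, to build a flip-flop family (with sojourns along $\Sigma$ for part~(b)) associated to a continuous function whose integrals against invariant measures coincide with those of $\varphi-t$, and then to invoke \cite[Theorem~2.1]{BoBoDi2} for part~(a) and Theorem~\ref{t.flipfloptail} together with Corollary~\ref{c.flipfloptail} for part~(b).

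First I would pick periodic points $p^+,p^-\in\Sigma$ of periods $n^+,n^-$ whose $\varphi$-averages $\bar\varphi^\pm$ satisfy $\bar\varphi^+>t>\bar\varphi^-$. This is possible because, for a mixing sub-shift of finite type, periodic measures are weak$\ast$-dense in the space of invariant measures, so the set of Birkhoff averages of periodic orbits is dense in $[\alpha,\beta]$. Put $\alpha_0\eqdef\tfrac13\min(\bar\varphi^+-t,\,t-\bar\varphi^-)>0$. Since the flip-flop definition requires a \emph{pointwise} bound on the potential while periodic orbits only provide control on averages, I would replace $\varphi$ by the Birkhoff smoothing $\psi\eqdef\frac1L\sum_{i=0}^{L-1}\varphi\circ\sigma^i-t$, with $L$ a common multiple of $n^+$ and $n^-$ chosen so large that, by uniform continuity of $\varphi$, on any cylinder whose first $2L$ coordinates form a periodic block of $p^+$ (resp.\ of $p^-$) one has $\psi\geq 2\alpha_0$ (resp.\ $\psi\leq -2\alpha_0$) pointwise. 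This averaging is harmless because $\int\psi\,d\mu=\int\varphi\,d\mu-t$ for every invariant $\mu$ and the infinite Birkhoff averages of $\psi$ and $\varphi-t$ coincide wherever they exist, so every conclusion for $\psi$ transfers to $\varphi$.

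For the flip-flop family itself, let $K$ be the mixing constant of $\Sigma$, so that any two admissible words can be joined by some connecting word of length $\leq K$. I would declare a cylinder to be a plaque in $\fF^+$ when it starts with a periodic block of $p^+$ of some fixed length $M\geq 2L$ followed by an admissible tail of some fixed length $\ell\geq K$; plaques in $\fF^-$ are defined analogously with $p^-$. The pointwise-sign condition of Definition~\ref{d.flipflop} then holds by the choice of $L$, while the contraction condition is automatic because the shift metric contracts points of a common cylinder at a uniform geometric rate under backward iteration. The key step---and the main technical obstacle---is the condition that $\sigma(D)$ contains one plaque of each sign: given $D\in\fF^+$, the image $\sigma(D)$ still carries a long initial $p^+$-pattern, so a $\fF^+$-sub-plaque is obtained by extending the periodic block one more step along $p^+$; to fit a $\fF^-$-sub-plaque inside $\sigma(D)$ I would use the flexibility in the tail of length $\geq K$ to insert a mixing connection word followed by a long $p^-$-block, which is exactly where the mixing hypothesis on $\Sigma$ (rather than mere transitivity) is essential.

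Applying \cite[Theorem~2.1]{BoBoDi2} to this family then yields a point $x\in\Sigma$ controlled at any scale for $\psi$, and hence with $\varphi$-Birkhoff average equal to $t$ along every orbit in $K_t\eqdef\omega(x)$; positive topological entropy of $K_t$ follows from the standard coding argument in \cite{BoBoDi2}, since at each plaque the independent choice of a $\fF^+$- or $\fF^-$-continuation yields a continuous surjection of $K_t$ onto a full two-shift. This settles part~(a). For part~(b) I would strengthen the family to a flip-flop family with sojourns along $Y=\Sigma$ in the sense of Definition~\ref{d.flipfloptail}: given $\delta>0$, fix a finite cover of $\Sigma$ by cylinders of diameter $<\delta$ and, using mixing once again, realize inside each plaque $D$ two sub-cylinders $\widehat D^\pm$ whose first $N_\delta$ coordinates spell an admissible word that visits every cylinder of the cover in turn and then ends with a long $p^+$- or $p^-$-block, so that $\sigma^{N_\delta}(\widehat D^\pm)\in\fF^\pm$. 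The contraction condition is again automatic, and Theorem~\ref{t.flipfloptail} together with Corollary~\ref{c.flipfloptail} then furnish an ergodic measure $\mu_t$ with full support in $\Sigma$ and $\int\varphi\,d\mu_t=t$, as required.
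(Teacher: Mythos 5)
There is a genuine gap at the step you yourself flag as the main technical obstacle: the strict invariance condition (item (b) of Definition~\ref{d.flipflop}) cannot hold for a single iterate of $\sigma$ with plaques of the form you propose. If $D$ is the cylinder $[w_0\ldots w_{M-1}u_0\ldots u_{\ell-1}]$ with $w$ a $p^+$-block, then every point of $\sigma(D)$ has first symbol $w_1$ and, more to the point, its coordinates $0,\dots,M+\ell-2$ are completely pinned down to be $w_1\ldots w_{M-1}u_0\ldots u_{\ell-1}$; only the symbol in position $M+\ell-1$ is free. So you cannot ``extend the periodic block one more step along $p^+$'' (the symbol following the $p^+$-block is already forced to be $u_0$, which need not continue the periodic pattern), and there is no ``flexibility in the tail'' to insert a connecting word to a $p^-$-block: $\sigma(D)$ contains no sub-cylinder beginning with a $p^-$-block at all. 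The repair is exactly what the paper does: work with a high power $\sigma^m$ (and the Birkhoff-summed potential) so that $\sigma^m(D)$, restricted to forward coordinates, is unconstrained and, by mixing, contains plaques of both signs; the large $m$ also delivers the uniform expansion. This entails an extra step you omit: the criteria then produce a $\sigma^m$-ergodic measure with the prescribed $\sigma^m$-average and $\sigma^m$-dense generic orbits, and one must average the pushforwards $\frac1m\sum_{i<m}\sigma^i_*\mu$ to obtain the $\sigma$-ergodic measure of the statement (cf.\ the end of the proof of Theorem~\ref{t.p.tail}).

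Two secondary points. First, since $\sigma$ is a homeomorphism the shift is two-sided, and full cylinders do not satisfy the backward-contraction condition (item (c) of Definition~\ref{d.flipflop}): two points of a common cylinder may differ at negative coordinates, and then $d(x,y)$ is not small compared with $d(\sigma^n x,\sigma^n y)$. The plaques must be unstable slices of cylinders (fix the past), as in the paper's proof; the same issue affects your pointwise estimate $\psi\ge 2\alpha_0$, which requires controlling $\varphi$ on a whole cylinder, not just on forward coordinates. Second, your periodic-orbit-plus-Birkhoff-smoothing device for arranging the pointwise sign condition is a legitimate alternative to the paper's more direct observation that, since $\alpha<\alpha_t<t$, there must be points where $\varphi$ itself is $<\alpha_t$ (otherwise every invariant measure would integrate $\varphi$ to at least $\alpha_t$), so small cylinders with the required pointwise signs exist without any smoothing. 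That part of your argument is not where the problem lies.
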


This  proposition deals with systems satisfying
 \emph{specification properties}. An important property of our two criteria is 
that they do not involve and do not depend on specification-like properties.
Indeed,
they are introduced to control  averages of functions in
partially hyperbolic settings where specification fails. 
We now present an application of our criterion in settings  without specification properties. 

In what follows let  $M$ be a boundaryless compact Riemannian manifold and
$\Diff^1(M)$ the space of $C^1$-diffeomorphisms endowed with the standard uniform topology.
The {\emph{homoclinic class}} of  a hyperbolic periodic point $q$ of a diffeomorphism $f\in\Diff^1(M)$, denoted by $H(q,f)$, 
is the closure of the set of  transverse intersection points of the stable and unstable 
manifolds of the  orbit of $q$. 
Two hyperbolic periodic  points $p$ and $q$ of $f$ are \emph{homoclinically related} if the stable and unstable manifolds
of their orbits  intersect cyclically and transversely.
The homoclinic class of $q$ can  also be defined as the closure of the periodic points of $f$ that 
are homoclinically related to $q$. 
A homoclinic class is a \emph{transitive set} (existence of a dense orbit) 
whose periodic points  form a dense subset of it. 
Homoclinic classes are in many cases  the ``elementary pieces of the dynamics" of a diffeomorphism and are  used to structure  its dynamics, playing a similar role of the basic sets of the hyperbolic theory (indeed each basic set is a homoclinic class), for a discussion see the survey in \cite{B}.

The {\emph{$\mru$-index}} of a hyperbolic periodic point is the dimension of its unstable bundle. 
We analogously define  {\emph{$\mrs$-index}}. Two saddles which are homoclinically related have necessarily the
same $\mru$- and $\mrs$-indices. However two saddles with \emph{different indices}
(it is not necessary to specify the index type) may be in the same homoclinic class. In such a case the class is necessarily nonhyperbolic. Indeed,  the property of a homoclinic class containing   saddles of different indices is a typical feature in the nonhyperbolic dynamics studied in this paper (see also \cite{Sh,Mda,BD-robtran}).

The next result is a generalisation of the second part of Proposition~\ref{p.r.hyperboliclike} 
to a non-necessarily hyperbolic context, observe that we do not require hyperbolicity of the
homoclinic class.
Recall that if $p$ is a periodic point of $f$ we denote by
$\mu_{\cO(p)}$ the $f$-invariant probability supported on the orbit of $p$.

\begin{theo}\label{t.homoclinic} 
Let $f\colon M\to M$ be a $C^1$-diffeomorphism defined on a boun\-dary\-less compact manifold 
and $\varphi\colon M\to \RR$ 
a continuous function. Consider a pair of hyperbolic periodic points $p$ and $q$ of $f$ 
that are homoclinically related and satisfy
$$
a_p\eqdef\int \varphi\, d\mu_{\cO(p)}<\int \varphi \,d\mu_{\cO(q)} \eqdef a_q.
$$
Then for every $t\in(a_p,a_q)$ there is an ergodic measure $\mu_t$ whose support is the whole homoclinic class $H(p,f)=H(q,f)$  and satisfies $\int\varphi\, d\mu_t=t.$
\end{theo}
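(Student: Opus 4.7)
The plan is to apply Corollary~\ref{c.flipfloptail} to the new system $g \eqdef f^N$ acting on $X = M$ along the compact set $Y \eqdef H(p,f) = H(q,f)$, with the continuous potential $\Psi \eqdef \sum_{i=0}^{N-1}\bigl(\varphi - t\bigr)\circ f^i$. The goal is to build a flip-flop family $\fF$ for $(g,\Psi)$ that sojourns along $Y$; the corollary then delivers a $g$-ergodic probability $\nu$ with $\supp(\nu) = Y$ and $\int \Psi\, d\nu = 0$. Letting $k$ be the smallest positive integer (necessarily dividing $N$) with $f^k_*\nu = \nu$, set $\mu_t \eqdef \frac{1}{k}\sum_{i=0}^{k-1} f^i_*\nu$. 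A standard "periodic-decomposition" argument (if $A$ is $f$-invariant then $\mu_t(A)=\nu(A)\in\{0,1\}$ by $g$-ergodicity of $\nu$) gives that $\mu_t$ is $f$-ergodic; since $Y$ is $f$-invariant, $\supp(\mu_t) = \bigcup_{i=0}^{k-1} f^i(Y) = Y$; and the $k$-periodicity of $i\mapsto\int \varphi\circ f^i\, d\nu$ yields $\int \varphi\, d\mu_t = t + \frac{1}{N}\int \Psi\, d\nu = t$, as required.

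To construct the flip-flop family, use that $p$ and $q$, being homoclinically related, lie in a common transitive hyperbolic basic set $\mathcal{H}_0$ that, after replacing $f$ by a suitable iterate, is conjugate to a full $2$-shift with Markov rectangles $R^-\ni p$ and $R^+\ni q$. Take $N$ large (a multiple of the periods of $p$ and $q$ compatible with this conjugacy) and $R^\pm$ small. By continuity of $\varphi$ and the strict inequalities $a_p - t < 0 < a_q - t$, there exists $\alpha>0$ with $\Psi \geq \alpha$ on $R^+$ and $\Psi \leq -\alpha$ on $R^-$. Define $\fF^\pm$ to be the family of local unstable plaques of $\mathcal{H}_0$ contained in $R^\pm$. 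Items (1)--(3) of Definition~\ref{d.flipflop} then follow respectively from the sign control on $R^\pm$, from the horseshoe action of $g$ on $R^-\cup R^+$ (which sends any plaque across both rectangles), and from uniform hyperbolicity of $\mathcal{H}_0$ (which gives the backward contraction along unstable plaques).

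For the sojourns, given $\delta>0$ pick a $\delta$-net $\{y_1,\ldots,y_m\}$ in $Y$ and, using density of periodic points homoclinically related to $p$ inside $Y$, periodic orbits $\gamma_i$ within $\delta$ of $y_i$ that are pairwise homoclinically related to each other and to $p,q$. Standard uniformly hyperbolic shadowing provides a transitive basic set $\Lambda_\delta \supset \mathcal{H}_0\cup\gamma_1\cup\cdots\cup\gamma_m$ that is $\delta$-dense in $Y$ and admits a Markov partition refining that of $\mathcal{H}_0$. Transitivity and the Markov property of $g|_{\Lambda_\delta}$ produce, in a bounded number $N_\delta$ of $g$-steps, symbolic itineraries starting in $R^\pm$, traversing a Markov rectangle close to every $\gamma_i$, and terminating in $R^+$ or $R^-$ as prescribed. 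The sub-plaques $\widehat D^\pm \subset D$ realising these itineraries satisfy items (1)--(3) of Definition~\ref{d.flipfloptail}, finishing the construction.

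The main technical obstacle is this last step: one must arrange a single return time $N_\delta$ that works for every plaque $D\in\fF$ simultaneously, the sub-plaques $\widehat D^\pm$ must remain unstable-type so that the backward-contraction estimate survives, and the subsampled $g$-orbits (not only the full $f$-orbits) must $\delta$-cover $Y$. Compatibility of the "coarse" horseshoe $\mathcal{H}_0$, which controls averages, with the arbitrarily fine $\delta$-dense horseshoes $\Lambda_\delta$, which spread the support, is ensured by the density of transverse heteroclinic connections among saddles homoclinically related to $p$, a structural property of the homoclinic class $H(p,f)$ available at every scale.
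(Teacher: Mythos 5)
Your strategy is essentially the paper's: reduce to $t=0$ by replacing $\varphi$ with $\varphi-t$, build from the homoclinic relation a flip-flop family of unstable discs near $p$ and $q$ that sojourns along $H(p,f)$, apply the abstract criterion (Theorem~\ref{t.flipfloptail} and Corollary~\ref{c.flipfloptail}), and average over the $f$-orbit to pass from $f^N$ back to $f$. The implementation differs only in inessential ways: where you use a common horseshoe with Markov rectangles and symbolic itineraries through a large basic set $\Lambda_\delta$ containing a $\delta$-net of periodic orbits, the paper works with $\fd$-neighbourhoods $\cV^\fd_\varrho$ of the local unstable manifolds of $p$ and $q$ and routes every disc, via the $\lambda$-lemma, through a single periodic orbit $r_\delta$ homoclinically related to $p$ whose orbit is already $\delta$-dense in the class (Remark~\ref{r.existenceofperiodicorbits}, Proposition~\ref{p.flipflophomoclinic}); this avoids Markov partitions altogether, and the compatibility question you single out between the coarse horseshoe $\mathcal{H}_0$ and the fine sets $\Lambda_\delta$ then never arises.

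Two points in your write-up need repair, both fixable. First, defining $\fF^\pm$ as unstable plaques \emph{of $\mathcal{H}_0$} is too narrow: the plaques produced by the sojourn excursion through $\Lambda_\delta$ are unstable discs of $\Lambda_\delta$, not of $\mathcal{H}_0$, and since the family $\fF$ must be fixed once and for all while sojourns are required for every $\delta$, you should take $\fF^\pm$ to be all unstable-type discs crossing $R^\pm$ (tangent to the unstable cone field), which is exactly what the paper's $\cV^\fd_\varrho$-definition accomplishes. Second, Corollary~\ref{c.flipfloptail} only gives that $\nu$-generic orbits are dense in $Y$, i.e.\ $\supp\nu\supset Y$; to get $\supp\nu\subset Y=H(p,f)$, and hence equality, you must check that the controlled point itself lies in $H(p,f)$. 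This is precisely the Claim in the paper's proof: each plaque $D_a$ contains a point of the compact invariant class, and the backward images $f^{-a}(D_a)$ shrink to the controlled point by the expansion property. Your plaques do contain points of $H(p,f)$ (they pass through points of basic sets inside the class), so the same one-line argument closes the gap, but it has to be said.
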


Note that the hypotheses in the theorem are $C^1$-open.
Observe 
that the difficulty in  the theorem is to get simultaneously the three properties 
\emph{ergodicity}, \emph{prescribed average}, and \emph{full support}. 
It is easier (and also known) to build 
measures satisfying simultaneously only two of these properties.

We also aim to apply the criterion in Theorem~\ref{t.accumulation} to saddles $p$ and $q$ 
that have different indices
and are in the same homoclinic class 
(or, more generally,  chain recurrence class) and thus the saddles are not homoclinically related. 

Before stating the next corollary let us recall the definition of a chain recurrence class.
Given $\epsilon>0$, a finite sequence of points $(x_i)_{i=0}^n$ is an
{\emph{$\epsilon$-pseudo-orbit}} of a diffeomorphism 
$f$ if
$d(f(x_i),x_{i+1})<\epsilon$ for every $i=0,\dots,n-1$ (here $d$ denotes the distance in $M$). A
point $x$ is {\emph{chain recurrent}} for $f$ if
 for every $\epsilon>0$ there is an $\epsilon$-pseudo-orbit
 $(x_i)_{i=0}^n$  with $x_0=x=x_n$. 
The {\emph{chain recurrent set}}  of $f$, denoted by $\cR(f)$, is the union of the chain recurrent points of $f$. 
The \emph{chain recurrence class} 
$C(x,f)$ of a point  $x\in \cR(f)$ is the set of points $y$ such that
for every $\epsilon>0$ there are $\epsilon$-pseudo-orbits joining
$x$ to $y$ and $y$ to $x$. 
Two chain recurrence classes are either disjoint or equal. Thus the 
set $\cR(f)$ is the union of pairwise disjoint chain recurrence classes. 
Let us observe that two points in the same homoclinic class 
are also in the same chain recurrence class 
(the converse is false in general, although $C^1$-generically homoclinic classes and chain recurrence classes of periodic points coincide, see \cite{BC}). Thus if $p$ is a hyperbolic periodic point then $H(p,f)\subseteq C(p,f)$.

\begin{mcor}\label{c.function}
Let $M$ be a boundaryless compact manifold and
$\cU$ be a $C^1$-open set in $\diff^1(M)$
 such that 
 every $f\in \cU$ has a pair of  hyperbolic periodic orbits $p_f$ and $q_f$ of different indices depending continuously on $f$ 
 whose chain recurrence classes are equal.  
 Let $\varphi\colon M\to\RR$ be a continuous function 
 such that
$$
\int \varphi\, d\mu_{\cO(p_f)}<0<\int \varphi \,d\mu_{\cO(q_f)}, \quad \mbox{for every $f\in \cU$}.
$$
Then there are two $C^1$-open sets $\cV_p$ and $\cV_q$  whose union is $C^1$-dense in $ \cU$ such that every $f\in\cV_p$ (resp. $f\in\cV_q$) has an ergodic measure
$\mu_f$ whose support is the homoclinic class  $H(p_f,f)$ (resp. $H(q_f,f)$) and satisfies
$\int \varphi\, d\mu_f=0$. 
\end{mcor}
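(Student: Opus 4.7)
The plan is to reduce to Theorem~\ref{t.homoclinic} by first producing, via $C^1$-perturbation, a periodic orbit homoclinically related to $p_f$ (or to $q_f$) whose $\varphi$-average has the sign opposite to $a_{p_f}$ (or $a_{q_f}$). Note that one cannot apply Theorem~\ref{t.homoclinic} directly to the pair $(p_f,q_f)$, because periodic orbits of distinct indices can never be homoclinically related.

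Define $\cV_p$ as the set of $f\in\cU$ such that there exists a hyperbolic periodic orbit $\cO(\tilde q_f)$ homoclinically related to $\cO(p_f)$ with
\[
\int\varphi\, d\mu_{\cO(\tilde q_f)}>0,
\]
and $\cV_q$ symmetrically (with $\cO(\tilde p_f)$ homoclinically related to $\cO(q_f)$ and $\int\varphi\,d\mu_{\cO(\tilde p_f)}<0$). Both sets are $C^1$-open: a transverse homoclinic relation between hyperbolic periodic orbits is a $C^1$-robust property, and the sign condition on the integral is open by continuity of $\varphi$ and of the orbits under $C^1$-perturbation. Once openness is established, for $f\in\cV_p$ the pair $(p_f,\tilde q_f)$ is homoclinically related and satisfies $a_{p_f}<0<\int\varphi\,d\mu_{\cO(\tilde q_f)}$, so Theorem~\ref{t.homoclinic} applied to the value $t=0\in(a_{p_f},\int\varphi\,d\mu_{\cO(\tilde q_f)})$ produces the desired ergodic measure $\mu_f$ with full support in $H(p_f,f)=H(\tilde q_f,f)$ and $\int\varphi\,d\mu_f=0$; the case $f\in\cV_q$ is identical.

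The core issue is $C^1$-density of $\cV_p\cup\cV_q$ in $\cU$. Fix $f\in\cU$. Since $p_f$ and $q_f$ lie in the same chain recurrence class, the $C^1$-connecting lemma (Hayashi, and its refinements by Bonatti--Crovisier \cite{BC}) allows an arbitrarily small $C^1$-perturbation $g$ of $f$ creating a heteroclinic cycle between $\cO(p_g)$ and $\cO(q_g)$. Because $p_g$ and $q_g$ have different indices, standard arguments on such heteroclinic cycles (in the spirit of \cite{BDG} and the flip-flop constructions used throughout this paper) produce, after a further $C^1$-perturbation, new hyperbolic periodic orbits that shadow the cycle and whose invariant measure is close, in the weak$\ast$ topology, to any prescribed convex combination $s\,\mu_{\cO(p_g)}+(1-s)\,\mu_{\cO(q_g)}$ with $s\in(0,1)$. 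By choosing the weight so that the combined $\varphi$-integral has a definite sign and by suitably controlling the hyperbolicity along the cycle, one can arrange the produced orbit to have either index $\ind(p_g)$ (with $\varphi$-average close to $\int\varphi\,d\mu_{\cO(q_g)}>0$, placing $g$ in $\cV_p$) or index $\ind(q_g)$ (with $\varphi$-average close to $\int\varphi\,d\mu_{\cO(p_g)}<0$, placing $g$ in $\cV_q$). In either case we find, arbitrarily $C^1$-close to $f$, a diffeomorphism in $\cV_p\cup\cV_q$, proving density.

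The main obstacle is this last step: the perturbative creation of a homoclinically related periodic orbit of a prescribed index whose invariant measure approximates the one at the other saddle. While the heteroclinic cycle is easy to produce from the connecting lemma, the delicate point is to control simultaneously the index of the new periodic orbit (which forces it into the right homoclinic class) and the time it spends shadowing $\cO(q_g)$ versus $\cO(p_g)$ (which forces the right sign of the $\varphi$-integral). This is precisely where the flip-flop machinery and index-preserving shadowing results developed in \cite{BoBoDi2} and the references therein enter.
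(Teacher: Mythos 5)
Your reduction to Theorem~\ref{t.homoclinic}, and the definition and openness of $\cV_p$ and $\cV_q$, are correct and match the intended structure. The gap is in the density step, and it is a genuine one. A periodic orbit obtained by shadowing the heterodimensional cycle, spending a proportion $s$ of its period near $\cO(p_g)$ and $1-s$ near $\cO(q_g)$, has $\varphi$-average approximately $s\,a_{p}+(1-s)\,a_{q}$, but its Lyapunov exponents are (approximately) the convex combinations, \emph{with the same weight $s$}, of the exponents of $p_g$ and $q_g$; after reducing to consecutive indices there is a single exponent $\chi\approx s\,\chi_p+(1-s)\,\chi_q$ (with $\chi_p<0<\chi_q$) whose sign determines the index, and a $C^1$-small perturbation of the derivative moves it only slightly. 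So the index and the average cannot be prescribed independently: your claim that the new orbit can be given index $\ind(p_g)$ while its $\varphi$-average is close to $\int\varphi\,d\mu_{\cO(q_g)}$ fails, because an average close to $a_{q}$ forces $s$ close to $0$, hence $\chi$ close to $\chi_q$ and index $\ind(q_g)$. What one can actually ask is whether some $s$ satisfies both $s\,a_p+(1-s)\,a_q>0$ and $s\,\chi_p+(1-s)\,\chi_q<0$ (or the symmetric pair), and these two linear inequalities determine windows that may \emph{both} be empty in the borderline case where the two critical values of $s$ coincide. A second, smaller omission: equal index is necessary but not sufficient for a homoclinic relation, so relating the new orbit to $p_g$ or $q_g$ requires another application of the connecting lemma of \cite{BC}.

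The paper's proof decouples the two controls precisely to avoid this. Using \cite{ABCDW} it first restricts to an open dense subset where the indices may be taken consecutive and then, by a small perturbation, produces a periodic point $r$ robustly in the chain class with central eigenvalue equal to $1$ (zero transverse exponent). One then inspects the sign of $\int\varphi\,d\mu_{\cO(r)}$: if it is, say, negative, an arbitrarily small perturbation of the derivative along $\cO(r)$ (leaving the orbit, hence the average, unchanged) gives $r$ the index of $q_g$, and the connecting lemma makes $r$ and $q_g$ homoclinically related; Theorem~\ref{t.homoclinic} then applies to the pair $(r,q_g)$, with a separate short argument for the degenerate case $\int\varphi\,d\mu_{\cO(r)}=0$. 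If you want to keep your shadowing route, aim the weight $s$ at the value making the transverse exponent close to zero (rather than at a value prescribing the average), and then perturb the derivative to select the index — which is essentially the paper's argument.
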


Note that the saddles in the corollary cannot be  homoclinically related and hence Theorem
~\ref{t.homoclinic} cannot be applied.
We   bypass this difficulty by transferring the desired 
averages to pairs of homoclinically related periodic points (then the proof follows from 
Theorem~\ref{t.homoclinic}), 
see Section~\ref{ss.proofofcorollarycfunction} for the proof of the corollary.

\begin{rema}
\label{r.bdpr}
By \cite[Theorem E]{BDPR}, if in Corollary~\ref{c.function} we assume that the chain recurrence
 class 
is partially hyperbolic  with one-dimensional center 
(see definition below) then there is a $C^1$-open and dense subset $\cV$ of $\cU$  
such that $H(p_g,g)=H(q_g,g)$  for all $g\in \cV$. Without this extra hypothesis the equality of the homoclinic classes is only guaranteed for a residual subset of $\cU$, see \cite{BC}. 
\end{rema}

\subsection{Nonhyperbolic ergodic measures with full support}\label{ss.nonhyperbolicfull}

In what follows we focus on \emph{partially hyperbolic diffeomorphisms with one-dimensional center}. 
Our aim is to get results as above
when $\varphi$ is the ``logarithm of the center derivative". 
This will allow us to obtain nonhyperbolic ergodic measures  with large support in quite general nonhyperbolic settings.
Before going to the details we need some
definitions.

Given a diffeomorphism $f$ we say that a compact $f$-invariant set $\Lambda$ 
is \emph{partially hyperbolic with one-dimensional center}
if there is a $Df$-invariant dominated\footnote{A $Df$-invariant splitting $T_\La M=F\oplus E$ is \emph{dominated} if
there are constants $C>0$ and $\lambda<1$ such that $|| Df^{-n} F_{f^n(x)}||\, || Df^n E_x|| < C \lambda^n$
for all $x\in\Lambda$ and $n\in \NN$. In our case domination means that the bundles
$E^{\mathrm{uc} } \oplus E^{\mrss}$
and
$E^{\mruu} \oplus E^{\mathrm{cs}}$ are both dominated,
where $E^\mathrm{uc}= E^{\mruu} \oplus E^{\mrc}$ and
$E^\mathrm{cs}= E^{\mrc} \oplus E^{\mrss}$.}
splitting with three non-trivial bundles 
\begin{equation}\label{e.ph}
T_\Lambda M = E^{\mruu} \oplus E^{\mrc} \oplus E^{\mrss}
\end{equation}
such that 
$E^\mathrm{uu}$ is uniformly expanding,
	$E^{\mathrm{c}}$ has dimension~$1$, and
	$E^\mathrm{ss}$ is uniformly contracting. We say that 
	$E^{\mruu}$ and $E^{\mrss}$ are the \emph{strong unstable} and \emph{strong stable bundles}, 
	respectively,  and that
	$E^\mrc$ is the \emph{central bundle}. We denote by
$d^\mruu$ and $d^\mrss$  the dimensions of $E^{\mruu}$ and $E^{\mrss}$, respectively.

Given an ergodic measure $\mu$ of a diffeomorphism $f$
the  Oseledets' Theorem gives numbers 
$\chi_1(\mu) \ge \chi_2(\mu) \ge \cdots \ge \chi_d(\mu)$, the {\emph{Lyapunov exponents}},
and a $Df$-invariant splitting 
$E_1\oplus E_2\oplus \cdots \oplus E_d$,
the {\em{Oseledets' splitting,}}
where $d =\dim (M)$, with the following property:
for $\mu$-almost every point
$$
\lim_{n \to \pm \infty} \frac{\log \|Df^n_x (v_i)\|}{n} = \chi_i(\mu), \quad 
\mbox{for every $i$ and $v\in E_i\setminus \{\bar 0\}$}.
$$
If the measure is supported  on a partially hyperbolic set with one-dimensional center as above
then 
$$
E^{\mruu}=E_1\oplus \cdots \oplus E_{d^\mruu},
\quad
E^{\mrc}=E_{d^\mruu+1},
\quad
E^{\mrss}=E_{d^\mruu+2}\oplus \cdots  \oplus E_{d},
$$
and
$\chi_{d^\mruu}(\mu)>0> \chi_{d^\mruu+2}(\mu)$.
Let  $\chi_{d^\mruu+1}(\mu)\eqdef \chi_{\mrc}(\mu)$,  
we say that $\chi_{\mrc}(\mu)$ is the \emph{central exponent} of $\mu$.
In this partially hyperbolic setting the 
 \emph{logarithm of the center derivative}
map
\begin{equation}\label{e.logmap}
 \mathrm{J}_f^{\mrc}(x) \eqdef \log | Df_x |_{E^\mrc (x)}|
\end{equation}
is well defined and continuous, 
 therefore
the central Lyapunov exponent of the measure is given  by the integral
$$
 \chi_{\mrc}(\mu)= \int  \mathrm{J}_f^\mrc \, d\mu.
 $$
This equality allows to use the methods in the previous sections to construct and control nonhyperbolic ergodic measures.

%
%
%
%

Let us explain some relevant points of our study. 
 A  (new) difficulty, compared with Theorem~\ref{t.homoclinic},
 is that the logarithm of the center derivative $ \mathrm{J}_f^\mrc$
 cannot take values with different signs at homoclinically related periodic points
 (by definition, such points have the same indices and thus the sign of $ \mathrm{J}_f^\mrc$ is the same). 
 To recover this signal property we 
 consider  chain recurrence classes containing  saddles of different indices.

%

\begin{theo}\label{t.cycle} 
Let $M$ be a  boundaryless compact manifold
and $\cU$ a $C^1$-open set of $\diff^1(M)$
 such that  every $f\in \cU$ has hyperbolic periodic orbits $p_f$ and $q_f$ such that:
 \begin{itemize}
 \item
 they have different indices and
 depend continuously on $f\in \cU$,
 \item 
 their chain recurrence classes $C(p_f,f)$ and $C(q_f,f)$ are equal and have a partially hyperbolic splitting with 
 one-dimensional center. 
 \end{itemize}
Then there is a $C^1$-open and dense subset $\cV\subset \cU$ such  that every 
diffeomorphism $f\in\cV$ has a nonhyperbolic ergodic measure
$\mu_f$ 
 whose support
is the homoclinic class $H(p_f,f)=H(q_f,f)$.  
\end{theo}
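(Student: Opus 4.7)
The plan is to apply Corollary~\ref{c.flipfloptail} to the potential $\varphi = \mathrm{J}_f^{\mrc}$ (the logarithm of the center derivative), continuously extended to $M$ by Tietze without affecting any relevant integral. Since the measure $\mu_f$ produced by that corollary then satisfies $\chi_{\mrc}(\mu_f) = \int \mathrm{J}_f^{\mrc}\,d\mu_f = 0$ and has full support in the prescribed set $Y$, it will be the desired nonhyperbolic ergodic measure. The whole task therefore reduces to producing, on a $C^1$-open and dense subset of $\cU$, a flip-flop family for $(\mathrm{J}_f^{\mrc}, f)$ that sojourns along the homoclinic class $H(p_f,f)$.

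By Remark~\ref{r.bdpr} (invoking \cite[Theorem E]{BDPR} and using that we are in the partially hyperbolic one-dimensional-center setting), the equality $H(p_f,f) = H(q_f,f)$ holds on a $C^1$-open and dense $\cU' \subset \cU$; write $H_f$ for this common class on $\cU'$. Because $p_f$ and $q_f$ have different indices and the center is one-dimensional, the integrals $\int \mathrm{J}_f^{\mrc}\,d\mu_{\cO(p_f)}$ and $\int \mathrm{J}_f^{\mrc}\,d\mu_{\cO(q_f)}$ automatically have opposite signs. Following the strategy of \cite{BoBoDi2, BDG}, I would then take the plaques of $\fF$ to be strong unstable discs and obtain the flip-flop property (Definition~\ref{d.flipflop}(b)) from a heterodimensional connection between $p_f$ and $q_f$ inside $H_f$, produced after an arbitrarily small $C^1$-perturbation (blender) if necessary; the contraction estimate (c) of Definition~\ref{d.flipflop} is automatic from the domination of $E^{\mruu} \oplus E^{\mathrm{cs}}$.

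The new and hardest step is the sojourn construction. Given $\delta > 0$, fix a finite set of periodic orbits $\gamma_1, \dots, \gamma_\ell \subset H_f$ whose union is $\delta$-dense in $H_f$ (possible since periodic points are dense in a homoclinic class), and inside each plaque $D \in \fF$ locate subdiscs $\widehat D^+, \widehat D^- \subset D$ whose orbit of a common length $N = N_\delta$ first passes $\delta$-close to each $\gamma_i$ via heteroclinic concatenations and then lands in a plaque of $\fF^+$, respectively $\fF^-$; the strong unstable contraction again gives the estimate (c) of Definition~\ref{d.flipfloptail}. The principal difficulty is to reconcile the length of this sojourn --- long enough to sprinkle the orbit $\delta$-densely in $H_f$ --- with the exponential contraction rate $\zeta_i$ controlling how much the subdisc can deform, which is exactly the trade-off that the new criterion is designed to handle. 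Once the family is built, Corollary~\ref{c.flipfloptail} yields the measure $\mu_f$, and the resulting set $\cV$ is $C^1$-open-and-dense because every ingredient (indices, heterodimensional connection, blender, cone field, and sign of $\mathrm{J}_f^{\mrc}$ at the two saddles) is $C^1$-robust while each required perturbation can be made arbitrarily small.
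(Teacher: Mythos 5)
Your overall architecture coincides with the paper's: reduce to the center Jacobian $\mathrm{J}_f^{\mrc}$, obtain $H(p_f,f)=H(q_f,f)$ on a $C^1$-open and dense subset via Remark~\ref{r.bdpr}, build a flip-flop family with sojourns along the class, and feed it to the abstract criterion (the paper packages this as Theorem~\ref{t.p.tail}, built on Theorems~\ref{t.p.flipfloptail}, \ref{t.flipfloptail} and \ref{t.accumulation}). The two steps you assert rather than prove are, however, exactly where the content lies. First, since $p_f$ and $q_f$ have different indices they are never homoclinically related, so ``a heterodimensional connection, produced after a small perturbation if necessary'' does not yield item~\eqref{i.flipflop2} of Definition~\ref{d.flipflop}: the plaques are discs of dimension equal to the \emph{smaller} $\mru$-index, tangent to $\cC^{\mruu}$, hence strictly thinner than the unstable manifolds on the side of the saddle of larger index, and the $\lambda$-lemma cannot force the image of \emph{every} plaque to contain a plaque there. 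This is precisely what the blender's strictly invariant family of discs provides, and getting it on an open and dense subset of $\cU$ requires the chain ``robust cycles (\cite{BD-cycles}) $\Rightarrow$ flip-flop configurations (\cite[Propositions 5.2 and 5.3]{BoBoDi2})'' that the paper invokes explicitly; your parenthetical ``(blender) if necessary'' conceals the indispensable mechanism.

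Second, you leave the sojourn construction as ``the principal difficulty'' without resolving it, and you locate the difficulty in the wrong place. There is no tension between the sojourn length $N_\delta$ and the rates $\zeta_i$: the plaques stay tangent to the uniformly expanded cone field $\cC^{\mruu}$ throughout, so longer excursions only improve the backward-contraction estimate in item~\eqref{i.defff2} of Definition~\ref{d.flipfloptail}. The paper carries out the construction in Proposition~\ref{p.l.flipfloptail} and Lemma~\ref{l.preparar} by taking a \emph{single} hyperbolic periodic orbit $r_\delta$, homoclinically related to $q_f$ and $\delta/2$-dense in the class (Remark~\ref{r.existenceofperiodicorbits}), and applying the $\lambda$-lemma twice; the genuine balance between the time spent sojourning (where averages are uncontrolled) and the time spent controlling averages is then absorbed entirely by the sparse-tail criterion of Theorem~\ref{t.accumulation}, not by the geometry of the family. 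Finally, you omit a small but necessary verification: the controlled point, hence the limit measure, must actually lie \emph{in} $H(q_f,f)$, so that the support equals the class rather than merely containing it; the paper secures this by starting from a plaque inside $W^{\mru}_{loc}(q_f,f)$ and using that every plaque meets $W^{\mrs}(q_f,f)$ transversely.
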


Let us first observe that Theorem~\ref{t.cycle} can be rephrased in terms of robust cycles
instead of periodic points in the same chain recurrence class.
For that we need to review  the definition of a \emph{robust cycle}.
 Recall that a hyperbolic set $\Lambda_f$  of $f\in \Diff^1(M)$
 has a well  defined hyperbolic continuation $\La_g$ for every $g$ close to 
 $f$.
 Two transitive hyperbolic basic sets 
$\Lambda_f$ and $\Gamma_f$ of a diffeomorphism $f$ have 
a {\emph{$C^1$-robust (heterodimensional) cycle}} if these sets have
  different indices  and if there is a $C^1$-neighbourhood $\cU_f$ of $f$ 
  such that for every $g\in\cU$ the invariant sets of $\La_g$ and $\Ga_g$  intersect cyclically.
 As discussed
  in \cite{BoBoDi2}, the dynamical scenarios 
  of ``dynamics with $C^1$-robust cycles" and ``dynamics with chain recurrence classes containing $C^1$-robustly saddles of different indices"
  are essentially equivalent (they coincide in a $C^1$-open and dense subset of  $\diff^1(M)$).

We now describe explicitly 
the open and dense subset $\cV$ of  $\cU$ in Theorem~\ref{t.cycle}
using 
  \emph{dynamical blenders} and 
 \emph{flip-flop configurations} introduced in   \cite{BoBoDi2}, see Remark~\ref{r.described}.
Naively, a
 \emph{dynamical blender} is a hyperbolic and partially hyperbolic set together with a
 {\emph{strictly invariant
  family of
 discs}} (i.e., the image of any disc of the family contains another
   disc of the family)
almost tangent to its strong unstable direction,  see Definition~\ref{d.dynamicalblender}.
In very rough terms, a \emph{flip-flop configuration} 
 of a diffeomorphism $f$ and a continuous function $\varphi$ 
  is a $C^1$-robust  cycle associated to
a hyperbolic periodic point $q$  and
 a dynamical blender  $\La$ 
such that 
$\varphi$ is bigger than $\alpha>0$ in the blender $\Lambda$  and smaller than $-\alpha<0$ on the orbit of $q$.
Important properties of flip-flop configurations are their $C^1$-robustness, 
that they occur
$C^1$-open and densely in the set $\cU$ in Theorem~\ref{t.cycle}, and  
that they yield
flip-flop families. The latter allows to apply our criterion for zero averages.
The set $\cV$ in Theorem~\ref{t.cycle} is described in the remark below.

\begin{rema}[The set $\cV$ in Theorem~\ref{t.cycle}]
\label{r.described}
The set $\cV$ is the subset of $\cU$ of diffeomorphisms with flip-flop configurations ``containing" the saddle $q_g$.
\end{rema}

To state our next result recall that a \emph{filtrating region} of a diffeomorphism $f$ is the intersection of an attracting region and a repelling region of $f$. Let $U$ be  a filtrating region of $f$
 endowed with a strictly forward invariant unstable cone field of index $i$ and a 
 strictly backward invariant cone field of index $\dim(M)-i-1$, see  
 Section~\ref{ss.invariantconefields} for the precise definitions.
Then the maximal 
 $f$-invariant set  in $U$ has a 
partially hyperbolic  splitting 
$E^{\mruu}\oplus E^{\mrc} \oplus E^{\mrss}$, with $\dim (E^\mrc)=1$. 
As above this  allows us 
 to define the logarithm of the
center derivative $ \mathrm{J}_f^{\mrc}$ of $f$. We have the following  ``variation" of Theorem~\ref{t.cycle}.

\begin{theo}\label{t.ctail} 
Let $M$ be a  boundaryless compact manifold.
Consider $f\in \diff^1(M)$  with a
a filtrating region $U$ endowed 
with a strictly $Df$-invariant unstable cone field of index $i$ and a 
strictly $Df^{-1}$-invariant cone field of index $\dim(M)-i-1$.  

Assume that $f$ has a flip-flop configuration associated to
a dynamical blender 
and  a
hyperbolic periodic point $q$ both contained in $U$.

Then there is a $C^1$-neighbourhood $\cV_f$ of $f$ such that
every $g\in \cV_f$ has a nonhyperbolic ergodic measure whose support is the whole homoclinic class
$H(q_g,g)$ of the continuation $q_g$ of $q$.
\end{theo}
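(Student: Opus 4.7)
\medskip
\noindent
\textbf{Proof plan (for Theorem~\ref{t.ctail}).}
The plan is to apply Corollary~\ref{c.flipfloptail} to the logarithm of the central derivative $\varphi = \mathrm{J}_f^{\mrc}$, which is well defined and continuous on the maximal invariant set in $U$ (by the cone-field assumption), with target set $Y = H(q_f,f)$. The conclusion of that corollary would then yield an ergodic measure with full support in $H(q_f,f)$ satisfying $\int \mathrm{J}_f^{\mrc}\,d\nu = \chi_{\mrc}(\nu) = 0$, that is, a nonhyperbolic ergodic measure of the required type. To invoke the corollary I must exhibit a flip-flop family for $(\varphi,f)$ that has sojourns along $H(q_f,f)$.

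The flip-flop configuration already produces a flip-flop family $\fF = \fF^+ \sqcup \fF^-$ associated to $\varphi$ and $f$: plaques of $\fF^+$ are small discs tangent to a strong unstable cone field of dimension $d^\mruu$ lying in a neighbourhood of the blender $\Lambda$, where $\varphi \geq \alpha$ because the centre bundle is expanded on $\Lambda$; plaques of $\fF^-$ are analogous discs near $\cO(q)$, where $\varphi \leq -\alpha$ because $q$ has centre stable. Conditions \eqref{i.flipflop1} and \eqref{i.flipflop2} of Definition~\ref{d.flipflop} follow from the robust heterodimensional cycle between $q$ and $\Lambda$ together with the blender's strictly invariant family of discs, and the backward-contraction condition \eqref{i.flipflop33} from the domination of the partially hyperbolic splitting restricted to plaques tangent to the strong unstable cone.

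To upgrade this family to one with sojourns along $H(q_f,f)$, fix $\delta>0$. Since periodic points homoclinically related to $q_f$ are dense in $H(q_f,f)$, pick a finite set $\{p_1,\dots,p_k\}$ of such periodic points whose orbits form a $\delta/3$-net of $H(q_f,f)$, and chain them by heteroclinic connections (factoring through $q_f$, using the homoclinic relations) into a pseudo-orbit that visits each $p_j$ up to distance $\delta$. Given any plaque $D\in \fF$, the $\lambda$-lemma together with the unstable cone-field domination implies that suitable forward iterates $f^m(D)$ accumulate in the $C^1$ topology on local unstable manifolds of periodic points of $H(q_f,f)$; selecting sub-plaques of $D$ whose forward images shadow the prescribed itinerary, and arranging the last stage of this itinerary to terminate near $\Lambda$ (respectively near $\cO(q)$) inside a plaque of $\fF^+$ (respectively $\fF^-$), produces the subsets $\widehat D^+,\widehat D^-\subset D$ of Definition~\ref{d.flipfloptail} for some $N=N_\delta$ uniform over $D\in \fF$. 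The contraction estimate \eqref{i.defff2} is again inherited from the strong unstable cone-field.

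Corollary~\ref{c.flipfloptail} then yields, from any plaque $D$, a point $x_D$ whose empirical measures weak$\ast$-accumulate on measures with full support in $H(q_f,f)$ and zero $\varphi$-integral; passing to the ergodic decomposition gives the desired ergodic nonhyperbolic measure for $f$. For $g$ in a small $C^1$-neighbourhood $\cV_f$ of $f$: the filtrating region, the invariant cone fields and the induced partially hyperbolic splitting persist, as do the blender together with its strictly invariant family, the hyperbolic continuation $q_g$, the homoclinically related periodic points used as sojourn targets, and the heteroclinic connections provided by the $\lambda$-lemma. All constants involved in the construction (namely $\alpha$, the sequence $(\zeta_n)$, and the sojourn times $N_\delta$) depend continuously on $g$, so the same construction goes through uniformly and produces a nonhyperbolic ergodic measure with support $H(q_g,g)$ for every $g\in\cV_f$. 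The main technical obstacle is the sojourn construction itself: one must simultaneously guarantee (i) that the sub-plaques $\widehat D^\pm$ are $\delta$-dense in $H(q_f,f)$, (ii) that $f^{N_\delta}(\widehat D^\pm)$ lie in $\fF^\pm$ for a \emph{single} common return time, and (iii) that the quantitative unstable-contraction estimate is preserved throughout; meeting these three constraints simultaneously requires careful bookkeeping of the inclination-lemma estimates along the shadowed heteroclinic itinerary together with the strictly invariant structure of the blender to close the loop in the correct $\fF^\pm$-plaque at step $N_\delta$.
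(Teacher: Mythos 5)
Your plan follows the paper's route almost exactly: the paper also (i) extracts a flip-flop family from the flip-flop configuration via \cite[Proposition 4.9]{BoBoDi2} (Proposition~\ref{p.flipflopconf}), (ii) upgrades it to a family with sojourns along $H(q,f)$ (Theorem~\ref{t.p.flipfloptail} and Proposition~\ref{p.l.flipfloptail}), (iii) feeds $\varphi=\mathrm{J}_f^{\mrc}$ (made pointwise positive on the blender and negative on $\cO(q)$ by an adapted metric, then extended continuously) into the abstract criterion via Theorem~\ref{t.flipfloptail} and Theorem~\ref{t.accumulation}, and (iv) gets the statement for all $g\in\cV_f$ from the $C^1$-openness of the hypotheses. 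One cosmetic difference: for the sojourns the paper does not chain a finite net of periodic orbits; Remark~\ref{r.existenceofperiodicorbits} produces a \emph{single} periodic point $r_\delta$ homoclinically related to $q$ whose orbit is already $\delta/2$-dense in $H(q,f)$, so only one orbit has to be shadowed. Your chaining of several $p_j$'s through heteroclinic connections achieves the same but with strictly more bookkeeping, and you must produce genuine orbit segments (not just pseudo-orbits) realizing the itinerary.

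There is one step you pass over that the paper treats as a separate claim and that is genuinely needed. The conclusion is that the support \emph{equals} $H(q_g,g)$, while Corollary~\ref{c.flipfloptail} only delivers that $\mu$-a.e.\ orbit is dense in $Y=H(q_g,g)$, i.e.\ $\mathrm{supp}(\nu)\supseteq H(q_g,g)$. For the reverse inclusion you must ensure that the controlled point $x_D$ — hence its $\omega$-limit set and the limit measure — actually lies in the (closed, invariant) class $H(q_g,g)$; for an arbitrary plaque $D\in\fF$ this is not automatic, since the plaques merely live in the partially hyperbolic neighbourhood $U$ and the maximal invariant set there may be strictly larger than the class. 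The paper fixes this in the proof of Theorem~\ref{t.p.tail} by starting from a plaque $\Delta\subset W^{\mru}_{loc}(q,f)$: the nested plaques $D_k$ then stay in $W^{\mru}(q,f^N)$ and each meets $W^{\mrs}(q,f^N)$ transversely, so each contains a point of the class and the nested intersection point $x_\Delta$ belongs to it. You should add this choice of initial plaque (Theorem~\ref{t.p.flipfloptail} guarantees $\fF$ can be taken with a plaque inside $W^{\mru}_{loc}(q,f^N)$ and with every plaque meeting $W^{\mrs}(q,f^N)$ transversely). Finally, for the robustness you do not need $\alpha$, $(\zeta_n)$ or $N_\delta$ to depend continuously on $g$ (and $N_\delta$ need not): it suffices that the hypotheses — filtrating region, invariant cone fields, flip-flop configuration — are $C^1$-open, after which the entire construction is rerun for each $g\in\cV_f$ separately; this is exactly how the paper phrases it.
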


The hypothesis in this theorem imply that the blender and the saddle in the flip-flop configuration are in the
same chain recurrence class. With  the terminology of robust cycles,  they have a $C^1$-robust cycle. 

Note that
Theorem~\ref{t.ctail} is not a perturbation result: it
holds for every  diffeomorphism with such a  flip-flop configuration. Moreover, and more important,
the hypotheses in Theorem~\ref{t.ctail} are open (the set $U$ is also a filtrating set 
for every $g$ sufficiently close to $f$, hence 
the homoclinic class $ H(q_g,g)$ is contained in $U$ and 
partially hyperbolic, and   flip-flop configurations are robust). 
Thus
Theorem~\ref{t.ctail} holds for the homoclinic class of 
the continuation of $q$ for diffeomorphisms $g$ close to $f$.
%

\begin{rema}\label{r.newbdg}
Theorem~\ref{t.ctail} does not require  the
continuous variation of the homoclinic class $H(q_g,g)$ with respect to $g$. Note also that, in general, 
homoclinic classes  only depend lower semi-continuously on the diffeomorphism.
As a consequence, the partial hyperbolicity of a homoclinic class is not (in general) a robust property.
The relevant assumption is that the homoclinic  classes are contained in a partially hyperbolic filtrating neighbourhood which guaranteed the robust partial hyperbolicity of the homoclinic class.

We can change the hypotheses in the theorem, omitting that $U$ is a filtrating neighbourhood
and considering homoclinic classes depending continuously on the diffeomorphism 
(this occurs in a  residual subset of diffeomorphisms). Then, by continuity, the class is robustly contained in the partially hyperbolic region
and we can apply the previous arguments.
\end{rema}

\subsection{Applications to robustly nonhyperbolic transitive diffeomorphisms}
\label{ss.aplication}
A diffeomorphism $f\in \diff^1(M)$ is 
\emph{transitive} if it has a dense orbit. The diffeomorphism $f$ is 
\emph{$C^1$-robustly transitive} if any diffeomorphism $g$ that is $C^1$-close to $f$ is also transitive.  
In other words, a diffeomorphism is $C^1$-robustly transitive 
if it belongs to the $C^1$-interior of the set of transitive diffeomorphisms.

We denote by $\cR\cT(M)$ the ($C^1$-open) subset of $\diff^1(M)$ 
consisting of diffeomorphisms $f$ such that:
\begin{itemize}
\item
$f$ is robustly transitive, 
\item
$f$ has a pair  of hyperbolic periodic points of different indices,
\item 
$f$ has a partially hyperbolic splitting $TM=E^\mruu \oplus E^\mrc\oplus E^\mrss$, where $E^\mruu$ is uniformly expanding, 
$E^\mrss$ is uniformly contracting, and $E^\mrc$ is one-dimensional.
\end{itemize}
Note that the last condition implies that the hyperbolic periodic points of $f$
have either $\mru$-index $\dim (E^\mruu)$ or $\dim (E^\mruu)+1$. Note also that our assumptions imply that
$\dim(M)\ge 3$ (in lower dimensions $\cR\cT(M)=\emptyset$ , see \cite{PuSa}).

In dimension $\ge 3$ and depending on the type of manifold $M$, the set $\cR\cT(M)$ contains 
interesting examples. 
Chronologically, the first examples of such partially hyperbolic robustly transitive diffeomorphisms
were obtained in \cite{Sh} considering diffeomorphisms in $\TT^4$ obtained as  skew products of Anosov diffeomorphisms
on $\TT^2$ and derived from Anosov  on $\TT^2$
($\TT^i$ stands for the $i$-dimensional torus). Later, \cite{Mda} provides examples in $\TT^3$
considering  derived from Anosov diffeomorphisms. Finally, \cite{BD-robtran} gives examples that 
include  perturbations of  time-one maps of transitive Anosov flows
and perturbations of skew products of Anosov diffeomorphisms 
and isometries.

\begin{theo}
\label{t.c.openanddense}
There is a $C^1$-open and dense subset $\cZ(M)$ of $\cR\cT(M)$ such that
every $f\in \cZ(M)$ has an ergodic nonhyperbolic measure
whose support is the whole manifold $M$.
\end{theo}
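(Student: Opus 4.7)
My strategy is to reduce Theorem~\ref{t.c.openanddense} to Theorem~\ref{t.ctail}, taking the whole manifold $M$ as the filtrating region and upgrading the support of the resulting nonhyperbolic ergodic measure from $H(q_g,g)$ to $M$ via a robust transitivity argument.

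Any $f \in \cR\cT(M)$ has hyperbolic saddles $p_f, q_f$ of different indices that persist continuously on a $C^1$-neighborhood, and $M$ itself is partially hyperbolic with one-dimensional center. By robust transitivity, $C(p_f,f) = C(q_f,f) = M$, so $\cR\cT(M)$ fits the hypotheses of Theorem~\ref{t.cycle}. By the discussion preceding Remark~\ref{r.described}, there is a $C^1$-open and dense subset $\cW \subset \cR\cT(M)$ of diffeomorphisms admitting a flip-flop configuration associated to a dynamical blender and the saddle $q_f$, both contained in $M$.

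For each $f \in \cW$, I would build an associated flip-flop family $\fF$ for the logarithm of the center derivative $\mathrm{J}_f^{\mrc}$ and, crucially, arrange that $\fF$ has sojourns along all of $M$ (rather than merely along $H(q_f,f)$ as suffices to conclude Theorem~\ref{t.ctail}). Robust transitivity of $f$ on $M$, together with the exponential expansion of strong-unstable plaques and the inclination lemma, should allow one to find in each plaque sub-plaques whose $N_\delta$-iterate is $\delta$-dense in $M$ and returns to $\fF$, fulfilling the sojourn condition of Definition~\ref{d.flipfloptail} with $Y=M$. Corollary~\ref{c.flipfloptail} then delivers an ergodic measure with full support in $M$ and zero integral of $\mathrm{J}_f^{\mrc}$, i.e., a nonhyperbolic ergodic measure of full support. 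All of the ingredients---flip-flop configuration, strict cone invariance, robust transitivity, and the inclination-lemma estimates---persist under $C^1$-small perturbations, so the construction holds throughout a $C^1$-neighborhood of $f$, giving a $C^1$-open subset of $\cR\cT(M)$ which, combined with the density of $\cW$, is $C^1$-open and $C^1$-dense. This open-dense subset is the desired $\cZ(M)$.

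The main obstacle lies in Step~2, namely constructing the sojourns along $M$ in a way compatible with the sparseness requirement of the ``long sparse tail'' criterion of Definitions~\ref{d.alphacontrol} and~\ref{d.controledtail}: one must balance the sojourn times $N_\delta$ against the shadowing times of the flip-flop family so that the control of the Birkhoff average of $\mathrm{J}_f^{\mrc}$ is preserved while simultaneously producing orbits dense in $M$. Doing this uniformly over a $C^1$-neighborhood of $f$, with robust transitivity as the principal tool, is the technical heart of the argument; the genericity fact $H(q_g,g)=C(q_g,g)=M$ from \cite{BC} gives the right structural picture but only on a residual set, so robustness has to be re-derived directly from the enhanced sojourn construction.
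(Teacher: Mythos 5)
Your overall reduction (to Theorem~\ref{t.cycle}/\ref{t.ctail} applied with $C(p_f,f)=C(q_f,f)=M$) is the right frame, but the step you yourself identify as ``the technical heart'' --- arranging sojourns along all of $M$ directly from robust transitivity --- is a genuine gap, not a technicality. The sojourn construction (Proposition~\ref{p.flipflophomoclinic}, Lemma~\ref{l.preparar}) does not run on transitivity: it runs on Remark~\ref{r.existenceofperiodicorbits}, i.e.\ on the existence, for each $\delta$, of a hyperbolic periodic point $r_\delta$ \emph{homoclinically related to $q$} whose orbit is $\delta$-dense in the target set $Y$; one then uses the $\lambda$-lemma on $W^{\mrs}(\cO(r_\delta))$ and $W^{\mru}(\cO(r_\delta))$ to make every plaque shadow $\cO(r_\delta)$ and return to the family with expansion. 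For $Y=M$ the required input is precisely that periodic points homoclinically related to $q$ are dense in $M$, i.e.\ $H(q,f)=M$ --- which is what you are trying to prove. A dense orbit supplied by transitivity is neither periodic nor homoclinically related to $q$, so it cannot be fed into the $\lambda$-lemma return mechanism, and the \cite{BC} identity $H(q,f)=C(q,f)=M$ is only residual, as you note. Your proposal leaves this circle unbroken.

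The paper breaks it with a separate, concrete input: Proposition~\ref{p.l.h(q)} shows that $H(p_f,f)=H(q_f,f)=M$ on a $C^1$-open and dense subset of $\cR\cT(M)$, using the $C^1$-open and dense robust minimality of the strong stable or strong unstable foliation for such diffeomorphisms (\cite{BDU,RHU}, Lemma~\ref{l.minimal}), together with \cite[Theorem~E]{BDPR} to identify the two classes. Once $H(q_f,f)=M$ openly and densely, Theorem~\ref{t.cycle} applies verbatim --- its sojourns along the homoclinic class are already sojourns along $M$ --- and no modification of the flip-flop or tail construction is needed. If you want to salvage your route, you should replace the appeal to robust transitivity by this foliation-minimality argument (or some other open-and-dense mechanism forcing $H(q_f,f)=M$); as written, the proof is incomplete at exactly the point where the conclusion ``full support in $M$'' is supposed to be obtained.
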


Let us mention some related results. First, by
\cite{BoBoDi2}, there is  a $C^1$-open and dense subset
 of $\cR\cT(M)$ formed by diffeomorphisms
with  an ergodic nonhyperbolic measure
with positive entropy, but the support of these measures is not the whole ambient.
By \cite{BDG}, there is a residual subset of $\cR\cT(M)$  of diffeomorphism with an
ergodic nonhyperbolic measure with full support. 
Finally, a statement similar to our theorem
is stated in \cite{BJ}, see Footnote \ref{fn}.

Recall that given a periodic point $p$  of $f$ the measure   $\mu_{\cO(p)}$ is the 
unique $f$-invariant measure supported on the orbit of $p$. 

\begin{mcor}
\label{c.averages}
Consider a continuous map $\varphi \colon M \to \mathbb{R}$.
Suppose that $f\in \cR\cT(M)$ 
has two hyperbolic periodic orbits $p$ and 
$q$ such that
$$
\mu_{\cO(p)}  (\varphi)> 0> \mu_{\cO(q)}  (\varphi).
$$
Then there are a $C^1$-neighbourhood $\cV_f$ of $f$ and a
$C^1$-open and dense subset $\cO_f$ of $\cV_f$ such that every $g\in \cO_f$
has an ergodic measure $\mu_g$ with full support  on $M$ such that
$$
\int \varphi \, d\mu_g=0.
$$
\end{mcor}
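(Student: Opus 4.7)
The plan is to combine Corollary~\ref{c.function} and Theorem~\ref{t.homoclinic}, which respectively furnish zero-$\varphi$-average ergodic measures supported on a full homoclinic class, with the fact that in the robustly transitive partially hyperbolic setting with one-dimensional center the relevant homoclinic class is robustly equal to the whole manifold $M$ on a $C^1$-open and dense subset.

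First I fix a $C^1$-neighbourhood $\cV_f\subseteq\cR\cT(M)$ of $f$ small enough that the hyperbolic continuations $p_g,q_g$ are defined on $\cV_f$ and, by continuous dependence of periodic measures on $g$, still satisfy $\mu_{\cO(p_g)}(\varphi)>0>\mu_{\cO(q_g)}(\varphi)$. Since every $g\in\cV_f$ is transitive, $\cR(g)=M$, and hence $C(p_g,g)=C(q_g,g)=M$.

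Next I produce the zero-average ergodic measure on a $C^1$-open and dense subset $\cO_1\subseteq\cV_f$, splitting by the $\mru$-indices of $p$ and $q$. If these indices differ, Corollary~\ref{c.function} applied with $\cU=\cV_f$ directly provides $\cO_1$ together with an ergodic measure $\mu_g$ satisfying $\int\varphi\,d\mu_g=0$ and having support equal to $H(p_g,g)$ or $H(q_g,g)$. If they agree, \cite{BC} ensures that on a residual subset of $\cV_f$ one has $q_g\in H(p_g,g)$, and a $C^1$-small perturbation via the $C^1$ connecting lemma creates transverse heteroclinic intersections between the invariant manifolds of $p_g$ and $q_g$, making them homoclinically related; as being homoclinically related is a $C^1$-open property, it holds on a $C^1$-open and dense subset of $\cV_f$, and Theorem~\ref{t.homoclinic} then supplies the desired $\mu_g$ with support $H(p_g,g)=H(q_g,g)$.

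Finally I upgrade the support to $M$. I claim that the set
$$
\cO_2\eqdef\{g\in\cV_f:\ H(p_g,g)=H(q_g,g)=M\}
$$
contains a $C^1$-open and dense subset of $\cV_f$. Density is immediate from \cite{BC}: $C^1$-generically the homoclinic class of every hyperbolic periodic point coincides with its chain recurrence class, which is $M$ in our setting. For openness one uses that in $\cR\cT(M)$ the strong unstable and strong stable foliations are robustly minimal on a $C^1$-open and dense subset; combined with the one-dimensional center this forces $W^u(\cO(p_g))$ and $W^s(\cO(p_g))$ to be dense in $M$, and then a $\lambda$-lemma argument gives $H(p_g,g)=M$ on an open set. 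Setting $\cO_f=\cO_1\cap\cO_2$ yields a $C^1$-open and dense subset of $\cV_f$ on which the ergodic measure $\mu_g$ has full support in $M$ and satisfies $\int\varphi\,d\mu_g=0$. The main obstacle is the openness half of the last step: it relies on robust minimality results for the strong foliations specific to the $\cR\cT(M)$ setting, which sit outside the criterion developed in the main body of the paper; once it is available the corollary follows by direct combination of the results already proved.
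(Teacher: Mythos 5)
Your proposal follows essentially the same route as the paper: the paper deduces Corollary~\ref{c.averages} from Corollary~\ref{c.function} together with Proposition~\ref{p.l.h(q)}, which is precisely your set $\cO_2$; you are in fact more explicit than the paper about the case where $p$ and $q$ have the same $\mru$-index, which you handle correctly via the connecting lemma and Theorem~\ref{t.homoclinic}. One detail in your final step is imprecise: only one of the two strong foliations is known to be robustly minimal on an open and dense subset (the relevant set is $\cM_j(M)=\cM^{\mathrm{s}}_j(M)\cup\cM^{\mathrm{u}}_j(M)$, a union, not an intersection), so you cannot conclude that both $W^{\mathrm{u}}(\cO(p_g))$ and $W^{\mathrm{s}}(\cO(p_g))$ are dense. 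The paper's Claim~\ref{cl.sacocheio} instead uses minimality of, say, $\cF^{\mathrm{uu}}_f$ only for the saddle whose unstable manifold is a strong unstable leaf, combined with the fact that the stable manifold of that saddle has dimension $j+1$ and hence transversally meets every sufficiently large strong unstable disc; the saddle of the other index is then covered via \cite[Theorem E]{BDPR}, which makes all homoclinic classes coincide on an open and dense set. With this correction your argument is exactly the paper's.
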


\begin{mrem}
By \cite[Proposition 1.4]{C}, for diffeomorphisms in  $\cZ(M)$ 
every 
hyperbolic ergodic measure $\mu$ is the weak$\ast$ limit of 
periodic measures supported on points whose orbits 
tend (in the Hausdorff topology) to the support of the measure $\mu$. 
Thus, Corollary~\ref{c.averages} holds after replacing the hypothesis 
$\mu_{\cO(p)}  (\varphi)>0> 
\mu_{\cO(q)}  (\varphi)$
by the existence of
two hyperbolic ergodic measures $\nu^+$ and $\nu^-$ such that
$\int \varphi \, d\nu^+>0>\int \varphi\, d\nu^-$. 
\end{mrem}

\subsection{Organization of the paper}\label{ss.organization}
In Section~\ref{s.criterion} we introduce the concepts involved in the criterion of
control at any scale with a long sparse tail and prove
Theorem~\ref{t.accumulation}.
In Section~\ref{s.patterns}
we introduce the notion of a pattern and 
 see how they are induced by long tails  of  scales.
We study the {\emph{concatenations of plaques}} of  flip-flop families (associated to a map $\varphi$) and the
  control of the averages of $\varphi$ corresponding to these concatenations, see  Theorem~\ref{t.p.flipfloppattern}.
In Section~\ref{s.flipfloptail} we prove Theorem~\ref{t.flipfloptail}, 
Corollary~\ref{c.flipfloptail}, and Proposition~\ref{p.r.hyperboliclike}.
In Section~\ref{s.flip-flophomoclinic} we prove
Theorem~\ref{t.homoclinic} involving 
flip-flop families and homoclinic relations.
In Section~\ref{s.flipflopph} we review some key ingredients as 
dynamical blenders and
flip-flop configurations and
prove Theorems~\ref{t.cycle} and \ref{t.ctail}. Finally, in Section~\ref{s.applications} we apply our methods to construct nonhyperbolic ergodic measures with full support for some robustly transitive diffeomorphisms, proving Theorem~\ref{t.c.openanddense}
and  Corollary~\ref{c.averages}.

\section{A criterion for zero averages: control at any scale up 
to a long  sparse tail}
\label{s.criterion}

The construction that we present  for controlling averages is probably too rigid but it is enough to achieve our goals and certain constraints perhaps could be relaxed. However, at this state of the art, we do not aim for full generality but prefer to present the ingredients of the construction in a  simple as possible way. One may aim to extract a general conceptual principle behind the construction, but this is beyond the focus of this paper.
In Sections~\ref{ss.scalesandtails} and \ref{ss.controlatanyscale} we introduce the concepts involved in  the criterion for controlling averages and in Section~\ref{ss.proofoftaccumulation} we prove Theorem~\ref{t.accumulation}.


\subsection{Scales and long  sparse tails}
\label{ss.scalesandtails}
In what follows we introduce the definitions 
of scales and long sparse tails.

\begin{defi}[Scale]\label{d.scale}
A sequence $\cT=(T_n)_{n\in \NN}$ of strictly positive
natural numbers is called \emph{a scale} if
  there is a sequence $\bar \kappa=(\kappa_n)_{n\ge 1}$  
  (the \emph{sequence of factors} of the scale)
  of natural numbers with
  $\kappa_{n}\ge 3$ for every $n$ such  that
 \begin{itemize} 
  \item
   $T_{n}=\kappa_{n} \, T_{n-1}$ for every $n\ge 1$;
  \item
  $\kappa_{n+1}/\kappa_n \to \infty$.
\end{itemize}
We assume that
the number $T_0$, and hence every $T_n$, is a multiple of $3$.

\end{defi}

We now introduce some notation.
In what follows, given $a,b\in \RR$ we let
$$
[a,b]_{\NN}\eqdef[a,b]\cap \NN.
$$
Given a subset $\MM$ of $\NN$  
a \emph{component} of $\MM$ is an interval of integers 
$[a,b]_{\NN}\subset \MM$ such that $a,b\in \MM$ and $a-1, b+1\not\in \MM$.

\begin{defi}[Controling sequence]\label{d.controllingsequence}
Let $\bar\varepsilon=(\varepsilon_n)_{n\in\NN}$ be a sequence of positive numbers converging to $0$.  
We say that $\bar\varepsilon$ is a \emph{controlling sequence} if 
$$
\sum_n \varepsilon_n<+\infty
\quad 
\mbox{and}
\quad
\prod_n (1-\varepsilon_n) >0.$$
 \end{defi}
 
\begin{rema}\label{r.basicfact} 
For a sequence $(\varepsilon_n)_{n\in\NN}$ of numbers with $\epsilon_n\in(0,1)$  one has 
$$
\sum_n \varepsilon_n<+\infty\quad   \Longleftrightarrow \quad \prod_n (1-\varepsilon_n) >0.
$$
\end{rema}

\begin{rema}\label{r.controling}
Let
 $\cT=(T_n)_{n\in \NN}$  be a scale, $\kappa_{n+1}=\frac{T_{n+1}}{T_n}$, and 
 $\varepsilon_n =\frac{2}{\kappa_n}$.
 Then the sequence
 $(\varepsilon_n)_{n\in \NN}$ is a controlling one.
\end{rema}

\begin{defi}[Long sparse tail]
\label{d.tail} 
Consider a scale
$\cT=(T_n)_{n\in \NN}$ and a controlling sequence 
$\bar \varepsilon=(\varepsilon_n)_{n\in \NN}$. 
A  set $R_\infty\subset \NN$ is a \emph{$\cT$-long $\bar\varepsilon$-sparse tail} if
the following properties hold:

\begin{enumerate}
\item[a)]\label{i.component} Every component of $R_\infty$ is of the form $[k\,T_n, (k+1)\, T_n-1]_{\NN}$, for some $k$ and $n$ (we say that such a component has size $T_n$).
\end{enumerate}
Let $R_n$ be the union of the components of $R_\infty$ of size $T_n$ and let
$$
R_{n,\infty}\eqdef \bigcup_{i\ge n} R_i,
$$
the union of the components of $R_\infty$ of size larger than or equal to $T_n$. 
\begin{enumerate}
\item[b)]\label{i.0} 
$0\notin R_\infty$, in particular 
$[0,T_n-1]_{\NN} \not\subset R_n.$
\item[c)]\label{i.center}  
Consider  an  interval $I$ of natural numbers of the form
$$
I=[kT_n,(k+1) T_n -1]_{\NN},
\quad \mbox{for some $n\ge 1$ and $k\geq 0$,}
$$  
that  is not contained in any component of  $R_\infty$ then the following properties hold:
\begin{itemize}
\item{\emph{center position:}}
$$
\left[ kT_n, kT_n +\frac{T_n}{3}\right]\cap R_{n-1}=\emptyset=
\left[\big((k+1)T_n-1\big)-\frac{T_n}{3}, \big( (k+1)T_n-1\big)\right]\cap R_{n-1}.
$$
\item {\emph{$\bar\varepsilon$-sparseness:}} 
$$
0< \frac{ \#( R_{n-1}\cap I)}{T_n}<\varepsilon_n.
$$
\end{itemize}
\end{enumerate}
\end{defi}

The conditions in Definition~\ref{d.tail} are depicted in Figure~\ref{f.dlongsparsetail}.

\begin{figure}
\begin{minipage}[h]{\linewidth}
\centering
 \begin{overpic}[scale=.75, 
 ]{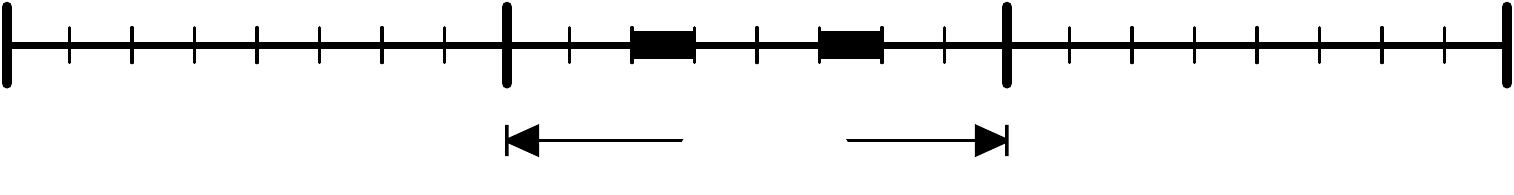}
 			\put(-2,4){\small$\kappa\, T_n$}
 			\put(92,4){\small$(\kappa+1)\, T_n$-1}
  			\put(41,6){\small$R_{n-1}$}
			\put(53,6){\small$R_{n-1}$}
 			\put(48.5,2){\small$\frac{T_n}3$}
  \end{overpic}
\caption{A long  sparse tail}
\label{f.dlongsparsetail}
\end{minipage}
\end{figure}

\begin{defi}[Good and bad intervals]\label{d.goodandbad}
With the notation of Definition~\ref{d.tail},
 an interval $I$ of the form $I=[kT_n,(k+1)T_n-1]_{\NN}$ 
 is called  \emph{$n$-bad} if $I\subset R_{n,\infty}\subset R_\infty$. 
 The interval is called \emph{$n$-good} if 
 $I\cap R_{n,\infty}=\emptyset$.
\end{defi}

\begin{rema}[On the definition of a $\cT$-long $\bar\epsilon$-sparse tail]
\label{r.commentson}
$\,$
\begin{enumerate}
\item It is assumed that $0\notin R_\infty$.  
This implies that for every $n\geq 0$ the initial interval $[0,T_n-1]_{\NN}$ is not a component of $R_\infty$ of size $T_n$. 
Therefore $[0,T_n-1]$ is disjoint from $R_n$ and thus from $R_m$ for every $m\geq n$.  
In other words, the interval  $[0,T_n-1]$ is $n$-good, that is,
$$
[0,T_n-1]\cap R_{n,\infty}=\emptyset.
$$
\item
Let  $I=[kT_n,(k+1) T_n -1]$ be an interval as in Item~(c) of Definition~\ref{d.tail}.  By  Item~(a) the interval
$I$  is either contained in a component of $R_\infty$
whose size is larger than or equal to  $T_n$ or is disjoint from $R_{n,\infty}$.  Thus, Item~(c)
considers the case where the interval is disjoint from $R_{n,\infty}$.

Now  any component of $R_\infty$ of size less than $T_n$ is either disjoint from $I$ or contained in $I$: 
just note that such a component has  length $T_m$, $m<n$,  and starts 
at a multiple of $T_m$ and   $T_n$ is a multiple of $T_m$. 

Item~(c)     describes the position and quantity  of the components of size $T_{n-1}$ in 
the interval $I$.  For that, one splits the interval 
$I$ into tree parts of equal length $\frac{T_n}{3}$. The following properties are required:
\begin{itemize}
\item 
Every component of size $T_{n-1}$ contained in $I$ is contained in the middle third interval;
\item 
The middle third interval contains at least one component of size $T_{n-1}$. Thus the intersection 
$I\cap R_{n-1}$ is not empty, but the the set $R_{n-1}$ has  a
 small density in the interval $I$  that is upper bounded by $\epsilon_n$.
 \end{itemize}
\item  
 Item~(c)
 does not consider  the case  $n=0$.  
  For $n=0$  and an interval $I$ of the form $[kT_0,(k+1)T_0-1]$ there are two possibilities: 
 either $I$ is a component of $R_\infty$ 
 (i.e., contained in $R_0$) or $I$ is disjoint from $R_\infty$. 
 \item  
 Given  any interval $I$ of the form $I=[kT_n,(k+1)T_n-1]_{\NN}$ there are two possibilities:
 \begin{itemize}
 \item either $I\subset R_\infty$ and then $I \subset R_{n,\infty}$ and $I$ is $n$-bad;
 \item or $I\cap R_{n,\infty}=\emptyset$ and then $I$ is  $n$-good.
 \end{itemize}
 \end{enumerate}
\end{rema}

The definition of a long  sparse tail 
involves many properties and conditions, thus its existence it is not obvious.
We solve this difficulty in the next lemma.

\begin{lemm}[Existence of long sparse tails]\label{l.tailexistence}
Consider a scale
 $\cT=(T_n)_{n\in \NN}$
 and its sequence of factors  $\bar \kappa=(k_n)_{n\ge 1}$.
Write
   $\varepsilon_n =\frac{2}{\kappa_n}$
   and let $\bar\epsilon=(\epsilon_n)_{n\ge 1}$.
 Then there is a $\cT$-long 
$\bar\varepsilon$-sparse tail $R_\infty$.
\end{lemm}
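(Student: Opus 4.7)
The plan is to produce $R_\infty$ by an explicit combinatorial construction based on a mixed-base digit representation of positions. First I would choose, for every $n\ge 1$, an integer $p_n\in\{1,\dots,\kappa_n-2\}$ placed so that the sub-block of relative index $p_n$ inside any level-$n$ block $[kT_n,(k+1)T_n-1]$ lands in the middle third in the sense of Definition~\ref{d.tail}; such an interior choice exists because $\kappa_n\ge 3$ (and because $T_n$ is a multiple of $3$, so the thirds are integer-aligned). Given $m\in\NN$, write the ``digits'' $a_n(m)\eqdef\lfloor m/T_n\rfloor\bmod\kappa_{n+1}$, and note that $a_n(m)=0$ for all $n$ sufficiently large. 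I would then define
\[
R_\infty \eqdef \{m\in\NN : a_n(m)=p_{n+1}\text{ for some }n\ge 0\}.
\]
For $m\in R_\infty$ let $n(m)$ be the largest index $n$ with $a_n(m)=p_{n+1}$ (which exists since eventually $a_n(m)=0\ne p_{n+1}$); the component of $R_\infty$ at $m$ will be the block $B_{n(m),k(m)}\eqdef[\lfloor m/T_{n(m)}\rfloor T_{n(m)},(\lfloor m/T_{n(m)}\rfloor+1)T_{n(m)}-1]$.

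The three conditions of Definition~\ref{d.tail} would then be checked as follows. Condition (b) is immediate since $a_n(0)=0\ne p_{n+1}$ for every $n$. For (a), inside a block $B_{n,\nu}=[\nu T_n,(\nu+1)T_n-1]$ every point shares the same higher digits $(a_j)_{j\ge n}$, hence the same value of $n(\cdot)$ and the same putative component, while the two adjacent points $\nu T_n-1$ and $(\nu+1)T_n$ must be shown to escape $R_\infty$: a direct calculation shows that any ``carry'' in their digit expansions replaces the affected digit with $0$ or $\kappa_{j+1}-1$, neither of which equals $p_{j+1}\in\{1,\dots,\kappa_{j+1}-2\}$. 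For (c) one first observes that a block $B_{n,k}$ is $n$-good precisely when the tail $(a_j(kT_n))_{j\ge n}$ avoids all the $p_{j+1}$; inside such a block, the unique level-$(n-1)$ sub-block satisfying the marking rule is the one at relative position $p_n$, giving $\#(R_{n-1}\cap B_{n,k})=T_{n-1}$ and therefore density $1/\kappa_n<2/\kappa_n=\varepsilon_n$, and this sub-block sits in the middle third by the choice of $p_n$.

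The main technical obstacle will be the bookkeeping behind condition (a), namely ruling out the possibility that $B_{n(m),k(m)}$ merges with a marked block at a different scale sharing an endpoint, which would produce a component of forbidden shape. This is exactly where the \emph{interior} requirement $p_n\in\{1,\dots,\kappa_n-2\}$ is used: it prevents the carry-over from the boundary of $B_{n(m),k(m)}$ from ever landing on the marking value at any higher level, so the two neighbouring positions are guaranteed to lie outside $R_\infty$. Once this point is handled, the remaining verifications reduce to routine manipulations of the digit sequences $a_n(\cdot)$.
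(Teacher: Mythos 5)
Your construction is correct and is essentially the paper's: both place exactly one level-$(n-1)$ component near the middle of every good level-$n$ block, recursively, and you merely package the resulting set as a closed-form mixed-radix digit condition ($m\in R_\infty$ iff some digit hits the marked value $p_{j+1}$), whereas the paper builds $R_\infty\cap[0,T_n-1]$ by induction on $n$ and downward induction on levels. The only caveat --- shared with the paper's own construction, which places components at the possibly non-$T_j$-aligned offset $T_{j+1}/3$ --- is that for small factors (e.g.\ $\kappa_n\in\{3,4\}$) no $T_{n-1}$-aligned sub-block lies strictly inside the middle third, so the ``center position'' condition holds only up to the obvious endpoint/alignment slack; this does not affect how the lemma is used later.
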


\begin{proof} 
First  note that, by  
Remark~\ref{r.controling}, the sequence
$\bar \varepsilon$ is a controlling one. 

The construction of the set  $R_\infty$ is done inductively.
For each  $n\in\NN$ we define 
the intersection of the set  $R_{\infty}$ with the intervals $[0,T_n-1]$. 
We denote such an intersection by $R_{\infty}(T_n)$.

For $n=0$,  we let $R_{\infty}(T_0)=\emptyset$. 
Fix now $n>0$ and suppose that the sets $R_{\infty}(T_{n-1})$ has been constructed satisfying (in 
restriction to the interval
$[0,T_{n-1}-1]$) the properties in Definition~\ref{d.tail}. We now proceed to define the set $R_{\infty}(T_n)$.

For any $i\le n-1$ we denote by $R_{i,n-1}$ the union of the components of 
$R_{\infty} (T_{n-1})$ of 
length $T_i$. 
We next define the family of subsets  $\{R_{j,n}, \, j=0,\dots, n\}$ of $[0,T_n-1]$  by 
 decreasing induction on  $j$ as follows. We let
$$
R_{n,n}=\emptyset \qquad \mbox{and}  \qquad
R_{n-1,n}= \left[ \frac{T_n}{3}, \frac{T_n}{3}+T_{n-1}-1 \right]_{\NN}.
$$
Let $j<n$ and assume that the sets $R_{i,n}$ are defined for every $n\ge i>j$. The set
$R_{j,n}$ is defined as follows:
\begin{itemize}
\item if
 $[kT_{j+1}, (k+1)T_{j+1}-1]_{\NN} \subset \bigcup_{i>j} R_{i,n}$ then 
$$
R_{j,n}\cap [kT_{j+1}, (k+1)T_{j+1}-1]=\emptyset,
$$ 
 \item Otherwise we let
\begin{equation}\label{e.Rjn}
R_{j,n}\cap [kT_{j+1}, (k+1)T_{j+1}-1]_{\NN}  =\left[   \Big( k+\frac{1}{3} \Big) T_{j+1}, 
\Big(k+\frac{1}{3} \Big) T_{j+1} +T_j-1 
\right]_{\NN}.
\end{equation}
\end{itemize}
Note that by construction,
$$
R_{\infty}(T_n)=\bigcup_{i=0}^n R_{i,n}.
$$

\begin{clai} The set $R_{\infty}(T_n)$ satisfies (in restriction to 
the interval $[0,T_n-1]$) the conditions of Definition~\ref{d.tail}. 
\end{clai}
\begin{proof} 
Property (a) in the definition follows from the construction: 
the components of $R_{i,n}$ have size $T_i$ and 
have no adjacent points with the components of  $\bigcup_{j>i}^n R_{j,n}$. 

For Property (b) one checks inductively that $O\notin R_{i,n}$ for every $i$ and $n$. 

Property (c)  is a consequence of \eqref{e.Rjn}. If  the set
$R_{i,n}$  intersects  a segment $[k T_{i+1},(k+1)T_{i+1}-1]$
then it is  contained in its middle third interval, implying the center position condition.
For the sparseness note that by construction and the definition of $\varepsilon_i$, 
for each $i$ it holds 
$$
0< \frac{ \#( R_{i-1}\cap I)}{T_i}= \frac{T_{i-1}}{T_{i}}=\frac{1}{\kappa_i}
<\varepsilon_j.
$$
This completes the proof of the claim. 
%
\end{proof}

Our construction also provides immediately the following properties:
For every $i<n$ it holds:
\begin{itemize}
\item
if $m\ge n$ then
$R_{i,m}\cap [0,T_n-1]= R_{i,n}$,
\item if $m\geq n$ the $R_{\infty}(T_m)\cap [0,T_n-1]=R_{\infty}(T_n)$, and
\item
$R_{i,n}\subset R_{i,n+1}$.
\end{itemize}
The tail is now defined by
\begin{equation}
\label{e.RinftyRi}
R_\infty\eqdef \bigcup_{i=0}^\infty R_i, \quad \mbox{where} \quad
R_i= \bigcup _{n>i} R_{i,n}.
\end{equation}
By construction, the set $R_{\infty}$ is an $\varepsilon$-sparse tail of $\cT$.
\end{proof}

\subsection{Control at any scale up a long sparse tail}
\label{ss.controlatanyscale}
In this section we give the definition of controlled points.

\begin{defi}
\label{d.alphacontrol}
Let $X$ be a compact set, $f\colon X\to X$ a homeomorphism,
 and $\varphi \colon X\to \RR$
a continuous map. Consider 
\begin{itemize}
\item
a scale $\cT$, a controlling sequence $\bar\varepsilon$, and a 
$\cT$-long  $\bar\varepsilon$-sparse tail  $R_\infty$;
\item 
decreasing sequences of positive numbers $\bar{\delta}=(\delta_n)_{n\in\NN} $ and $\bar{\alpha}=(\alpha_n)_{n\in\NN}$, 
converging to $0$. 
\end{itemize} 

The $f$-orbit of a  point $x\in X$ is \emph{$\bar\delta$-dense along the tail $R_\infty$} if 
for every component $I$ of $R_\infty$ of length $T_n$ 
the segment of orbit $\{f^i(x), \, i\in I\}$ is $\delta_n$-dense in $X$.

The \emph{Birkhoff averages of $\varphi$ along the orbit of $x$ are 
$\bar\alpha$-controlled
for the scale $\cT$ 
with the 
tail $R_\infty$} if
for every interval 
$$
I=[k \, T_n, (k+1) \, T_n-1]_\NN
$$ 
such that
$I\not \subset R_{n+1,\infty}$  
(i.e., $I$ is either  $n$-good or is a component of $R_n$) it holds
$$
\varphi_I(x)\eqdef
\frac{1}{T_n} \sum_{i\in I} \varphi (f^i(x)) \in [-\alpha_n,\alpha_n].
$$
\end{defi}

\begin{defi}\label{d.controledtail}
Let $X$ be a compact set, $f\colon X\to X$ a homeomorphism,
 and $\varphi \colon X\to \RR$
a continuous map.
 

A point $x\in X$ is \emph{controlled  at any scale with a long sparse tail
with respect to $X$ and $\varphi$} if there are a scale  $\cT$, a controlling sequence $\bar\varepsilon$, a $\cT$-long
$\bar\varepsilon$-sparse tail $R_\infty$, and 
sequences of positive numbers   $\bar\delta$ and $\bar\alpha$ converging to $0$,  such  that
\begin{itemize}
\item
the $f$-orbit of  
$x$ is $\bar\delta$-dense along the tail $R_\infty$ and 
\item
the Birkhoff averages of $\varphi$ along the orbit of $x$ are 
$\bar\alpha$-controlled for the scale $\cT$ 
with the 
tail $R_\infty$.
\end{itemize}
In this definition we say that $\bar \delta$ is the \emph{density forcing sequence},
$\bar \alpha$ is the \emph{average  forcing sequence,} and the
point $x$ is \emph{$(\bar\delta,\bar \alpha, \bar\epsilon, \mathcal{T}, R_\infty)$-controlled.}
\end{defi}

%
%

\subsection{Proof of Theorem \ref{t.accumulation}}
\label{ss.proofoftaccumulation}
In this section we prove 
Theorem~\ref{t.accumulation}, thus we use the assumptions and the notations in its statement.
Consider a point  $x_0\in X$  that is controlled  at any scale with a long sparse tail
for  $X$ and $\varphi$. Let 
\begin{itemize}
\item
$\cT=(T_n)_{n\in \NN}$ be the scale;
\item
$R_\infty$ the $\cT$-long  $\bar \varepsilon$-sparse tail; and
\item 
$\bar \delta=(\delta_n)_{n\ge 1}$ the density forcing sequence and
 $\bar \alpha=(\alpha_n)_{n\ge 1}$ the average forcing sequence. 
 \end{itemize}

Let $\mu$ be a measure that  
is a weak$\ast$ limit of the empirical measures $(\mu_n(x_0))_{n\in \NN}$.
As $x_0$ remains fixed let us write $\mu_n\eqdef \mu_n(x_0)$.
We need to prove that for $\mu$-almost every point $x$ it holds:
 \begin{enumerate}
\item \label{i.ass1}
the forward orbit of $x$ is dense in $X$
 \item \label{i.ass2}
 the Birkhoff averages of $x$ satisfy
 $\lim_{n\to\infty}  \frac{1}{n} \sum_{i=0}^{n-1} \varphi (f^i(x))=0$.
 \end{enumerate}
Proposition~\ref{p.l.deltadensity}  below
immediately implies   item~\eqref{i.ass1} 
(item \eqref{i.ass2} follows from Proposition~\ref{p.l.alpha}).

\begin{prop}\label{p.l.deltadensity}
Under the assumptions above, for every $k$ 
the (forward) orbit of
 $\mu$-almost every point 
 is $2\,\delta_k$-dense in $X$.
\end{prop}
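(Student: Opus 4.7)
The plan is two-fold: first use the $\bar\delta$-density of the orbit of $x_0$ along $R_\infty$ to secure a uniform-in-$y$ lower bound on the $\mu$-mass of every small closed ball, and then combine this with Birkhoff's ergodic theorem and a $\delta_k$-net argument to conclude orbit density $\mu$-almost everywhere.

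For the mass estimate, fix $y\in X$ and observe that along every component $I$ of $R_\infty$ of size $T_m$ with $m\geq k$, the orbit segment $\{f^j(x_0):j\in I\}$ is $\delta_m$-dense, hence $\delta_k$-dense (since $\delta_m\leq\delta_k$), and in particular meets $\overline{B(y,\delta_k)}$. The recursive construction in the proof of Lemma~\ref{l.tailexistence} guarantees on the order of $T_N/T_{k+1}$ such components inside $[0,T_N-1]$ for large $N$, giving the uniform estimate
\[
\mu_{T_N}(x_0)\bigl(\overline{B(y,\delta_k)}\bigr)\geq c_k>0
\]
with $c_k$ independent of $y$. Passing to a weak$*$-convergent subsequence $\mu_{n_j}\to\mu$ and invoking the Portmanteau theorem for closed sets yields $\mu\bigl(\overline{B(y,\delta_k)}\bigr)\geq c_k$ for every $y\in X$.

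Next, cover $X$ by a finite $\delta_k$-net $\{y_1,\ldots,y_N\}$. By the triangle inequality, an orbit meeting each $\overline{B(y_i,\delta_k)}$ is $2\delta_k$-dense in $X$, so it suffices to show that for every $i$, $\mu$-almost every $x$ has forward orbit visiting $\overline{B(y_i,\delta_k)}$. Applying Birkhoff's theorem to $\chi_i:=\chi_{\overline{B(y_i,\delta_k)}}\in L^1(\mu)$ produces the $f$-invariant Birkhoff average $h_i(x)=\lim_n\tfrac{1}{n}\sum_{j=0}^{n-1}\chi_i(f^jx)$, which is defined $\mu$-almost everywhere, satisfies $\int h_i\,d\mu\geq c_k>0$, and is such that on $\{h_i>0\}$ the orbit of $x$ necessarily visits $\overline{B(y_i,\delta_k)}$.

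The hard part will be upgrading the integrated positivity $\int h_i\,d\mu\geq c_k$ to the pointwise positivity $h_i>0$ holding $\mu$-almost everywhere. My plan is a contradiction argument: if $A_i:=\{h_i=0\}$ had positive $\mu$-mass, the restricted invariant probability $\mu|_{A_i}/\mu(A_i)$ would charge no $\overline{B(y_i,\delta_k)}$, yet the lower bound $c_k$ of the first step depends only on the self-similar scale structure of $R_\infty$ and the $\bar\delta$-density, not on the particular weak$*$ limit $\mu$; so it should propagate to any invariant sub-measure arising in this fashion, producing the desired contradiction. Making this propagation precise -- likely by a refined scale-by-scale accounting that separates contributions of tail components at different levels $T_m$ and exploits the $\bar\varepsilon$-sparseness to keep the bound uniform across scales -- is the main technical obstacle I anticipate.
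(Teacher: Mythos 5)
There is a genuine gap, and it is exactly where you anticipate it: the passage from $\int h_i\,d\mu\geq c_k$ to $h_i>0$ $\mu$-almost everywhere. A uniform lower bound $\mu(\overline{B(y,\delta_k)})\geq c_k$ for all $y$ is simply too weak to imply that almost every orbit is dense: take $\mu=\tfrac12(\delta_p+\delta_q)$ for two fixed points $p,q$ with $\delta_k$ large enough that every ball of radius $\delta_k$ contains $p$ or $q$; every ball has mass $\geq 1/2$, yet no generic orbit is $2\delta_k$-dense. Your proposed rescue --- restricting $\mu$ to the invariant set $A_i=\{h_i=0\}$ and ``propagating'' the lower bound $c_k$ to $\mu|_{A_i}/\mu(A_i)$ --- does not work, because that lower bound is a property of weak$*$ limits of the empirical measures of the specific controlled point $x_0$, and the normalized restriction of $\mu$ to an invariant subset is in general \emph{not} such a limit. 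Equivalently, $c_k$ bounds only the average of $\nu(\overline{B(y_i,\delta_k)})$ over the ergodic components $\nu$ of $\mu$, and nothing in your first step prevents some components from giving the ball zero mass. No ``refined scale-by-scale accounting'' of the tail can fix this, because the obstruction lies entirely on the measure-theoretic side.

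The paper's proof circumvents this by never reducing to the mass of balls. It introduces the sets $X(t,\delta)=\{x:\{x,\dots,f^t(x)\}\ \text{is $\delta$-dense}\}$ and shows, using the structure of the tail, that the set of times $i$ with $f^i(x_0)\notin X(t,\delta_k)$ is contained in $\bigcup_j\bigl(R_{m_t+j}\cup(R_{m_t+j}-T_{k+1})\bigr)$, whose upper density tends to $0$ as $t\to\infty$ by the $\bar\varepsilon$-sparseness. Hence $\liminf_n\mu_n(X(t,\delta_k))\to 1$, and since the interior of $X(t,2\delta_k)$ contains the closure of $X(t,\delta_k)$, this lower bound passes to the weak$*$ limit: $\mu(X(t,2\delta_k))\to 1$. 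Membership in some $X(t,2\delta_k)$ already forces the forward orbit to be $2\delta_k$-dense, pointwise and with no appeal to the ergodic theorem. The lesson is that you must push the \emph{finite-time density property itself} (an almost-open condition) through the weak$*$ limit, rather than only the masses of balls.
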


\begin{proof}
Fix $k$.
For any given $t>0$ and  $\delta>0$ consider the set 
$$
X(t,\delta)\eqdef 
\Big\{x\in X \colon \{x, \dots ,f^t(x)\} \, \mbox{is $\delta$-dense in $X$} \Big\}
$$
and let
 $$
 P_{\infty,t} \eqdef \liminf_{n\to\infty} P_{n,t}, \quad
 \mbox{where} \quad
P_{n,t}\eqdef \mu_n(X(t,\delta_k)).
$$

\begin{lemm}\label{l.c.pinfty}
$\lim_{t\to \infty} P_{\infty,t}=1.$
\end{lemm}

We postpone the proof of this lemma
and deduce the proposition from it. Just
note  that the interior of $X(t,2\delta_k)$ contains
the closure of $X(t,\delta_k)$ for every $t$. Thus
$\mu (X(t,2\delta_k))\ge
P_{\infty,t}$. Taking the limit when $t\to \infty$ we prove the proposition.
\end{proof}

\begin{proof}[Proof of Lemma~\ref{l.c.pinfty}]
Fixed $k$ take $t>T_{k+1}$.

\begin{clai} \label{cl.biblioteca}
 The set of times $i\in\NN$ such that  $f^i(x_0)\not\in X(t,\delta_k)$ is contained in the set 
$$
\bigcup_{j=0}^\infty \Big( R_{m_t+j}\cup (R_{m_t+j}-T_{k+1})\Big),
$$
where 
\begin{itemize}
\item
 $m_t\eqdef \inf \{ m \ge k+1 \colon T_m+2\,T_{k+1}>t\}$  
 and
\item 
 $R_{k+j}-T_{k+1}\eqdef \{ \ell =i-T_{k+1}, \,\, \mbox{where $i\in R_{k+j}$}\}$.
 \end{itemize}
\end{clai}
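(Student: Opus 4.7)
The plan is to argue by contrapositive: assuming $i \notin R_{m_t, \infty}$ and $i + T_{k+1} \notin R_{m_t, \infty}$, I would show that $f^i(x_0) \in X(t, \delta_k)$, i.e., the orbit segment $\{f^i(x_0), \ldots, f^{i+t}(x_0)\}$ is $\delta_k$-dense in $X$. The idea is to find a component of $R_\infty$ of length $T_n$ with $n \geq k$ entirely contained in $[i, i+t]_{\NN}$; by the $\bar\delta$-density of the orbit of $x_0$ along the tail, the orbit segment over this component would then be $\delta_n$-dense, hence $\delta_k$-dense by monotonicity of $\bar\delta$, and $f^i(x_0) \in X(t, \delta_k)$ would follow.

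First I would establish that the integer interval $[i, i+T_{k+1}]_{\NN}$ is disjoint from $R_{m_t, \infty}$. Every component of $R_\infty$ belonging to $R_{m_t, \infty}$ has length at least $T_{m_t} \geq T_{k+1}$; if such a component met $[i, i+T_{k+1}]_{\NN}$, then, since neither endpoint of this interval lies in $R_{m_t, \infty}$ by hypothesis, the component would have to sit in $\{i+1, \ldots, i+T_{k+1}-1\}$, a set of only $T_{k+1}-1$ elements, contradicting its length. A direct consequence is that any $m_t$-block meeting $[i, i+T_{k+1}]_{\NN}$ is $m_t$-good in the sense of Definition~\ref{d.goodandbad} (being $m_t$-bad would force the block to be contained in $R_{m_t, \infty}$, incompatible with the meeting point).

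Next I would locate inside $[i, i+t]_{\NN}$ a component $C$ of $R_\infty$ of length $T_n$ with $n \geq k$. The bound $T_{m_t} + 2 T_{k+1} > t$ from the definition of $m_t$ confines $[i, i+t]_{\NN}$ to the union of a few consecutive $m_t$-blocks, and the leading ones are $m_t$-good by the previous paragraph. By Property~(c) of Definition~\ref{d.tail}, the middle third of a good $n$-block contains a component of $R_\infty$ of length $T_{n-1}$; iterating this observation inside good sub-blocks of decreasing size (down to at most scale $k+1$), I would descend to a scale $n \geq k$ at which some middle-third component actually fits inside $[i, i+t]_{\NN}$, producing the desired $C$.

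Once such $C \subset [i, i+t]_{\NN}$ of length $T_n$ with $n \geq k$ is exhibited, the $\bar\delta$-density hypothesis and monotonicity of $\bar\delta$ yield $\{f^j(x_0) : j \in C\}$ is $\delta_n$-dense, hence $\delta_k$-dense, in $X$; so is $\{f^i(x_0), \ldots, f^{i+t}(x_0)\}$, and the contrapositive is proved. The main technical obstacle is the location of $C$: depending on the alignment of $i$ and $i + T_{k+1}$ with the $m_t$-block boundaries, one must choose the right scale $n$ and the right good block so that an entire middle-third component sits inside $[i, i+t]_{\NN}$, a combinatorial check in which the hypothesis on $i + T_{k+1}$, and not just $i$, provides the essential slack (this is exactly what the shift $-T_{k+1}$ in the statement of the claim encodes).
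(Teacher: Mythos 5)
Your contrapositive set-up and your first step are sound: from $i,\ i+T_{k+1}\notin R_{m_t,\infty}$ and the fact that every component of $R_{m_t,\infty}$ has length at least $T_{m_t}\ge T_{k+1}$, one does get $[i,i+T_{k+1}]_{\NN}\cap R_{m_t,\infty}=\emptyset$, and the target you then set — exhibiting a \emph{full} component of $R_\infty$ of size $T_n$, $n\ge k$, inside $[i,i+t]_{\NN}$ — would indeed finish the proof. The gap is in the mechanism you propose for reaching that target. Your hypothesis controls only $R_{m_t,\infty}$; it says nothing about $R_{k+1},\dots,R_{m_t-1}$ near $i$. So although the $m_t$-block containing $i$ is $m_t$-good, the $n$-blocks containing $i$ (or $i+T_{k+1}$) for $k+1\le n<m_t$ may perfectly well be $n$-bad: $i$ can sit strictly inside a component of intermediate size $T_\ell$ with $k+1\le\ell<m_t$. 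In that situation there is no ``good sub-block containing $i$'' at scales $\le\ell$, so Property~(c) cannot be iterated downward along the blocks containing $i$; and at the top scale the component that Property~(c) places in the middle third of the good $m_t$-block is useless, because $[i,i+t]$ can be a short, arbitrarily placed sub-interval of that block (one only knows $t<T_{m_t}+2T_{k+1}$, while $t$ can be as small as roughly $T_{m_t-1}+2T_{k+1}\ll T_{m_t}$). Your sketch defers exactly this situation to ``a combinatorial check,'' but it is the entire content of the claim, and the check requires an idea that is absent from the proposal.

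For comparison, the paper argues directly and globally: if $[i,i+t]$ contained no component of size $\ge T_k$, every full $(k+1)$-block inside it would have to be $(k+1)$-bad and — since distinct components are never adjacent and none fits inside $[i,i+t]$ — all of them would lie in a \emph{single} component $J$ overhanging $[i,i+t]$ by less than $T_{k+1}$ on each side; this forces $|J|=T_{m'}$ with $T_{m'}+2T_{k+1}>t$, hence $m'\ge m_t$, and forces $i\in J\cup(J-T_{k+1})$. If you want to keep your contrapositive route, the repair is a bottom-up case analysis on the component $C$ containing $i$ rather than a top-down descent: if $|C|\ge T_{k+1}$ and $i=\min C$, then $C$ itself lies in $[i,i+t]$; if $|C|\ge T_{k+1}$ and $i>\min C$, then (components being non-adjacent) the $(k+1)$-block starting at $\max C+1$ is good, is contained in $[i,i+t]$ because $|C|\le T_{m_t-1}$ and $t\ge T_{m_t-1}+2T_{k+1}$, and its middle third contains a component of $R_k$; if $i$ lies in no component of size $\ge T_{k+1}$, the $(k+1)$-block following the one containing $i$ is either good or sits inside a component of size $T_{m'}$, $k+1\le m'<m_t$, entirely contained in $[i,i+t]$. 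None of these cases is produced by iterating Property~(c) from the scale $m_t$ downward, which is why the proposal as written does not prove the claim.
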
 
\begin{proof}
Take $i$ such that $f^i(x_0)\notin X(t,\delta_k)$. Then the set   $\{f^i(x_0),\dots, f^{i+t}(x_0)\}$ is not $\delta_k$-dense. 
Let $I=[i,i+t]_{\NN}$.
Recalling Definition~\ref{d.alphacontrol}, 
we have that $I$ can not contain any component of $R_\infty$ of size 
$T_k$ or greater than $T_k$.
This implies that 
\begin{itemize}
\item
the interval $I$ does not contain any $\ell$-bad interval for $\ell\ge k$,
\item
as a consequence of the sparseness property in item (c) of Definition~\ref{d.tail}, 
the interval $I$ does not contain any $(\ell+1)$-good interval for $\ell\ge k$
(i.e., disjoint from  or $R_{\ell+1,\infty}$).
\end{itemize}
Thus necessarily the interval $I$ intersects some bad interval
 $J=[r_m^-, r_m^+]_{\NN}\subset R_m$, $m>k$,
such that
$$
I\subset [r_m^--T_{k+1}, r_m^++T_{k+1}].
$$ 
Otherwise $I$ must contain a $(k+1)$-good interval.
Observe  that this implies that 
$$
T_m+2\,T_{k+1}>t,
$$
otherwise the segment of orbit $\{f^{i+j}(x_0)\}_{j=0}^{t}$ would be $\delta_k$-dense, a contradiction.
Hence $m\geq m_t$. 

Recall that $T_{k+1}<t$,  hence $i\in  [r_m^--T_{k+1}, r_m^+]$.
Thus 
$$
i\in J\cup (J-T_{k+1})\subset R_m\cup (R_m-T_{k+1})
$$
 for some $m\geq m_t$. This ends  the proof of the claim. 
\end{proof}

In view of Claim~\ref{cl.biblioteca}, to prove the  lemma 
it is enough to  see the following:

\begin{clai}\label{c.semnome}
 $$
 \lim_{t\to +\infty} \,\lim_{n\to+\infty} \frac1n \,
 \#
 \left(
 [0,n]\cap \bigcup_{j=0}^\infty \Big(  
 R_{m_t+j}\cup (R_{m_t+j}-T_{k+1})  \Big)
 \right)=0.
 $$
\end{clai}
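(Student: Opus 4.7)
The plan is to reduce the claim to a density estimate for the single set $R_{m_t,\infty}$ and then bound that density by a tail of a convergent series. First I would rewrite
\begin{equation*}
\bigcup_{j\ge 0}\bigl(R_{m_t+j}\cup (R_{m_t+j}-T_{k+1})\bigr)\;=\;R_{m_t,\infty}\cup(R_{m_t,\infty}-T_{k+1}),
\end{equation*}
and use shift invariance together with finite subadditivity of counting to obtain
\begin{equation*}
\frac{1}{n}\#\Bigl([0,n]\cap \bigl(R_{m_t,\infty}\cup(R_{m_t,\infty}-T_{k+1})\bigr)\Bigr)\;\le\; \frac{2}{n}\#\bigl([0,n+T_{k+1}]\cap R_{m_t,\infty}\bigr).
\end{equation*}
Since $(n+T_{k+1})/n\to 1$, the claim reduces to showing that $\limsup_{n\to\infty}\frac{1}{n}\#([0,n]\cap R_{M,\infty})\to 0$ as $M=m_t\to\infty$. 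Note $m_t\to\infty$: by definition $T_{m_t}>t-2T_{k+1}$ and $T_m\to\infty$.

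Next I would estimate $\#([0,n]\cap R_{M,\infty})$ for fixed $M\ge k+1$ and $n$ large. Set $j_n\eqdef\max\{j:T_j\le n\}$. The key structural remark, provided by the center-position clause of item~(c) of Definition~\ref{d.tail} (equivalently, the explicit construction \eqref{e.Rjn}), is that every component of $R_j$ lies in the middle third of some $T_{j+1}$-aligned block, so its starting position is $\ge T_{j+1}/3\ge T_j$ (using $\kappa_{j+1}\ge 3$). Three cases follow. For $j>j_n$, one has $T_j>n$ so $R_j\cap [0,n]=\emptyset$. For $j\le j_n-1$, the number of $T_{j+1}$-aligned blocks meeting $[0,n]$ is $\le n/T_{j+1}+1\le 2n/T_{j+1}$; each block carries at most one $R_j$-component of length $T_j$ (the sparseness bound $\#(R_j\cap I)<\varepsilon_{j+1}T_{j+1}=2T_j$ forbids a second one), so $\#(R_j\cap[0,n])\le 2n/\kappa_{j+1}$. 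For $j=j_n$, at most one $R_{j_n}$-component can meet $[0,n]$ (a second would start at $4T_{j_n+1}/3>n$), and its presence forces $n\ge T_{j_n+1}/3$, giving $\#(R_{j_n}\cap[0,n])\le T_{j_n}\le 3n/\kappa_{j_n+1}$.

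Summing these three contributions yields
\begin{equation*}
\frac{1}{n}\#\bigl([0,n]\cap R_{M,\infty}\bigr)\;\le\;\sum_{j=M}^{j_n-1}\frac{2}{\kappa_{j+1}}+\frac{3}{\kappa_{j_n+1}}\;\le\; 3\sum_{j\ge M+1}\frac{1}{\kappa_j}\;=\;\frac{3}{2}\sum_{j\ge M+1}\varepsilon_j.
\end{equation*}
Since $\bar\varepsilon$ is a controlling sequence one has $\sum\varepsilon_j<\infty$, so this tail vanishes as $M=m_t\to\infty$. Combined with the reduction of the first paragraph, this proves the claim.

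The main (but mild) obstacle is handling the boundary scale $j=j_n$. A naive enclosure of $[0,n]$ in $[0,T_N-1]$ for the next scale $T_N>n$ would lose a multiplicative factor $\kappa_N$, which can be arbitrarily large and renders the summed bound useless. The resolution is the elementary geometric observation above: the middle-third location of the unique $R_{j_n}$-component means that its presence in $[0,n]$ already forces $n\ge T_{j_n+1}/3$, which automatically restores the $1/\kappa_{j_n+1}$ density consistent with the other scales and makes the tail of $\sum 1/\kappa_j$ a uniform bound as $n$ varies.
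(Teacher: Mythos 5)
Your proof is correct and follows essentially the same route as the paper's: absorb the $-T_{k+1}$ shift at the cost of a bounded multiplicative factor, then bound the density of $R_{m_t,\infty}\cap[0,n]$ scale by scale, using the sparseness condition when $T_{j+1}\le n$ and the center-position condition at the boundary scale $j=j_n$, and finish with the tail of a convergent series. One caveat: you substitute $\varepsilon_{j+1}=2/\kappa_{j+1}$, which is the particular choice made in Lemma~\ref{l.tailexistence} but is not part of the general definition of a controlling sequence assumed in Theorem~\ref{t.accumulation}; your case analysis works verbatim for a general $\bar\varepsilon$ if you use the sparseness bound $\#(R_j\cap I)<\varepsilon_{j+1}T_{j+1}$ directly in place of the one-component-per-block count, which yields the paper's bound $3\sum_{j\ge m_t}\varepsilon_j$ and the same conclusion.
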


\begin{proof}
Note that the components of the set $R_{m_t+j}\cup (R_{m_t+j}-T_{k+1})$ are intervals of 
length $T_{m_t+j}+T_{k+1} < 2\, T_{m_t+j}$.  Thus  the claim is a direct consequence of 
next fact (recall the definition of $R_{m_t,\infty}$ in Definition~\ref{d.tail}).

\begin{fact}\label{f.c.proportion}
$$
\lim_{t\to \infty} \limsup_{n\to \infty} \dfrac{1}{n} \#\,  \big(R_{m_t,\infty} \cap [0,n] \big)\to 0.
$$
\end{fact}
\begin{proof}
We need to estimate the proportion
$$
\varrho(m,n)\eqdef \frac{\# \, \big( R_m\cap [0,n] \big) }{n}
$$
of the set $R_m$ in  $[0,n]_{\NN}$. We claim that 
$\varrho(m,n)< 3\,\varepsilon_{m+1}$. 
There are three
cases:
\begin{itemize}
\item 
$T_{m+1} \le n$:  Let $k\,T_{m+1}\le n< (k+1)\, T_{m+1}$, where $k\in \NN$ and $k\ge 1$.
By the sparseness condition we have
$$
\# \big( R_m \cap [0, (k+1)\, T_{m+1}] \big) < (k+1)\, \varepsilon_{m+1}.
$$
Therefore
$$
\frac{\#\,\big( R_m \cap [0, n] \big)}n \le 
\frac{\#\,\big( R_m \cap [0, n] \big) }{k\, T_{m+1}} \le 
\frac{(k+1) \varepsilon_{m+1}}{k}< 2\, 
\varepsilon_{m+1}.
$$
\item 
$T_m\leq n< T_{m+1}$: 
Since $[0,T_{m+1}-1]$ is an $(m+1)$-good interval  we have that  
$[0,T_{m+1}/3]$ and $R_m$ are disjoint. 
If the proportion is $0$ we are done.
Otherwise, by the center position condition,
 $n>\frac{T_{m+1}}3$. 
 Therefore
  $$
  \frac{\# \, \big( R_m\cap [0,n]\big) }{n}< 3\, \frac{\# \, \big( R_m\cap [0,T_{m+1}]\big)}{T_{m+1}}<3\, \varepsilon_{m+1},
  $$
  where the last inequality follows from the sparseness condition. 
   \item 
$n<T_m$:  In this case, by condition (b) in Definition~\ref{d.tail},
$R_m\cap[0,n]=\emptyset$. 
\end{itemize}

Since $\varrho (m,n)<3\,\varepsilon_{m+1}$ for every $n$ we get
$$ 
\frac{1}{n} \# \, \big( R_{m_t,\infty} \cap [0,n] \big)< 3\, \sum_{m=m_t}^\infty \varepsilon_m.
$$
Since, by definition,
$\sum_{m=0}^\infty \varepsilon_m<+\infty$ 
this implies 
$$
\lim_{t\to \infty}\, \limsup_{n\to \infty} \frac{1}{n}\, \# (R_{m_t,\infty} \cap [0,n])\leq  3\, \lim_{t\to\infty}\sum_{m=m_t}^\infty \varepsilon_m = 0,
$$
proving the fact.
\end{proof}
This ends the proof of Claim~\ref{c.semnome}
\end{proof}
The proof of Lemma~\ref{l.c.pinfty} is now complete.
\end{proof}

Proposition~\ref{p.l.deltadensity} gives the density of orbits in Theorem~\ref{t.accumulation}.  To  end the proof of
the theorem it remains to 
prove the part relative to the averages. 
This is an immediate  %
%
consequence of next proposition. 
Recall the notation of finite Birkhoff averages $\varphi_n(x)$ and of limit averages $\varphi_\infty(x)$
of a function $\varphi$
in  \eqref{e.birkhoffaverages} and \eqref{e.notationBirkhoff}. 
Recall also that $\mu$ is a weak$\ast$ limit of the 
empirical measures  $\mu_n=\mu_n(x_0)$.

\begin{prop}\label{p.l.alpha}
Fix $k\in \NN$. For $\mu$-almost every point $x$  
the limite average $\varphi_\infty(x)$ is well defined and belongs
to $[-3\alpha_k,3\alpha_k]$.
\end{prop}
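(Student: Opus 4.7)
The plan is to show that for $\mu$-almost every $x$ the Birkhoff limit $\varphi_\infty(x)$ is well defined and lies in $[-3\alpha_k, 3\alpha_k]$. Existence is immediate from Birkhoff's ergodic theorem, since $\mu$ is $f$-invariant as a weak$\ast$ limit of empirical measures. By symmetry it is enough to prove $\mu(\{\varphi_\infty > 3\alpha_k\}) = 0$. Writing $\psi_N(x) \eqdef \frac{1}{N}\sum_{i=0}^{N-1}\varphi(f^i(x))$, pointwise convergence $\psi_N \to \varphi_\infty$ gives $\{\varphi_\infty > 3\alpha_k\} \subset \liminf_N U_N$ up to a $\mu$-null set, where $U_N \eqdef \{\psi_N > 3\alpha_k\}$ is open by continuity of $\psi_N$. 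Fatou for sets combined with the Portmanteau theorem applied to the convergent subsequence $\mu_{n_l} \to \mu$ gives
\[
\mu(\{\varphi_\infty > 3\alpha_k\}) \le \liminf_N \mu(U_N) \le \liminf_N \limsup_{n\to\infty} \mu_n(U_N),
\]
reducing matters to exhibiting a sequence of $N$'s along which $\limsup_n \mu_n(U_N) \to 0$.

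I will take $N = T_p$, with $p$ much larger than an auxiliary scale $m \ge k$ (so $\alpha_m \le \alpha_k$). The density $\mu_n(U_{T_p})$ counts indices $i \in [0,n-1]_\NN$ for which the window average $\varphi_{W_i}(x_0)$ over $W_i \eqdef [i, i+T_p-1]_\NN$ exceeds $3\alpha_k$. Decompose $W_i$ into the canonical $T_m$-blocks $[jT_m,(j+1)T_m-1]_\NN$ it fully contains, together with at most two partial boundary blocks. Definition~\ref{d.alphacontrol} guarantees that every fully contained canonical $T_m$-block $J$ with $J \not\subset R_{m+1,\infty}$ satisfies $|\varphi_J(x_0)| \le \alpha_m$. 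Letting $B_i$ count the fully contained $T_m$-blocks of $W_i$ that sit inside $R_{m+1,\infty}$ (and therefore lack control), and writing $M \eqdef \sup|\varphi|$, the triangle inequality plus the trivial bound on the total number of full blocks yields
\[
|\varphi_{W_i}(x_0)| \le \alpha_m + \frac{(B_i + 2)\, T_m\, M}{T_p}.
\]
Choosing $T_p \ge 2T_m M/\alpha_k$, the inequality $\varphi_{W_i}(x_0) > 3\alpha_k$ forces $B_i > \alpha_k T_p/(T_m M)$.

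Since each element of $R_{m+1,\infty}$ belongs to at most $T_p$ windows $W_i$, a double counting argument bounds $\sum_{i=0}^{n-1} B_i T_m \le T_p\,|R_{m+1,\infty}\cap [0,n+T_p]_\NN|$. Markov's inequality converts this into
\[
\limsup_{n\to\infty} \mu_n(U_{T_p}) \le \frac{M}{\alpha_k}\cdot \limsup_{n\to\infty}\frac{|R_{m+1,\infty}\cap [0,n+T_p]_\NN|}{n},
\]
and the right-hand side is at most $(M/\alpha_k)\cdot 3\sum_{j\ge m+2}\varepsilon_j$ by the sparseness estimate at the core of the proof of Fact~\ref{f.c.proportion}. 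Because $(\varepsilon_j)$ is a controlling sequence, this bound tends to $0$ as $m \to \infty$ (with $p$ chosen afterwards), so $\liminf_N \mu(U_N) = 0$ and the proposition follows.

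The main obstacle is that the sliding windows $W_i$ are not aligned with any canonical $T_m$-block decomposition, so the per-block control provided by Definition~\ref{d.alphacontrol} does not apply directly to $\varphi_{W_i}(x_0)$. The two-scale hierarchy $T_k \le T_m \ll T_p$ resolves this: the many fully contained $T_m$-sub-blocks of $W_i$ are controlled one by one, while the at most two unaligned boundary blocks contribute negligibly because $T_p \gg T_m$. The remainder is bookkeeping, combining Markov with the sparseness of $R_{m+1,\infty}$ already quantified in Fact~\ref{f.c.proportion}.
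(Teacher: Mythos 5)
Your proof is correct, but it takes a genuinely different route from the paper's in both its measure-theoretic packaging and its combinatorial core. The paper works with the sets $\{x:\varphi_t(x)\in[-2\alpha_k,2\alpha_k]\}$, proves $q_t=\liminf_n\mu_n(\cdot)\to 1$ (Lemma~\ref{l.c.alpha}), and then extracts a subsequence $t_i$ with $\sum_i(1-q_{t_i})<\infty$ to run a Borel--Cantelli argument; you instead use the open superlevel sets $U_N$, Portmanteau, and Fatou for sets, which is cleaner and avoids the subsequence extraction. More substantially, the paper controls a sliding window $[i,i+t-1]$ via Claims~\ref{c.JI} and~\ref{c.filldensity}: the window splits into a middle interval $J$ that is a disjoint union of pieces each controlled at its own scale (components of $R_\ell$, $\ell\ge k$, and good $T_k$-blocks), plus two boundary pieces $A,B$; a bad average forces $A\cup B$ to occupy a definite proportion of the window, hence forces an endpoint of the window into a long component of the tail, and the set of such indices has vanishing density. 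You instead fix a single auxiliary scale $T_m$, tile the window by canonical $T_m$-blocks, invoke the control of Definition~\ref{d.alphacontrol} only for blocks not contained in $R_{m+1,\infty}$, and charge the uncontrolled blocks to the sparseness of $R_{m+1,\infty}$ via double counting and Markov's inequality --- ultimately resting on the same density estimate as the paper (Fact~\ref{f.c.proportion}). Your single-scale decomposition is more elementary (it never needs the structural fact that the controlled part of the window is an interval), at the cost of a two-parameter limit ($m\to\infty$, then $T_p\gg T_mM/\alpha_k$); the paper's argument instead localizes the failure of control to the two window endpoints. Both arguments are sound and yield the stated bound $3\alpha_k$.
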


\begin{proof}
Let $B$ be the set of points such that the limit average $\varphi_\infty (x)$ is well defined.
By  Birkhoff theorem  it  holds $\mu(B)=1$.
Therefore it is enough to prove that for every $x\in B$ there is a sequence $n_j=n_j(x) \to \infty$
such that
$\varphi_{n_j}(x)\in [-3\,\alpha_k,3\,\alpha_k]$ for every $j$.
For $t\in \NN$ define the number
$$
q_t\eqdef 
\liminf_{n\to+\infty}\, q_{t,n},
\quad
\mbox{where}
\quad
q_{t,n}\eqdef
\mu_n
\big(\left\{x \colon \varphi_t(x) \in [-2\,\alpha_k,2\,\alpha_k]
\right\}\big).
$$
\begin{lemm}\label{l.c.alpha}
 $\lim_{t\to\infty} q_t=1.$
\end{lemm}

Let us postpone the proof of this lemma and  conclude the proof of the proposition assuming it.
By definition of $\mu$
$$
\mu (\{x \colon \varphi_t(x) \in [-3\,\alpha_k,3\,\alpha_k]\})\ge q_t.
$$
By Lemma~\ref{l.c.alpha}, $q_t\to 1$, thus there is a subsequence $(q_{t_i})$ such that 
$$
\sum_{0}^\infty(1- q_{t_i}) <+\infty.
$$
Fix the sequence $(q_{t_i})_i$ and define the sets
$$
Y_N\eqdef  \bigcap_{j>N}\{x \colon \varphi_{t_j}(x) \in [-2\, \alpha_k,2\, \alpha_k]\}.
$$
By definition
$$
\mu(Y_N) \ge 1-\left(\sum_{j=N}^\infty (1-q_{t_j})\right)
$$
and hence
$$
\mu(Y)=1, \quad \mbox{where} \quad Y\eqdef \bigcup_N Y_N.
$$
We have that $\mu(B\cap Y)=1$ and that
 every point
$x\in Y\cap B$ has Birkhoff average $\varphi_\infty (x)\in [-3\, \alpha_k,3\, \alpha_k]$,
this ends the proof of the proposition (assuming Lemma~\ref{l.c.alpha}).

\begin{proof}[Proof of Lemma~\ref{l.c.alpha}]
Recall that $x_0\in X$  is controlled  at any scale with a long sparse tail
for  $X$ and $\varphi$. Recall also the choices of $\cT$, $R_\infty$, $\bar \varepsilon$, and
$\bar \alpha$. 

Fix $k$ and take a number $t>2\,T_k$. 
Pick 
 a time interval $I=[i,i+t-1]_{\NN}$.
 For each $j\ge k$ consider the intervals $H$ of the form $[rT_j,(r+1)T_j-1]_{\NN}$ contained in $I$ which are
  either $j$-good or components of the set $R_j$. 
By definition of the average forcing sequence $\bar \alpha$
(Definition~\ref{d.alphacontrol})
in each of these intervals $H$ the average satisfies
$$
\varphi_H (x_0)\in 
[-\alpha_j,\alpha_j] \subset  [-\alpha_k,\alpha_k].
$$
We call these subintervals $H$ of $I$
\emph{$\alpha_k$-controlled}. 

\begin{clai}\label{c.JI}
 For every $i$ and $t$ the union of the $\alpha_k$-controlled intervals contained in 
 $I=[i,i+t-1]_{\NN}$ is a (possibly empty) interval $J=J_I$
 such that $\varphi_J(x_0)\in [-\alpha_k,\alpha_k]$. 
\end{clai}
\begin{proof} 
Let us first define auxiliary  intervals $A=A_I$ and $B=B_I$. The interval $A$ is
defined as follows:
\begin{itemize}
 \item 
 If $i\in R_m$ for some $m\geq k$ then $A$ is the intersection of the component of $R_m$ containing $i$
 and $I$;
 \item
 otherwise we let  $A=[i,jT_k-1]$, where $j$ is the infimum of the numbers $r$ with  $i\leq r \, T_k$  
 (note that $A$ is empty if $i=jT_k$).
\end{itemize}
The interval $B$ is symmetrically defined as follows:
\begin{itemize}
 \item 
 if $i+t\in R_m$ for some $m\geq k$ then $B$ is the intersection 
 of the component of $R_m$ containing $i+t$ and $I$;
 \item 
 otherwise  we let $B=[\ell T_k, i+t]$, where $\ell$ is the maximum of the numbers $r$
  $ r T_k\leq i+t+1$
  (note that $B$ is empty if $i+t=jT_k$). 
\end{itemize}

\begin{fact}
\label{f.setJ}
 $J=[i,i+t]\setminus(A\cup B)$.  
\end{fact} 

\begin{proof}
Just note that by construction every component of $R_m$ intersecting $J$ is
contained in $J$. A similar inclusion holds for  every $m$-good interval intersecting 
$J$.
These two inclusions imply the fact. 
\end{proof}
 
It remains to see that $J$ is the union of pairwise disjoint $\alpha_k$-controlled intervals.  
By Fact~\ref{f.setJ} and construction, the components of 
$\bigcup_{m\ge k} R_m$
intersecting $J$ are 
contained in $J$. 
These components are pairwise disjoint and 
their complement is a union of $T_k$-intervals which are good.
This implies that $J$ is a disjoint union of intervals  $H$ where  the average satisfies
$\varphi_H (x_0)\in [-\alpha_k,\alpha_k]$.
This implies that the
 average $\varphi_J (x_0)$ of $\varphi$ in  $J$ belongs to $[-\alpha_k,\alpha_k]$, ending the proof of the claim.
\end{proof}

\begin{clai} 
\label{c.filldensity}
Fix $k$.
Given any $m\in \NN$ there is $t_m$  such 
that for every  $t\geq t_m$ and for every $i\in \NN$ 
such that
 $$
 \frac 1t \, \sum_{j=0}^{t-1}
 \varphi(f^{i+j}(x_0))\notin [-2\,\alpha_k,2\,\alpha_k], 
 $$
 then 
either $i$ or $i+t$ belongs to
$\bigcup_{\ell \geq m} R_\ell$.
\end{clai}
\begin{proof} 
Pick and interval $I=[i,i+t]_{\NN}$ and associate to it the interval $J=J_I$ in Claim~\ref{c.JI}
and the intervals $A=A_I$ and $B=B_I$ in its proof.
As $\varphi_J(x_0)\in [-\alpha_k,\alpha_k]$,  in order to have
$|\varphi_I (x_0)|> 2\, \alpha_k$ the set
$I\setminus J=A\cup B$ 
must fill a relatively large proportion (depending on $\alpha_k$ and $\sup|\varphi|$ but independent of $t$)  of the interval
$I$. In other words, there is   a constant $C>0$ such that
\begin{equation}\label{e.contradiction}
\frac{\#(A\cup B)}{t}>C.
\end{equation}

Fixed $m$, let
$$
t_m\eqdef \frac{2\,T_m}{C}+1.
$$
Take any $t\ge t_m$. The proof is by contradiction, if 
$i,i+t\notin \bigcup_{\ell \geq m} R_\ell$
then $|A|, |B|\le T_m$ and therefore 
 $$
 \frac{\#(A\cup B)}{t}\le \frac{2\, T_m}{t}<C,
 $$
 a contradicting \eqref{e.contradiction}. The proof of the claim is complete.
%
%
%
%
\end{proof}

We are now ready to conclude the proof of the lemma.
Claim~\ref{c.filldensity} implies that
for $t>t_m$ the number $(1-q_t)$ is less than twice the density of the set 
$\bigcup_{\ell \geq m} R_\ell$ in 
$[0,t]$.
Fact~\ref{f.c.proportion} implies that this density goes to $0$ as $t\to \infty$,
proving the lemma.
\end{proof}
The proof of Proposition~\ref{p.l.alpha} is now complete.
\end{proof}

\section{Patterns, concatenations, flip-flop families, and control of averages}\label{s.patterns}
In this section we introduce the notion of a pattern (Section~\ref{ss.patterns}) and explain its relations with the scales and tails  in
the previous section. In Section~\ref{ss.tailsandpatterns}
we see that a $\cT$-long tail  of a scale $\cT$
 induces patterns in its good intervals. Patterns will be used to codify certain orbits
 (the orbit follows some distribution pattern). This naive idea is formalised in the  notion of a concatenation of sets following a pattern, see Section~\ref{ss.concatenations}.
 We are interested in concatenations of plaques of a flip-flop family (associated to a map $\varphi$) and in the
  control of the averages of $\varphi$ corresponding to these concatenations, see Section~\ref{ss.distortionfobirkhoffaverages}. The main result in this section is Theorem~\ref{t.p.flipfloppattern} 
  that gives the control of averages for concatenations, 
  see Section~\ref{ss.flip-flopfamiliesofplaques}.
 In the sequel we will make more precise these  vague notions.

\subsection{Patterns}\label{ss.patterns}
A scale $\mathcal{T}=(T_n)_{n\ge 0}$
induces, for each $n$, a partition of $\NN$ consisting of intervals of the form  $[\ell \, T_n,(\ell+1)\, T_n-1]_{\NN}$. 
A pattern is a partition of these intervals respecting some compatibility 
rules given by the scale.

\begin{defi}[Pattern]
\label{d.pattern}
Let $\mathcal{T}=(T_n)_{n\ge 0}$ be a scale
and $I\subset \NN$ an interval of the form $I=[\ell \, T_n,(\ell+1)\, T_n-1]_{\NN}$ for some $\ell\in \NN$.

A $T_n$-pattern  $\mathfrak{P}=\mathfrak{P}(I)$ of the interval $I$
consists of a 
partition $\cP=\{I_i\}_{i=1}^r$ of 
$I$  into intervals $I_i=[k\, T_{\ell(i)}, (k+1)\, T_{\ell(i)-1}]_{\NN}$, where $\ell(i)\in \{0,\dots, n\}$,
and a map $\iota\colon\cP \to \{r,w\}$
 such that
\begin{itemize}
\item
either $\ell(i)\ne 0$ and  then 
$\iota (I_i)=w$,
\item
or $\ell(i)=0$ and then $\iota (I_i)\in\{r,w\}$.
\end{itemize}
We write $\fP=(\cP,\iota)$.

A subinterval of $I=[kT_i, (k+1)T_i-1]$ that is not strictly contained in an interval of the partition $\cP$ is called
\emph{$\mathfrak{P}$-admissible (of length $T_i$)}.
\end{defi}

In this definition, the script $w$ refers to ``walk'' and $r$ to ``rest''. 

\begin{rema}\label{r.subpattern}$\,$
\begin{enumerate}
\item If $[kT_i, (k+1)T_i-1]$  is a  $\fP$-admissible subinterval of $I$,
then the restriction of the partition $\cP$ and the restriction of the map $\iota$ induces
a $T_i$-pattern in
$[kT_i, (k+1)T_i-1]$, which is called a \emph{subpattern} of $\fP$. 
\item A $T_n$-pattern  consists either of a unique interval of $w$-type or 
is a ``concatenation" of $T_{n-1}$-patterns.
\end{enumerate}
\end{rema}


Consider an interval $I$ and a pattern $\fP$ of it as in 
Definition~\ref{d.pattern}.
A point $j=k\,T_i\in \NN$ 
is \emph{$i$-initial for the pattern $\mathfrak{P}$}   if
the interval $[j, j+T_i-1]_{\NN}$ is admissible.
A point $j\in \NN$ is  \emph{$\mathfrak{P}$-initial} if it is $i$-initial for some $i$. 
We denote the set of initial points of $\mathfrak{P}$ by $I(\mathfrak{P})$.
The set of \emph{$\mathfrak{P}$-marked points} of the $T_n$-pattern $\mathfrak{P}$, denoted by $M(\mathfrak{P})$, is the union of 
the point $\{(\ell+1)T_n\}$ and the set of all initial points of $\mathfrak{P}$.

\subsection{Tails and patterns}
\label{ss.tailsandpatterns}
We now see that given a scale $\cT$ and a $\cT$-long sparse tail $R_\infty$, 
the tail induces patterns in its good intervals $I$ 
(i.e., $I\cap R_{n,\infty}=\emptyset$, recall Definition~\ref{d.goodandbad}).
In this subsection the sparseness of the tail is not relevant.

\begin{lemm}[Pattern induced by a tail]
\label{l.tailpattern} 
Let  
$\cT=(T_n)_{n\in \NN}$ 
 be a scale and 
$R_{\infty}$ a $\cT$-long  
 sparse tail.
Let 
$$
I=[\ell \, T_n,(\ell+1)\,T_n-1]_{\NN}
$$ 
be an $n$-good interval of $R_{\infty}$ and
consider the partition $\cP$ of  $I$ and  the map $\iota\colon \cP\to \{r,w\}$ defined as follows:
\begin{itemize}
 \item 
 the intervals $J$ of $\cP$ with $\iota(J)=w$ are the components of $R_\infty$ contained in 
 $[\ell\, T_n,(\ell+1)\,T_n-1]$;
 \item 
 the complement of $R_\infty$ in $[\ell \, T_n,(\ell+1)\, T_n-1]_{\NN}$ can be written as the union of intervals $J$ of the type $[k\, T_0,(k+1)\, T_0-1]$, these intervals are  the
 elements of the partition $\cP$ with $\iota(J)=r$. 
\end{itemize}
Then $\fP=(\cP,\iota)$ defines a $T_n$-pattern in $I$.
\end{lemm}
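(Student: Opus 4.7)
The plan is to verify directly that $(\cP,\iota)$ meets the two requirements of Definition~\ref{d.pattern}: every element $I_i \in \cP$ has the form $[kT_{\ell(i)},(k+1)T_{\ell(i)}-1]_{\NN}$ for some $\ell(i)\in\{0,\dots,n\}$, and $\iota(I_i)=r$ only when $\ell(i)=0$. The argument is essentially combinatorial; its one non-trivial ingredient is an alignment observation that ensures the $R_\infty$-components and the endpoints of $I$ fit together cleanly.

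First I will describe the $w$-pieces. By item~(a) of Definition~\ref{d.tail}, every component of $R_\infty$ has the form $[kT_m,(k+1)T_m-1]_{\NN}$ for some $m\ge 0$. Since $T_n$ is a multiple of $T_m$ for each $m\le n$, the endpoints $\ell T_n$ and $(\ell+1)T_n$ are multiples of $T_m$, and therefore any such component that meets $I$ must in fact be entirely contained in $I$. The $n$-good hypothesis $I\cap R_{n,\infty}=\emptyset$ then rules out $m\ge n$, so the components of $R_\infty$ contained in $I$ are pairwise disjoint intervals of the required form with $\ell(i)=m\in\{0,\dots,n-1\}$. Labeling each of them by $w$ is consistent with the constraint: if $m>0$ then $w$ is the only permitted label, while if $m=0$ both labels are allowed.

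Next I will describe the $r$-pieces, i.e., the complement $C = I\setminus R_\infty$. By the previous step the endpoints of all components of $R_\infty$ lying in $I$ are multiples of some $T_m$, hence multiples of $T_0$ (since $T_0$ divides every $T_m$); the endpoints $\ell T_n$ and $(\ell+1)T_n$ of $I$ are also multiples of $T_0$. Consequently $C$ is a finite disjoint union of maximal intervals of the form $[aT_0,bT_0-1]_{\NN}$ with $a<b$, and each such maximal interval further splits into $b-a$ consecutive blocks of length $T_0$ of the form $[jT_0,(j+1)T_0-1]_{\NN}$. These are the $r$-labeled elements of $\cP$; each has $\ell(i)=0$, so the label $r$ is permitted.

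Assembling the $w$-pieces and the $r$-pieces produces a partition $\cP$ of $I$ whose elements all have the admissible form, and whose labeling $\iota$ respects the sign convention of Definition~\ref{d.pattern}; thus $\fP=(\cP,\iota)$ is a $T_n$-pattern on $I$. The main obstacle, ensuring that the pieces tile $I$ without gaps or overlaps, is dispatched by the alignment observation: everything in sight is anchored to multiples of $T_0$, and larger $R_\infty$-components are anchored to multiples of their own scale $T_m$, which snaps them into $I$ as soon as they touch it.
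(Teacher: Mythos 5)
Your proposal is correct and follows the same route as the paper's (very terse) proof: the paper's argument consists precisely of the two observations you elaborate, namely that $n$-goodness excludes components of size $\geq T_n$ so that the remaining components align with and are absorbed into $I$, and that every $T_k$ is a multiple of $T_0$ so the complement tiles into $T_0$-blocks. Your write-up simply makes these alignment checks explicit.
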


\begin{proof}
To prove the lemma it is enough to recall that, by definition of a $n$-good interval,
there is no component of $R_\infty$ containing
the interval $[\ell \, T_n,(\ell+1)\, T_n-1]_{\NN}$ 
and that every $T_k$ is a multiple of $T_0$.
\end{proof}

The pattern $\fP$ in Lemma~\ref{l.tailpattern} is called  \emph{the pattern induced by the tail $R_\infty$}  
in the good interval $[\ell \,T_n,(\ell+1)\, T_n-1]$  and is denoted by $\fP_{n,\ell}$,
or by $\fP_{n,\ell}(R_\infty)$ (for emphasising the role of the tail).

The  next remark associates a sequence of patterns to the tail $R_\infty$.

\begin{rema}\label{r.initial}[Initial patterns for a long 
tail]
With the notation of Lemma~\ref{l.tailpattern},
by definition of the tail $R_\infty$,  the initial interval of length $T_n$,
 $[0,T_n-1]_{\NN}$, 
 is a good interval.
We let $\fP_n=\fP_{n,0}$ and call it
 \emph{the initial $T_n$-pattern of $R_\infty$}. 

The set of initial points of  $\mathfrak{P}_n$ consists of the following points:
$$
\Big( \{\mbox{origins of the components of $R_\infty$}\}
\cup
\{ kT_0\not\in R_\infty\}\Big) \cap [0, T_n-1].
$$

\end{rema}

\begin{rema}\label{r.initialcompatibility}
[Compatibility of induced patterns]
For every $i<n$  the restriction of 
the initial pattern 
$\mathfrak{P}_n$ to the interval $[0,T_{i}-1]_{\NN}$ is the initial pattern $ \mathfrak{P}_i$.
In other words,  $\mathfrak{P}_i$ is the \emph{initial $T_i$-subpattern} of $\mathfrak{P}_n$.
\end{rema}

\subsection{Concatenations and controlled plaque-segments}
\label{ss.concatenations}
Consider a compact metric space $(X,d)$, a homeomorphism $f\colon X\to X$, and
an open set $U$ of $X$. Consider a family $\cD$ of compact sets contained in  $U$. We call the
elements in $\cD$ plaques\footnote{In our applications, the elements of $\cD$ are 
sets in a flip-flop family, recall Definition~\ref{d.flipflop}.}.

Given a pair of plaques $D_0,D_1\in \cD$
we say $(D_0,D_1)$ is a \emph{plaque-segment of size $T$ relative to $U$ and $\cD$} if:
\begin{itemize}
 \item  $f^{-i}(D_1)\subset U$ for every $i\in\{0,\dots, T\}$ and
 \item $f^{-T}(D_1)\subset D_0$. 
\end{itemize}
We say that
$D_0$ is the  \emph{origin of the segment} and $D_1$ is the \emph{end of the segment.} 

\begin{figure}
\begin{minipage}[h]{\linewidth}
\centering
 \begin{overpic}[scale=.4, 
 ]{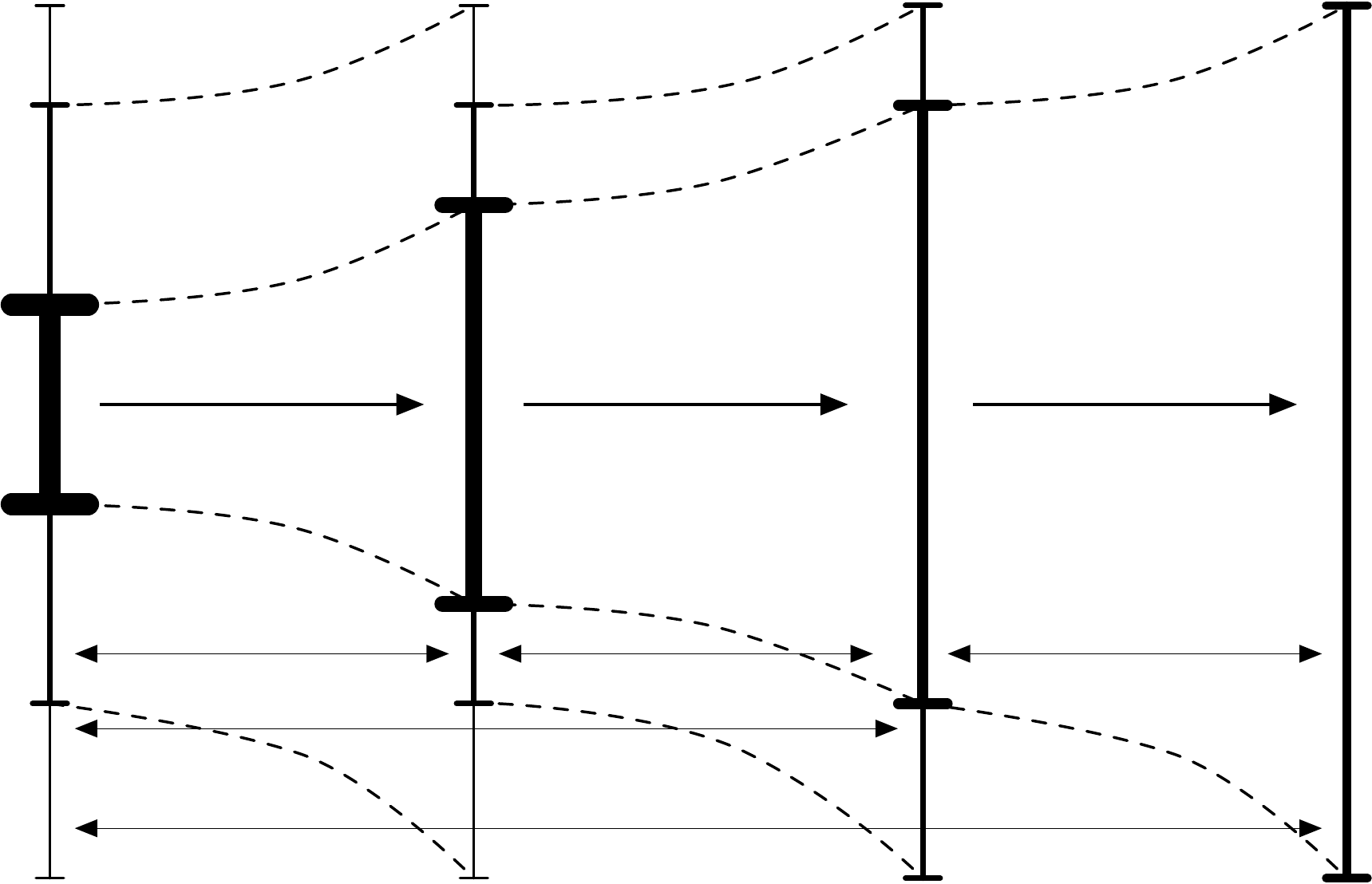}
			\put(5,63){\small$D_0$}
			\put(36,63){\small$D_1$}
			\put(69,63){\small$D_2$}
			\put(100,63){\small$D_3$}
			\put(18,36){\small$L_0$}
			\put(50,36){\small$L_1$}
			\put(82,36){\small$L_2$}
			\put(10,18){\tiny$(D_0,D_1)$}
			\put(42,18){\tiny$(D_1,D_2)$}
			\put(75,18){\tiny$(D_2,D_3)$}
			\put(18,12){\tiny$(D_0,D_2)$}
			\put(40,6){\tiny$(D_0,D_3)$}
  \end{overpic}
\caption{Concatenations}
\label{f.concatenations}
\end{minipage}
\end{figure}

Let $(D_0,D_1)$ and $(D_1,D_2)$ be two plaque-segments of lengths $L_0$ and $L_1$, respectively, 
relative to $U$ and $\cD$.  Then  
$(D_0,D_2)$ is a plaque-segment of length $L_0+L_1$, 
 called \emph{the concatenation of $(D_0,D_1)$ and $(D_1,D_2)$}.
 We use the notation 
 $(D_0,D_2)\eqdef (D_0,D_1)\ast(D_1,D_2)$. See Figure~\ref{f.concatenations}.

\begin{defi}\label{d.plaquecontrolled}
Let $(X,d)$ be compact metric space, $f\colon X\to X$ a homeomorphism,
$U$ an open set of $X$, 
$\varphi\colon U\mapsto \RR$ a continuous map,
and $\cD$ a family of plaques contained in $U$.

Consider  $T\in \NN$ and a subset  $J\subset\RR$.
A plaque-segment
$(D_0,D_1)$ of length $T>0$  relative to $U$ and $\cD$ is called \emph{$(J,T)$-controlled} if 
  $$
 \varphi_T(x)=\frac 1T\sum_{i=0}^{T-1}\varphi(f^i(x))\in J,
 \quad
 \mbox{for every $x\in f^{-T}(D_1) \subset D_0$.}
 $$
 \end{defi}
 When there is no ambiguity on the pair $U$ and $\cD$ the dependence on these sets will be  omitted.

\begin{defi}
Let $(X,d)$ be compact metric space, $f\colon X\to X$ a homeomorphism,
$U$ an open set of $X$, 
$\varphi\colon U\mapsto \RR$ a continuous map,
and $\cD$ a family of plaques contained in $U$.

Consider a 
 a scale $\cT=(T_n)_{n\in \NN}$, a
$T_n$-pattern $\mathfrak{P}$
of  $[\ell\, T_n,(\ell+1)\, T_n-1]_{\NN}$, $n>0$, the set
$M(\mathfrak{P})$ of its marked points, and
a family of subsets
$\cJ=(J_i)_{i\in \NN}$ of $\RR$.

A family $\{D_i\}_{i\in M(\mathfrak{P})}$ of plaques  of $\cD$
is called \emph{$(\cJ,\mathfrak{P})$-controlled} 
(relatively to $U$ and $\cD$)
if:
\begin{itemize}
 \item 
 For every $\mathfrak{P}$-admissible interval 
 $[k\, T_i,(k+1)\, T_i-1]$ the pair   $(D_{kT_i},D_{(k+1)T_i})$ is a 
 plaque-segment of length $T_i$ that is
  $(J_i,T_i)$-controlled (relative to $U$ and $\cD$). 
 \item 
  For any $i,j\in  M(\mathfrak{P})$, $i<j$, the pair $(D_i,D_j)$ is a plaque-segment of length $j-i$
  (relative to $U$ and $\cD$).
\end{itemize}
\end{defi}

\subsection{Distortion of Birkhoff averages and concatenations in flip-flop families}
\label{ss.distortionfobirkhoffaverages}

The following result is a translation of \cite[Lemma 2.4]{BoBoDi2}
to the context of flip-flop families with sojourns.  Recall the notation for Birkhoff averages in 
\eqref{e.birkhoffaverages}. 

\begin{lemm}[Small distortion of Birkhoff averages over long concatenations]
\label{l.distortion}
Let \newline
$f \colon X \to X$ be a homeomorphism, $\varphi\colon X\to \RR$ a continuous function,
and $\fF$ 
a flip-flop family associated to $\varphi$ and $f$ with sojourns in a compact set $Y$.  

Then for every $\alpha>0$ there exists $t = t(\alpha) \in \NN$ with the following property:
Consider any $T\ge t$ and  any  family of plaques $\{D_i\}_{0\leq i\leq T}$ of $\fF$ such that
for every $i=0, \dots, T-1$.
$(D_i, D_{i+1})$ is a plaque-segment of length $L_i$.
Then the plaque-segment 
$$
(D_0,D_T)= (D_0,D_1)\ast(D_1,D_2)\ast \cdots \ast (D_{T-1},D_T)
$$
 satisfies
$$
 \left| \varphi_L (x) - \varphi_L(y) \right| < \alpha,  \quad \mbox{where} \quad L\eqdef \sum_{i=0}^{T-1} L_i,
$$
for every pair of points $x,y\in D_0$
such that 
$$
f^{L_{i-1}'}(x), f^{L_{i-1}'}(y) \in D_i, \quad \mbox{where} \quad L_{i-1}^\prime \eqdef\sum_{j=0}^{i-1} L_j
$$
for every $i=0,\dots, T-1$.
\end{lemm}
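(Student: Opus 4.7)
The statement is the analogue for flip-flop families \emph{with sojourns} of \cite[Lemma~2.4]{BoBoDi2}, and the strategy follows the same blueprint: extract from the flip-flop structure a backward contraction along the concatenation that forces $x$ and $y$ to stay uniformly close along most of the orbit, then invoke uniform continuity of $\varphi$ to translate this smallness of distances into smallness of the Birkhoff average difference.

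First, since $X$ is compact and $\varphi$ is continuous, set $M \eqdef \sup_X |\varphi|$, $C\eqdef\diam(X)$, and let $\omega$ be a modulus of continuity of $\varphi$, so $\omega(s)\to0$ as $s\to0^+$. Next, for the tracking pair $(x,y)$ and for $j$ lying in the $i$-th plaque-segment ($L'_{i-1} \le j \le L'_i$), apply the contraction property of Definition~\ref{d.flipflop}(\ref{i.flipflop33}) if the segment is a single-iterate flip-flop transition, or of Definition~\ref{d.flipfloptail}(\ref{i.defff2}) if it is a sojourn, to obtain
\[
d(f^j(x),f^j(y)) \le \zeta_{L'_i - j}\cdot d(f^{L'_i}(x),f^{L'_i}(y)).
\]
Iterating this backwards across the subsequent segments and bounding the final distance by $C$ yields the multiplicative estimate
\[
d(f^j(x),f^j(y)) \le \zeta_{L'_i - j}\cdot \prod_{m=i+1}^{T-1}\zeta_{L_m}\cdot C.
\]

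Given $\alpha>0$, choose $\delta>0$ with $\omega(\delta)<\alpha/2$ and $K\in\NN$ with $\zeta_K\cdot C<\delta$. Call an iterate $j$ \emph{good} if $d(f^j(x),f^j(y))<\delta$ and \emph{bad} otherwise. The previous estimate shows that bad iterates concentrate ``near the end'' of the concatenation: namely, an iterate fails to be good only when it is within $K$ steps of the right endpoint of its segment \emph{and} lies in one of the last few segments for which $\prod_{m>i}\zeta_{L_m}$ is not yet below the desired threshold. A counting argument then bounds the number of bad iterates by a quantity of order $TK$ in the worst case, while $L \ge T$ since each $L_i\ge 1$. Splitting
\[
|\varphi_L(x)-\varphi_L(y)| \le \frac1L\sum_{j=0}^{L-1} |\varphi(f^j(x))-\varphi(f^j(y))| \le \omega(\delta) + \frac{2M\cdot\#\{\text{bad }j\}}{L},
\]
and choosing $t(\alpha)$ large enough to make the second summand $<\alpha/2$ for all $T\ge t(\alpha)$, we conclude $|\varphi_L(x)-\varphi_L(y)|<\alpha$, as required.

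\textbf{Main obstacle.} The delicate point is making the distance estimate above uniform across concatenations mixing single-iterate transitions and sojourns of possibly varying lengths. The sequence $(\zeta_n)$ need not be geometric, and factors $\zeta_{L_m}$ attached to short segments can be close to $1$; what rescues the argument is that the flip-flop family and its sojourn extension share the \emph{same} contraction sequence, so that chaining sufficiently many segments eventually produces strong contraction regardless of the distribution of the individual lengths. Formalizing this heuristic into a uniform quantitative bound — depending only on $T$ rather than on the configuration $(L_0,\dots,L_{T-1})$ — is the technical heart of the proof and the place where the hypothesis $T\ge t(\alpha)$ is genuinely used.
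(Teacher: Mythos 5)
Your strategy is the intended one: the paper itself omits the proof, saying only that it coincides with that of \cite[Lemma~2.4]{BoBoDi2} and that the key ingredient is the expansion property in item~(c) of Definitions~\ref{d.flipflop} and~\ref{d.flipfloptail}. Your backward-contraction estimate
$d(f^j(x),f^j(y)) \le \zeta_{L'_i-j}\cdot\prod_{m=i+1}^{T-1}\zeta_{L_m}\cdot C$
is exactly the right starting point. The gap is in the counting step. If the number of bad iterates is only bounded by a quantity of order $TK$, the argument fails: the sole lower bound on $L$ is $L\ge T$ (segments may have length $1$), so the error term $2M\cdot\#\{\text{bad }j\}/L$ is of order $2MK$, a constant; enlarging $t(\alpha)$ does not help because numerator and denominator grow together. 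You must use \emph{both} of the localizing conditions you state in the preceding sentence: a bad iterate lies within $K$ steps of the right endpoint of its own segment \emph{and} in one of the last $K'$ segments, where $K'$ is the ($T$-independent) number of trailing segments needed before $\prod_{m>i}\zeta_{L_m}$ drops below $\delta/(C\sup_n\zeta_n)$. Combining the two gives $\#\{\text{bad }j\}\le K\,K'$, a constant, and then $2MKK'/L\le 2MKK'/T<\alpha/2$ once $T\ge t(\alpha)\eqdef\lceil 4MKK'/\alpha\rceil$. As written, your bound of order $TK$ contradicts your own characterization of the bad set and does not yield the conclusion.

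The second unresolved point is the one you yourself flag as the ``technical heart'': the existence of the constant $K'$ above requires $\prod_{m>i}\zeta_{L_m}\to 0$ as $T-i\to\infty$ uniformly in the lengths $L_m$, and this does not follow from $\zeta_n\to 0$ alone. After normalizing $(\zeta_n)$ to be non-increasing one still only gets $\zeta_{L_m}\le\zeta_1$, and $\zeta_1$ may be $\ge 1$ (indeed $\zeta_0\ge 1$ is forced by item~(c) with $i=0$), so a long run of length-one segments contributes a product that need not decay. One must either assume, as one may for the families actually constructed in the paper from uniform backward contraction, that $\sup_{n\ge 1}\zeta_n<1$, or else group each maximal run of $r$ consecutive single-iterate segments into one block and apply Definition~\ref{d.flipflop}(c) to the whole block at once, obtaining the single factor $\zeta_r$ instead of the product $\zeta_1^{\,r}$. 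Deferring this point as a heuristic leaves the quantitative statement --- uniformity of $t(\alpha)$ over all configurations $(L_0,\dots,L_{T-1})$ --- unproved.
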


The proof is the same as the one of \cite[Lemma 2.4]{BoBoDi2} and the key ingredient is the 
expansion properties  in item \eqref{i.flipflop33}  in Definitions~\ref{d.flipflop} and \ref{d.flipfloptail}.
We omit this proof and refer to  \cite{BoBoDi2}.

%
%
%
%

\subsection{Flip-flop families and concatenations}
\label{ss.flip-flopfamiliesofplaques}

The aim of this section  is to prove the following theorem.

\begin{theor}\label{t.p.flipfloppattern} 
Let $(X,d)$ be a compact metric space,
$Y$ a compact subset of $X$,
$U$ an open subset of $X$,
$f\colon X \to X$ a homeomorphism, 
$\varphi\colon U \to \mathbb{R}$ a continuous function,
and
$\fF$ a  flip-flop family with sojourns along  $Y$ associated to $\varphi$ and $f$ 
whose plaques are contained in $U$.

Consider  sequences $(\delta_n)_{n\in \NN}, (\alpha_n)_{n\in \NN}$, and 
$(\beta_n)_{n\in \NN}$ of positive numbers such that:
$$
(\delta_n)_n \to 0
\quad
\mbox{and}
\quad
 \alpha_{n+1}< \dfrac{\alpha_n}{4}<\beta_n<\dfrac{\alpha_n}{2}.
 $$

Then there is  a scale $\cT=(T_n)_{n\in \NN}$ satisfying the following properties:
For every plaque $D\in \fF$, 
every  $T_n$-pattern $\mathfrak P= (\cP, \iota)$, 
and every $\omega \in \{+,- \}$
there is a family of plaques  $\cD_{\mathfrak{P} } = \{D_a\}_{a\in M(\mathfrak{P})}$
of $\fF$ such that :
\begin{enumerate}[{(I}1{)}]
\item \label{i.I1}
$D_0= D$;
\item \label{i.I2}
the family $\{D_a\}_{a\in M(\mathfrak{P})}$ is
$(\cJ_n,\mathfrak{P})$-controlled 
(relatively to $U$ and  $\mathfrak{P}$) 
where 
$\cJ_n=\{J_i\}_{i\in\{0,\dots,n\}}$ and
$$
J_i\eqdef \left[ -\alpha_i, -\frac{\alpha_i}2 \right] \cup \left[\frac{\alpha_i}2, \alpha_i\right] \mbox{ for } i<n
\mbox{ and }
J_n\eqdef \omega\, \left[ \frac{\alpha_n}2, \alpha_n\right];
$$
\item \label{i.I3}
 if $[a,a+T_{i}-1]$ is an interval of the partition 
 {$\cP$} of $w$-type then  for every point $x\in f^{-T_i}(D_{a+T_i})$ 
 the segment of orbit $\{x,\dots, f^{T_i}(x)\}$ is $\delta_i$-dense in $Y$.
 \end{enumerate}
\end{theor}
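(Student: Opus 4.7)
The plan is to prove Theorem~\ref{t.p.flipfloppattern} by induction on $n$, simultaneously constructing the scale $\cT=(T_n)_{n\in\NN}$ and, for every $T_n$-pattern $\fP$, every initial plaque $D$, and every outer sign $\omega$, the plaque family $\cD_\fP$ satisfying (I1)-(I3). The three working ingredients are: the flip-flop property (each plaque of $\fF$ contains, after one iterate, both an $\fF^+$- and an $\fF^-$-subplaque, on which $\varphi\geq\alpha$ or $\varphi\leq-\alpha$); the sojourn property of Definition~\ref{d.flipfloptail} (which furnishes subplaques whose first $N_\delta$ iterates are $\delta$-dense in $Y$ and which still land in a prescribed $\fF^\pm$); and Lemma~\ref{l.distortion} (the Birkhoff averages along a long concatenation of plaques vary by an arbitrarily small amount across the preimage in the starting plaque). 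I set $T_0$ to be a large multiple of $3$ and take $T_{n+1}=\kappa_{n+1}T_n$ with $\kappa_{n+1}\geq 3$ chosen so large that $\kappa_{n+1}/\kappa_n\to\infty$, that $T_n\gg N_{\delta_n}$, that the distortion bound of Lemma~\ref{l.distortion} at scale $T_n$ is negligible compared to $\min(\alpha_n/2-\beta_n,\,\alpha_{n+1}/2)$, and that the sign-flip resolution $2\alpha_n/\kappa_{n+1}$ is strictly less than the width $\alpha_{n+1}/2$ of the target band at the next level.

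\emph{Base case.} A $T_0$-pattern is a single interval, of type $r$ or $w$. In the $w$-case I first apply the sojourn property at density $\delta_0$, shrinking $D$ to $\widehat{D}^{\omega'}\subset D$ so that the first $N_{\delta_0}$ iterates are $\delta_0$-dense in $Y$; in the $r$-case I skip this step. Then I iterate the flip-flop property over the remaining steps (up to $T_0$ total), choosing at each step between $\fF^+$ and $\fF^-$, and by adjusting the proportions of $+$ versus $-$ I drive the Birkhoff average of the resulting plaque-segment $(D_0,D_{T_0})$ into $\omega J_0=\omega[\alpha_0/2,\alpha_0]$; Lemma~\ref{l.distortion} ensures the average is uniform over the preimage in $D_0$.

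\emph{Induction step.} Given a $T_{n+1}$-pattern $\fP$ of $I=[\ell T_{n+1},(\ell+1)T_{n+1}-1]$ I distinguish two cases. In \emph{Case A}, $\fP$ is the single $w$-interval of length $T_{n+1}$ and the only admissible subinterval is $I$ itself; I apply the sojourn property at density $\delta_{n+1}$ to handle the density requirement, then iterate the flip-flop property over the remaining $T_{n+1}-N_{\delta_{n+1}}$ steps to drive the overall Birkhoff average into $\omega J_{n+1}$, the sojourn's contribution $O(N_{\delta_{n+1}}/T_{n+1})$ being absorbed by our choice of $T_{n+1}$. In \emph{Case B}, $\fP$ is decomposed and all its partition intervals have length at most $T_n$; I subdivide $I$ into its $\kappa_{n+1}$ consecutive blocks of length $T_n$, each inheriting a $T_n$-subpattern $\fP_j$ of $\fP$, and apply the induction hypothesis to each $\fP_j$ with a sign $\omega_j\in\{+,-\}$ to be selected below and with starting plaque equal to the end-plaque of the previous block (equal to $D$ for $j=0$).

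The main technical step, and where the ordering $\alpha_{n+1}<\alpha_n/4<\beta_n<\alpha_n/2$ is essential, is the \emph{sign-balancing argument} for Case B. Each block average $A_j$ lies in $\omega_j[\alpha_n/2,\alpha_n]$ by induction, whereas the target for the global average $\bar A=\kappa_{n+1}^{-1}\sum_j A_j$ is the much narrower band $\omega[\alpha_{n+1}/2,\alpha_{n+1}]$, closer to zero than any individual $A_j$. Starting from all $\omega_j=\omega$ one has $|\bar A|\geq\alpha_n/2$, and flipping the sign of block $j$ changes $\bar A$ by at most $2\alpha_n/\kappa_{n+1}$, which our choice of $\kappa_{n+1}$ keeps strictly below the width $\alpha_{n+1}/2$ of the target, so a discrete intermediate-value argument places $\bar A$ inside the desired band at some step. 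The parameter $\beta_n$ provides a safety margin that absorbs both the distortion error from Lemma~\ref{l.distortion} and, in Case A, the sojourn contribution, keeping averages strictly inside the prescribed bands rather than near their boundaries. Admissibility at intermediate scales is automatic: every $T_i$-admissible interval with $i\leq n-1$ is contained in a single $T_n$-block (since the blocks are aligned at multiples of $T_n$, hence of $T_i$) and is thus controlled by the induction hypothesis applied to $\fP_j$, while each $T_n$-admissible interval is one of the blocks and is controlled by the chosen $\omega_j$; this closes the induction and establishes (I1)-(I3).
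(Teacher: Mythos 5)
Your overall architecture coincides with the paper's: induction on $n$, a split into the trivial $w$-pattern case (handled with the sojourn property plus a long controlled continuation) and the case of a concatenation of $T_n$-subpatterns, with Lemma~\ref{l.distortion} uniformizing averages over preimages. However, the sign-balancing step in your Case B has a genuine gap. You treat the block averages $A_j$ as fixed numbers attached to a static sign assignment $(\omega_j)_j$ and run a discrete intermediate-value argument by flipping one $\omega_j$ at a time, claiming each flip moves $\bar A$ by at most $2\alpha_n/\kappa_{n+1}$. But the construction is sequential: the plaque family for block $j$ starts at the end-plaque of block $j-1$, so flipping $\omega_j$ forces you to rebuild blocks $j+1,\dots,\kappa_{n+1}-1$ from a different starting plaque. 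The induction hypothesis only locates each rebuilt average somewhere in $\omega_{j'}[\alpha_n/2,\alpha_n]$, so each of the up to $\kappa_{n+1}$ downstream averages can move by as much as $\alpha_n/2$, and a single flip can change $\bar A$ by an amount of order $\alpha_n/2$ rather than $2\alpha_n/\kappa_{n+1}$. The distortion lemma does not rescue this, since it compares points inside one fixed concatenation, not two concatenations built from different plaques.

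The paper's Proposition~\ref{p.l.induction2} avoids this by never revisiting a choice: it builds the concatenation left to right and tracks the \emph{running} average of the growing prefix. After $m$ blocks of sign $+$ (with $m\ge k_0>6\alpha_n/\alpha_{n+1}$), appending one block of sign $-$ changes the running average by at most $2\alpha_n/(m+i)<\tfrac16\alpha_{n+1}$, the sequence decreases monotonically, and hence cannot jump over the band $[\tfrac46\alpha_{n+1},\tfrac56\alpha_{n+1}]$; one stops at the first entry time. A second ingredient you are missing then becomes necessary: since the pattern prescribes exactly $\kappa_{n+1}$ blocks, after the running average first lands in the band the remaining blocks must still be appended, and one needs the uniformization Lemma~\ref{l.necessario} to choose each remaining sign so that the average stays in $\omega[\tfrac12\alpha_{n+1},\alpha_{n+1}]$. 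Replacing your static flip-and-IVT step by this greedy running-average argument (and adding the continuation lemma) closes the gap; the rest of your outline, including the treatment of admissible subintervals and of the sojourn blocks, is consistent with the paper.
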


We say that the family of plaques $\cD_{\mathfrak{P} } = \{D_a\}_{a\in M(\mathfrak{P})}$
in  Theorem~\ref{t.p.flipfloppattern} is 
\emph{$((\mathcal{J})_n,\mathfrak P)$-controlled and starts at
$D_0$}.

\begin{proof}
The construction of the scales $\cT$  in the theorem is done by induction on $n$
(assuming that $T_i$ is defined  for $i\leq n$ we will
define $T_{n+1}$).
The proof considers two cases: either the pattern is trivial or it is a concatenations of 
$T_n$-patterns. Proposition~\ref{p.l.induction1} deals with trivial patterns while
Proposition \ref{p.l.induction2} deals with concatenations.

Note that the  
Theorem~\ref{t.p.flipfloppattern} claims the existence of a  scale  $\cT=(T_n)_{n\in \NN}$ that holds
for every disk $D\in \fF$.
In the proofs of Propositions~\ref{p.l.induction1}
and \ref{p.l.induction2} we get such a number depending on the 
plaque $D\in \fF$ and uniformly bounded. To uniformize this number for
every plaque in $\fF$ we will use Lemma~\ref{l.necessario} below. Recall 
the definition of the distortion time number $t(\alpha)$
in Lemma~\ref{l.distortion} associated to $\alpha>0$.

\begin{lemm}[Uniformization]
\label{l.necessario}
Take a scale $(T_n)_{n\in \NN}$ and a sequence $(\alpha_n)_{n\in \NN}$ as in 
Theorem~\ref{t.p.flipfloppattern}. Then
for every plaque $D_0\in \fF$,  
every $\omega_0 \in\{-,+\}$,
and  every
$$
t\ge \tau_n \eqdef \max \left\{ t\left(\frac{\alpha_{n+1}}{6}\right), 6 \,  \frac{\alpha_n\, T_n}{\alpha_{n+1}}\right\}
$$
the following property holds:

Let $D_1\in\fF$ be a plaque  such that $(D_0,D_1)$ is
a
$(\omega_0 \, [\frac{\alpha_{n+1}}2,\alpha_{n+1}],t)$-con\-trolled. 
plaque-segment.

Then there is 
 $\omega \in \{-,+\}$ such that
if $(D_1, D_2)$
 is a plaque-segment that   is $(\omega \, [\frac{\alpha_{n}}2,\alpha_{n}],T_n)$-controlled
then the concatenation $(D_0,D_2)= (D_0,D_1)\ast (D_1,D_2)$  is
$(\omega_0 \, [\frac{\alpha_{n+1}}2,\alpha_{n+1}],t+T_n)$-controlled. 
\end{lemm}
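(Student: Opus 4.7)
The plan is to decompose the Birkhoff average along the concatenation $(D_0,D_2)$ as a convex combination of those along the two pieces, and to choose $\omega\in\{+,-\}$ so that this combination stays in $\omega_0[\alpha_{n+1}/2,\alpha_{n+1}]$. For any $x\in D_0\cap f^{-(t+T_n)}(D_2)$ one has
\[
\omega_0\,\varphi_{t+T_n}(x)=\frac{u(x)+s\,v(x)}{1+s},
\]
where $u(x)\eqdef\omega_0\varphi_t(x)$, $v(x)\eqdef\omega_0\varphi_{T_n}(f^t(x))$, and $s\eqdef T_n/t$. Three ingredients are at hand: the control of $(D_0,D_1)$ gives $u(x)\in[\alpha_{n+1}/2,\alpha_{n+1}]$; the bound $t\ge 6\alpha_n T_n/\alpha_{n+1}$ gives $s\alpha_n\le\alpha_{n+1}/6$; and Lemma~\ref{l.distortion} applied with $\alpha=\alpha_{n+1}/6$ (permitted because $t\ge t(\alpha_{n+1}/6)$) says that $x\mapsto u(x)$ has oscillation strictly less than $\alpha_{n+1}/6$ on $D_0\cap f^{-t}(D_1)$.

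A direct arithmetic using $\alpha_{n+1}<\alpha_n/4$ then yields two admissibility thresholds for $u(x)$: taking $\omega=\omega_0$ (hence $v(x)\in[\alpha_n/2,\alpha_n]$), the combined average is automatically $\ge\alpha_{n+1}/2$ and is $\le\alpha_{n+1}$ whenever $u(x)\le\alpha_{n+1}-s(\alpha_n-\alpha_{n+1})$; taking $\omega=-\omega_0$ (hence $v(x)\in[-\alpha_n,-\alpha_n/2]$), the combined average is automatically $\le\alpha_{n+1}$ and is $\ge\alpha_{n+1}/2$ whenever $u(x)\ge\alpha_{n+1}/2+s(\alpha_n+\alpha_{n+1}/2)$. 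The length of the overlap of these two admissibility regions equals $\alpha_{n+1}/2-2s\alpha_n+s\alpha_{n+1}/2\ge\alpha_{n+1}/6+s\alpha_{n+1}/2$, which strictly exceeds the distortion bound $\alpha_{n+1}/6$ since $s>0$.

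I will then define $\omega$ using only $D_0,D_1,t,\omega_0$: setting $M\eqdef\sup\{u(x):x\in D_0\cap f^{-t}(D_1)\}$, take $\omega=\omega_0$ if $M\le\alpha_{n+1}-s(\alpha_n-\alpha_{n+1})$ and $\omega=-\omega_0$ otherwise. In the second case the overlap-exceeds-oscillation estimate forces the infimum of $u$ on the same set to exceed $\alpha_{n+1}/2+s(\alpha_n+\alpha_{n+1}/2)$. Since $D_0\cap f^{-(t+T_n)}(D_2)\subset D_0\cap f^{-t}(D_1)$, these pointwise bounds transfer to the smaller set relevant for the concatenation, so $(D_0,D_2)$ is $(\omega_0[\alpha_{n+1}/2,\alpha_{n+1}],t+T_n)$-controlled. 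The main technical point is precisely this overlap-versus-oscillation comparison, for which the constants $6\alpha_n T_n/\alpha_{n+1}$ and $t(\alpha_{n+1}/6)$ in the definition of $\tau_n$ are engineered, together with the standing assumption $\alpha_{n+1}<\alpha_n/4$ from the hypotheses of Theorem~\ref{t.p.flipfloppattern}.
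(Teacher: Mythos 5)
Your proof is correct and follows essentially the same route as the paper's: both decompose $\varphi_{t+T_n}$ as a weighted average of $\varphi_t$ and $\varphi_{T_n}\circ f^t$, use the distortion bound from $t\ge t(\alpha_{n+1}/6)$ to make the sign choice uniform over the plaque, and use $t\ge 6\alpha_n T_n/\alpha_{n+1}$ for the arithmetic. The only difference is cosmetic — the paper splits cases at the fixed threshold $\tfrac56\alpha_{n+1}$ for $\max\varphi_t$ while you use the $s$-dependent thresholds $A$ and $B$ — and your explicit overlap-versus-oscillation bookkeeping is, if anything, slightly sharper.
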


\begin{proof} 
We prove the lemma for $\omega_0=+$, the case $\omega_0=-$ is analogous. 
Note first that
the choice of $t$ implies that
the distortion of $\varphi_t$ 
in $f^{-t}(D_1)$
is bounded by $\frac{\alpha_{n+1}}{6}$.
Moreover,  as the orbit segment
$(D_0,D_1)$ is 
$([\frac{\alpha_{n+1}}2,\alpha_{n+1}],t)$-controlled,
we have
$$
\varphi_{t}(f^{-t}(D_1))\subset\left[ \frac{\alpha_{n+1}}2,\alpha_{n+1} \right].
$$
Now there are two cases:
\begin{enumerate}
 \item 
 if 
 $\displaystyle{
 \max_{x\in f^{-t}(D_1)}\varphi_t(x)}\leq \frac{5\, \alpha_{n+1}}{6},
$ 
we choose $\omega=+$,
 \item
 otherwise 
 $\displaystyle{
 \min_{x\in f^{-t}(D_1)}\varphi_t(x)\geq \frac{4\, \alpha_{n+1}}{6}}$
 and we choose $\omega=-$.
\end{enumerate}
In the first case, as  $\omega=+$ one can easily check that 
\begin{equation}
\label{e.easytocheck}
\varphi_{t+T_n}(x)>\varphi_t(x) \quad \mbox{for every $x\in f^{-(t+T_n)}(D_2)$.}
\end{equation}
Moreover, as $t>6 \, \frac{\alpha_n\, T_n}{\alpha_{n+1}}$ 
and $(D_1, D_2)$ is
 $([\frac{\alpha_{n}}2,\alpha_{n}],T_n)$-controlled
we have
$$
\varphi_{t+T_n}(x)<\dfrac{\frac 56\, \alpha_{n+1}t +T_n\alpha_n}{t+T_n}< 
\frac{t \, \alpha_{n+1}}{t+T_n}<
\alpha_{n+1} 
\quad \mbox{for every $x\in f^{-(t+T_n)}(D_2)$.}
$$ 
Finally, recalling that $(D_0,D_1)$ is  
$([\frac{\alpha_{n+1}}2,\alpha_{n+1}],t)$-controlled and \eqref{e.easytocheck}
$$
\frac{\alpha_{n+1}}{2}<
\varphi_t(x)<
\varphi_{t+T_n}(x)< \alpha_{n+1}
$$
ending the proof in the first case. 

Case (b) is analogous and hence omitted.
The proof of the lemma is now complete.
\end{proof}

\begin{prop}[Trivial patterns]
\label{p.l.induction1} 
Under the assumptions of Theorem~\ref{t.p.flipfloppattern},
assume that for every $i=0, \dots, n$  there are defined natural numbers $T_i$ such that  
the conclusions in the
theorem hold for $T_i$-patterns. 

Then there is $\widetilde T_{n+1}$ such that 
for every  $T> \widetilde{T}_{n+1}$,
every plaque $D\in \fF$, and every $\omega\in \{+,-\}$
there is a plaque $D_0\in \fF$ such that
\begin{itemize}
\item
$f^{-T}(D_0)\subset D$,
\item
for every $x\in f^{-T}(D_0)$ the set
$\{x,f(x),\dots, f^{T}(x)\}$ is $\delta_{n+1}$-dense  in $Y$, and
\item
 $(D_0,D)$ is 
   $(\omega \, [\frac{\alpha_{n+1}}2, \alpha_{n+1}],T)$-controlled.
\end{itemize}
\end{prop}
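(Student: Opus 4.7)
My strategy is to realize the length-$T$ plaque-segment starting at $D$ as the concatenation of a single sojourn of length $N = N_{\delta_{n+1}}$ (from Definition~\ref{d.flipfloptail}) with a long string of $T_n$-controlled sub-segments produced by the inductive hypothesis, whose signs are chosen adaptively so that the cumulative Birkhoff average lands in $\omega\cdot[\alpha_{n+1}/2,\alpha_{n+1}]$ by time $T$. The $\delta_{n+1}$-density of the orbit in $Y$ is obtained for free from the sojourn alone, since by Definition~\ref{d.flipfloptail} any orbit segment of length $\geq N$ starting in $\widehat{D}^{\omega}$ is $\delta_{n+1}$-dense in $Y$.

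I choose $\widetilde{T}_{n+1}$ large enough that $\widetilde{T}_{n+1} > N + \tau_n$ (with $\tau_n$ as in Lemma~\ref{l.necessario}), that $N\sup_X|\varphi|/\widetilde{T}_{n+1} \ll \alpha_{n+1}$, and that $(\widetilde{T}_{n+1}-N)/T_n$ is much larger than $\alpha_n/\alpha_{n+1}$. For $T > \widetilde{T}_{n+1}$, assumed (after an at-most-bounded adjustment of the sojourn length, or a single shorter initial segment) to satisfy $T - N \equiv 0 \pmod{T_n}$, I apply the sojourn property to produce $\widehat{D}^\omega \subset D$ and set $\widetilde D_1 := f^N(\widehat{D}^\omega) \in \fF^\omega$. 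Then, iteratively for $j = 1, \dots, k$ with $k := (T-N)/T_n$, I invoke the inductive hypothesis for the trivial single-$w$ $T_n$-pattern to obtain $\widetilde D_{j+1} \in \fF$ so that $(\widetilde D_j, \widetilde D_{j+1})$ is a $T_n$-plaque-segment with Birkhoff average in $\omega_j \cdot [\alpha_n/2, \alpha_n]$. The signs $\omega_j \in \{+,-\}$ are chosen adaptively, pulling the running cumulative average $a_j$ of $\varphi$ over $(D, \widetilde D_j)$ toward the midpoint $\omega \cdot 3\alpha_{n+1}/4$ of the target window. The final plaque is $D_{\mathrm{end}} := \widetilde D_{k+1}$.

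The main obstacle is to verify that the adaptive sign choice actually places $a_k$ inside $\omega\cdot[\alpha_{n+1}/2,\alpha_{n+1}]$, with the estimate holding uniformly over the whole plaque $f^{-T}(D_{\mathrm{end}})$. In the approach phase, the per-step change satisfies $|a_{j+1}-a_j| = \frac{T_n|b_{j+1}-a_j|}{N+(j+1)T_n} = O\bigl((\alpha_n + \sup_X|\varphi|)/j\bigr)$, which falls below $\alpha_{n+1}/2$ once $j$ exceeds a constant multiple of $\alpha_n/\alpha_{n+1}$. Combined with the monotone pull toward $\omega\cdot 3\alpha_{n+1}/4$, this guarantees that at some step $j^* \leq k$ the cumulative average enters the target window, with overshoot bounded by that same step size, so the entry lands inside the window. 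From step $j^*$ onward, Lemma~\ref{l.necessario} provides, at each additional $T_n$-extension, a sign $\omega_j$ keeping the cumulative average inside $\omega\cdot[\alpha_{n+1}/2,\alpha_{n+1}]$, so the maintenance phase carries through to step $k$. Uniformity of the Birkhoff average across the whole plaque is built into the $(J_n, T_n)$-controlled property of the inductively produced $T_n$-segments, which in turn relies on the expansion property in Definition~\ref{d.flipflop} via the small-distortion Lemma~\ref{l.distortion}.
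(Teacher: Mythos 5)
Your proposal is correct and follows essentially the same route as the paper: a single sojourn of length $N=N_{\delta_{n+1}}$ supplies the $\delta_{n+1}$-density, followed by a long concatenation of inductively given $T_n$-controlled blocks whose signs steer the cumulative average into $\omega\,[\tfrac{\alpha_{n+1}}{2},\alpha_{n+1}]$ (the small per-step increment prevents overshooting), with Lemma~\ref{l.necessario} maintaining the window thereafter and Lemma~\ref{l.distortion} giving uniformity over the plaque. The only cosmetic difference is that you choose the signs adaptively toward the midpoint of the target window, whereas the paper first uses a block of $+$ signs to push the average into $[\beta_n,\tfrac{3\alpha_n}{2}]$ and then a block of $-$ signs until the first entry into the window; both hinge on the same no-jump argument.
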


\begin{proof}
 We only present the proof for the case $\omega=+$, the case $\omega=-$ is analogous and thus omitted. 
Let $N=N_{\delta_{n+1}}$ as in
Definition~\ref{d.flipfloptail}. 
Then for every plaque $D\in \fF$ 
there is a $\widetilde D_0\in \fF$ such that
\begin{itemize}
\item
{$f^{-i}(\widetilde D_0)\subset U$ for all $i=0,\dots, N$,}
\item
$f^{-N} (\widetilde D_0)\subset D$, and
\item
for every $x\in f^{-N} (\widetilde D_0)$ the set
$\{x,f(x),\dots, f^N(x)\}$ is $\delta_{n+1}$-dense in $Y$.
\end{itemize}

In what follows we will focus on the control of Birkhoff averages.
Note that the average of $\varphi_N$ in 
$f^{-N} (\widetilde D_0)$ is uniformly upper bounded by the maximum of 
$|\varphi |$ in $X$ denoted by
$ \max |\varphi |$.

Recall the definition of  $t(\alpha)$
in Lemma~\ref{l.distortion} and 
fix $k_0$ large enough such that
\begin{equation}
\label{e.k0first}
k_0 > t \left( \frac{\alpha_{n+1}}{6}\right)\, \frac{1}{T_n}.
\end{equation}
Since $\beta_n<\frac{\alpha_n}{2}$ we have that if $k_0$ is large enough then
for every $k\ge k_0$ it holds
\begin{equation}
\label{e.k0second}
\beta_n<
\frac{- N \max |\varphi| + k \,  T_n \dfrac{\alpha_n}{2}}{N+ k\, T_n}<
\frac{N \max |\varphi| + k   \, T_n\,  \alpha_n}{N+ k \, T_n}<
3\, \frac{\alpha_n}{2}.
\end{equation}


\begin{clai}\label{c.icerm}
Consider  $k_0$ satisfying equations \eqref{e.k0first} and \eqref{e.k0second}.
Then for every  $k\ge k_0$ 
 there is plaque $\widetilde D_1\in \fF$
such that
\begin{enumerate}
\item
$f^{-kT_n} (\widetilde D_1)\subset \widetilde D_0$;
\item
for every $x\in 
f^{-k\,T_n-N} (\widetilde D_1)$ it holds
$$
\frac{1}{N+k\, T_n}\, \sum_{j=0}^{N+k\, T_n-1}
\varphi (f^j(x))\in \, \left[ \beta_{n}, \frac{3\,\alpha_{n}}2 \right];
$$
\item 
for every $x,y\in 
f^{-kT_n-N} (\widetilde D_1)$.
$$
\frac{1}{N+k \, T_n}\, \sum_{j=0}^{N+k\, T_n-1}
|\varphi (f^j(x))-\varphi (f^j(y))|
< \frac{\alpha_{n+1}}{6}.
$$
\end{enumerate}
\end{clai}

\begin{proof}
To prove the first item we 
consider  the concatenation of $k$ 
orbit-segments 
$(\widehat{D}_i, \widehat D_{i+1})$, $i=0,\dots, k-1$, 
of size $T_n$  associated to $\omega_i=+$ given by
 the induction hypothesis and such that
 $\widehat D_0=\widetilde D_0$.  
 These pairs  are obtained inductively: 
 assumed defined the pair  $(\widehat D_i,\widehat D_{i+1})$ we  apply 
 induction hypothesis to the final plaque $\widehat D_{i+1}$. 
 To conclude it is enough to take $\widetilde D_1=\widehat D_{k}$.
 See Figure~\ref{f.claimicerm}.

To prove item (b) note that
each plaque-segment  $(\widehat D_i, \widehat D_{i+1})$ 
is $([\frac{\alpha_{n}}2,\alpha_{n}],T_n)$-controlled. Hence
the averages in these segments belong to 
$[\frac{\alpha_n}{2},\alpha_n]$.  Now the control of averages follows  from  the choice of $k_0$ in
 equation~\eqref{e.k0second}.

Item (c) follows from the distortion Lemma~\ref{l.distortion}
 and the choice of $k_0$ in
equation~\eqref{e.k0first}. This ends the proof of the claim.
\end{proof}
%
%

\begin{figure}
\begin{minipage}[h]{\linewidth}
\centering
 \begin{overpic}[scale=.35, 
 ]{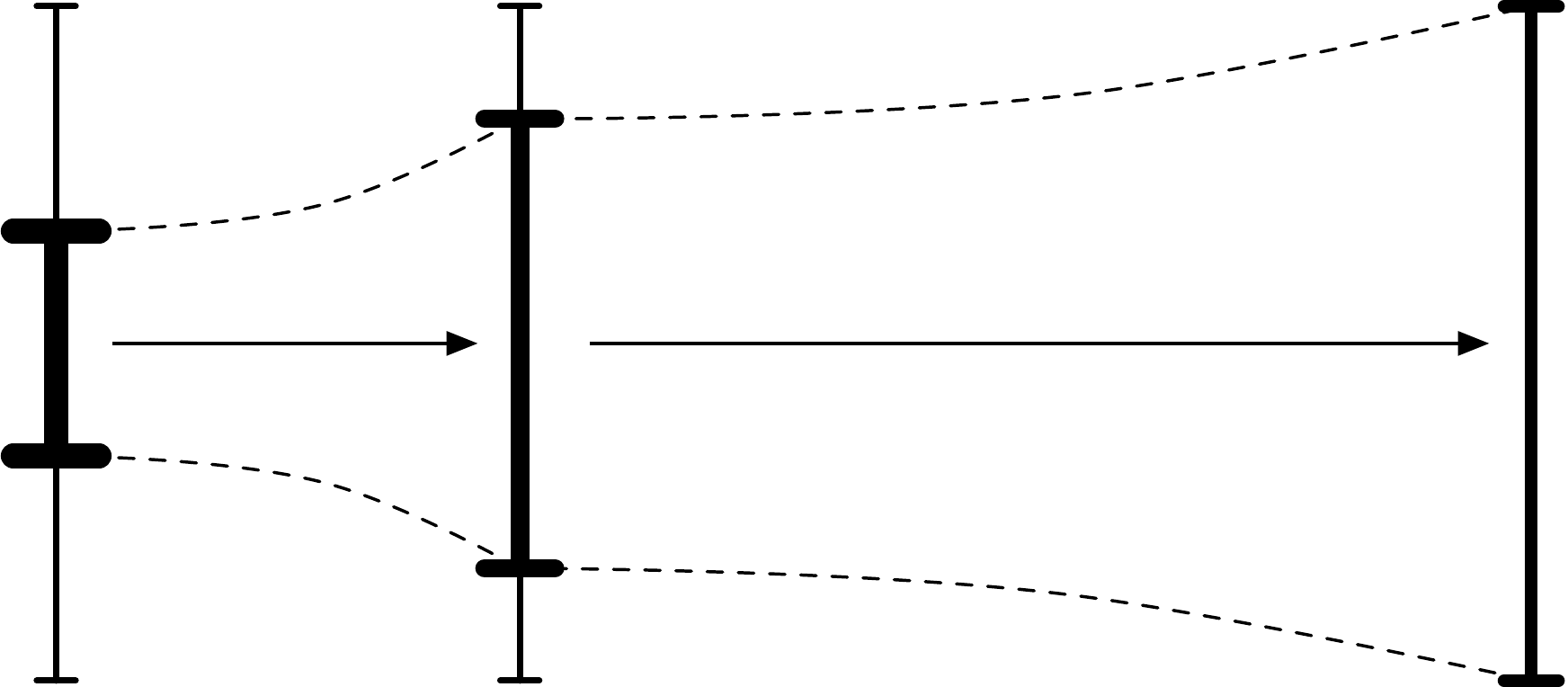}
			\put(-3,43){\small$D$}
			\put(88,43){\small$\widetilde{D}_1$}
			\put(24,43){\small$\widetilde{D}_0$}
			\put(18,23.5){\small$N$}
			\put(60,23.5){\small$k \, T_n$}			
  \end{overpic}
\caption{Proof of Claim \ref{c.icerm}}
\label{f.claimicerm}
\end{minipage}
\end{figure}

The proof of the  proposition  now follows arguing exactly as in 
the concatenation result
\cite[Lemma~2.12]{BoBoDi2}.
For completeness, we recall the main arguments involved in this proof.

Note that in Claim~\ref{c.icerm} we can assume that the constant $k_0$
is such that 
\begin{equation}\label{e.choiceofk}
k_0> \max\left\{ 
{\dfrac{6\,\alpha_n}{\alpha_{n+1}}}, t \left( \frac{\alpha_{n+1}}{6}\right)\,\frac{1}{T_n}
\right\}.
\end{equation}

\begin{lemm}
\label{l.inextremis}
For every $k\ge k_0$
there are $i_0$ and a plaque
$D_1\in \fF$ 
such that
$(D,D_1)$ is
$(\big[ \frac{1}{2} \alpha_{n+1},\alpha_{n+1} \big], \widetilde T_{n+1} )$-controlled, 
where
$$
\widetilde T_{n+1}=N+(k+i_0)T_n.
$$
\end{lemm}

\begin{proof}
We just describe the main steps of the proof.
Take $\widehat D_0\eqdef \widetilde D_1$, where $\widetilde D_1$ is the plaque given by 
Claim~\ref{c.icerm}.
By the  induction hypothesis, given any $i\in \{0, \dots, n\}$ 
there is a family of 
 $([-\alpha_n,-\frac{\alpha_n}{2}], T_n)$-controlled
 orbit-segments $(\widehat D_{j},\widehat D_{j+1})$ for $j=0,\dots, k+i-1$. 
 This implies that  
$$
\varphi_{T_n} \big( f^{-T_n}(\widehat D_{j+1}) \big) \subset
\left[-\alpha_n,-\frac{\alpha_n}2\right].
$$ 

 Consider the orbit-segment $(D,\widehat D_{k+i})$ 
 of length $N+(k+i)\,T_n$ obtained concatenating 
 $(D,\widehat D_0=\widetilde D_1)$ 
 and 
 $(\widehat D_j,\widehat D_{j+1})$, $j=0,\dots, k+i-1$, that is,
 $$
 (D,\widehat D_{k+i})=(D,\widehat D_0)
 \ast
 (\widehat D_0,\widehat D_{1})\ast \cdots \ast (\widehat D_{k+i-1},\widehat D_{k+i}).
 $$
 
 The choice of $k_0$ in \eqref{e.choiceofk} 
 ($k_0\,\alpha_{n+1}> 6 \, \alpha_n$)
  immediately implies that  every
  $x\in f^{-N-(k+i)T_n}(\widehat D_{k+i})$ satisfies the  following implication, 
 \begin{equation}\label{e.thefollowingholds}
 \varphi_{N+(k+i)T_n}(x)\geq \frac 56 \alpha_{n+1}
 \quad 
 \implies
 \quad
 \varphi_{N+(k+i+1)T_n}(x)\geq \frac 46\alpha_{n+1}.
 \end{equation}
 Let 
 \begin{equation*}
 \ell_0\eqdef \dfrac{3\, (N+k\, T_n) \, \alpha_n}{T_n\, (\alpha_n+\alpha_{n+1})}.
 \end{equation*}
 By item (b) in Claim~\ref{c.icerm} we have that 
 $$
 \beta_n \le 
 \varphi_{N+k\, T_n}(x)\le \frac{3\,\alpha_n}{2}
 \quad \mbox{for every $x\in f^{-N-k\,T_n}(\widehat D_0)$.}
 $$
A simple calculation gives that 
for every $\ell > \ell_0$ 
 it holds
 \begin{equation}\label{e.needtoexplain}
 \varphi_{N+(k+\ell)\,T_n}(x)<\frac 12\, \alpha_{n+1}
 \quad \mbox{for every} \quad
 x\in f^{-N-(k+\ell)\, T_n}(\widehat D_{k+\ell}).
\end{equation}
  Equation
 \eqref{e.needtoexplain} implies that there is a first $i_0$ with 
 $\varphi_{N+(k+i_0)\, T_n} (\bar x) \le \frac{5}{6} \, \alpha_{n+1}$ for some $\bar x
 \in f^{-N-(k+i_0)\,T_n}(\widehat D_{k+i_0})$.
 From Equation \eqref{e.thefollowingholds} we get
 \begin{equation}
 \label{e.previousterca}
 \varphi_{N+(k+i_0)\,T_n}(\bar x)\in
 \left[ \frac 46\alpha_{n+1},\frac 56\alpha_{n+1} \right].
 \end{equation}
 By the choice of $k_0$ in ~\eqref{e.k0first}, 
 the distortion of  
 $\varphi_{N+(k+i_0)T_n}$ in  $f^{-N-(k+i_0)\, T_n}(\widehat D_{k+i_0})$ 
 is strictly less than
 $\frac 16 \alpha_{n+1}$. Equation~\eqref{e.previousterca} now implies that 
 $$
 \varphi_{N+(k+i_0)\,T_n}(x)\in\left[ \frac 12\alpha_{n+1},\alpha_{n+1}\right]
 \quad
\mbox{for every $x\in f^{-N-(k+i_0)\,T_n}(\widehat D_{k+i_0})$.} 
 $$
 The lemma follows taking $D_1=\widehat D_{k+i_0}$.
\end{proof}

Take $i_0$ as in Lemma~\ref{l.inextremis}, 
$k_0$ as in \eqref{e.choiceofk}, $k \ge k_0$ sufficiently large, and define
$$
\widetilde T_{n+1}\eqdef N+(k+i_0)\, T_n> \tau_n,
$$
where $\tau_n$ is as in 
Lemma~\ref{l.necessario}. 
Consider the plaque $D_1$ given by Lemma~\ref{l.inextremis} such that
$(D,D_1)$ is
$(\big[ \frac{1}{2} \alpha_{n+1},\alpha_{n+1} \big], \widetilde T_n )$-controlled.
Using the induction hypothesis, consider a plaque $D_2$ such that
$(D_1,D_2)$ is $(\big[ \frac{1}{2} \alpha_{n},\alpha_{n} \big], T_n )$-controlled.
As $\widetilde T_{n+1}>\tau_n$, Lemma~\ref{l.necessario} 
implies that $(D_0,D_2)$ is 
$(\big[\frac{1}{2} \alpha_{n+1},\alpha_{n+1} \big], \widetilde T_{n+1}+T_n)$-controlled.
Repeating this last argument $j$ times (any $j$)
we get
a plaque $D_2(j)$ 
such that $(D,D_2(j))$ is 
$(\big[\frac{1}{2} \alpha_{n+1},\alpha_{n+1} \big], \widetilde T_{n+1}+j\,T_n)$-controlled.
%
This completes the average control in the proposition and ends 
its proof. 
\end{proof}


Given a pattern  $\fP$ and its set of marked points $M(\fP)$ we let
$e_{ M(\fP)}$ the right extreme of $M(\fP)$.

\begin{prop}[Concatenation of patterns] 
\label{p.l.induction2} 
Under the assumptions of Theorem~\ref{t.p.flipfloppattern},
assume that  for every $i=0,\dots, n$ there is defined  $T_i\in \NN$ satisfying
the conclusions  in the theorem.

Then there is a constant  $k_0$ such that for every $k\ge k_0$,
every family $\{ \mathfrak{P}_i\}_{i=1}^k$ of $T_n$-patterns,
every $\omega \in \{-,+\}$, 
and every plaque $D\in \mathfrak{F}$ there is a sequence
of symbols $(\omega_i)_{i=1,\dots,k}$, $\omega_i\in \{-,+\}$, 
satisfying the following property:

Consider  the family of sets $\cJ_i=\{J_{i,j}\}$,
$i=1,\dots,k$  and $j=1,\dots,n$,  defined
by
\begin{itemize}
\item
$J_{i,j}\eqdef \big[-\alpha_j, \frac{-1}{2}\,\alpha_j\big] \cup
\big[\frac{1}{2}\, \alpha_j, \alpha_j\big]$  if $j<n$ and
\item
$J_{i,n}\eqdef \omega_i\, \big[ \frac{1}{2}\, \alpha_n, \alpha_n \big]$.
\end{itemize}

Let  $\cD_{\mathfrak{P}_1}=\{D_{1,j}\}_{j\in M(\mathfrak{P}_1)}$ be 
a family  of $(\cJ_1,\mathfrak{P}_1)$-controlled plaques given by the 
induction hypothesis associated to 
the plaque $D_{1,0}=D$.

Let $\cD_{\mathfrak{P}_i}=\{D_{i,j}\}_{j\in M(\mathfrak{P}_i)}$ be the family of 
$(\cJ_i,\mathfrak{P}_i)$-controlled plaques associated to 
the final plaque 
$D_{i-1,e_{ M(\fP_{i-1})}}=D_{i,0}$ 
of the family $\cD_{\mathfrak{P}_{i-1}}=\{D_{i-1,j}\}_{j\in M(\mathfrak{P}_{i-1})}$ given by the induction hypothesis.

Then  the plaque-segment
$(D, D_{k,e_{ M(\fP_k)}})$ is  $(\omega\, [ \frac{1}{2}\,  \alpha_{n+1}, 
\alpha_{n+1} ],k\,T_n)$-controlled. 
\end{prop}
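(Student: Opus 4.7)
The plan is a greedy sign-choice: I will construct the families $\cD_{\mathfrak{P}_1},\dots,\cD_{\mathfrak{P}_k}$ one at a time, choosing each $\omega_i\in\{+,-\}$ on the fly based on the partial Birkhoff average accumulated so far. Assume $\omega=+$ (the opposite sign is handled symmetrically). Suppose inductively that $\cD_{\mathfrak{P}_1},\dots,\cD_{\mathfrak{P}_{i-1}}$ have been built, producing a plaque-segment $(D,D_{i-1,e_{M(\mathfrak{P}_{i-1})}})$ of length $(i-1)T_n$. Pick any reference point $y\in f^{-(i-1)T_n}(D_{i-1,e_{M(\mathfrak{P}_{i-1})}})$ and set $A_{i-1}:=\varphi_{(i-1)T_n}(y)$; by Lemma~\ref{l.distortion} applied with target error $\alpha_{n+1}/16$, this value is independent of the choice of $y$ up to that error, provided $(i-1)T_n\ge t(\alpha_{n+1}/16)$. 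I then define the greedy rule
$$
\omega_i:=\begin{cases}+ & \text{if } A_{i-1}\le \tfrac{3}{4}\alpha_{n+1},\\ - & \text{otherwise,}\end{cases}
$$
and apply the induction hypothesis of Theorem~\ref{t.p.flipfloppattern} to produce $\cD_{\mathfrak{P}_i}$ starting at $D_{i-1,e_{M(\mathfrak{P}_{i-1})}}$ with this sign. Top-level control of $\mathfrak{P}_i$ forces the average $a_i$ of $\varphi_{T_n}$ over $f^{-T_n}(D_{i,e_{M(\mathfrak{P}_i)}})$ to lie in $\omega_i[\alpha_n/2,\alpha_n]$, and the identity $iA_i=(i-1)A_{i-1}+a_i$ governs the evolution of the sequence of partial averages.

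The next step is to analyse this one-dimensional dynamical system. Once $i\ge i_\ast:=\lceil 8\alpha_n/\alpha_{n+1}\rceil$, every jump satisfies
$$|A_i-A_{i-1}|=|a_i-A_{i-1}|/i\le 2\alpha_n/i\le \alpha_{n+1}/4,$$
which is exactly the half-width of the target band. A short case analysis yields the following facts for $i\ge i_\ast$: first, if $A_{i-1}\in[\alpha_{n+1}/2,\alpha_{n+1}]$ the greedy rule pushes $A_i$ towards $3\alpha_{n+1}/4$, changing it by at most $\alpha_{n+1}/4$, so $A_i$ stays in $[\alpha_{n+1}/2,\alpha_{n+1}]$; second, if $A_{i-1}>\alpha_{n+1}$ then $\omega_i=-$ and $a_i-A_{i-1}\le-\alpha_n/2-\alpha_{n+1}$, hence $A_i\le A_{i-1}-\alpha_n/(2i)$ while also $A_i\ge A_{i-1}-\alpha_{n+1}/4>3\alpha_{n+1}/4$; third, the case $A_{i-1}<\alpha_{n+1}/2$ is symmetric. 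Since $|A_i|\le\alpha_n$ always, a harmonic-sum comparison of the form $\sum_{j=i_\ast+1}^{i_\ast+N}\alpha_n/(2j)\ge 2\alpha_n$ produces a number $N=N(\alpha_n,\alpha_{n+1})$ such that the ``out-of-band'' regime cannot persist for more than $N$ consecutive steps; once $A_i$ enters the target band, the first case pins it there. Setting $k_0:=i_\ast+N+\lceil t(\alpha_{n+1}/16)/T_n\rceil+1$, a quantity depending only on $n$ and the sequence $(\alpha_j)$, therefore guarantees $A_k\in[\alpha_{n+1}/2,\alpha_{n+1}]$ for every $k\ge k_0$; Lemma~\ref{l.distortion} then transfers this estimate, with the $\alpha_{n+1}/16$ distortion error absorbed, to every point of $f^{-kT_n}(D_{k,e_{M(\mathfrak{P}_k)}})$, delivering the required $(\omega[\tfrac{1}{2}\alpha_{n+1},\alpha_{n+1}],kT_n)$-control.

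The hardest part will be keeping the feedback loop rigorous under distortion: the choice of $\omega_i$ is dictated by the pointwise value $A_{i-1}$ sampled at one base point, yet the final conclusion must hold uniformly on every point of $f^{-kT_n}(D_{k,e_{M(\mathfrak{P}_k)}})$. The separation of scales built into the hypothesis $\alpha_{n+1}<\alpha_n/4<\beta_n<\alpha_n/2$ provides exactly the $\Theta(\alpha_{n+1})$ slack needed to absorb the Lemma~\ref{l.distortion} error within the $\alpha_{n+1}/4$ safety margin surrounding the target band $[\alpha_{n+1}/2,\alpha_{n+1}]$---playing the same calibration role as the parameter $\alpha_{n+1}/6$ in Lemma~\ref{l.necessario} and Proposition~\ref{p.l.induction1}. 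Once that accounting is in place, the one-dimensional greedy dynamics is elementary, and the stated $k_0$ indeed depends neither on the particular patterns $\mathfrak{P}_i$ nor on the starting plaque $D$.
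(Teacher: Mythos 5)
Your greedy ``thermostat'' rule is a legitimate and essentially complete route to the proposition, and it differs from the paper's argument in a useful way. The paper (following Lemma~2.12 of \cite{BoBoDi2}) first takes $\omega_i=+$ for a long initial block, so that the running average sits in $[\alpha_n/2,\alpha_n]$ and the distortion drops below $\alpha_{n+1}/6$; it then switches to $\omega_i=-$ and rides the average down until it first lands in $[\tfrac46\alpha_{n+1},\tfrac56\alpha_{n+1}]$ (it cannot jump over this window because the increments are $O(\alpha_n/i)$), and finally invokes Lemma~\ref{l.necessario} to keep concatenating while staying in $[\tfrac12\alpha_{n+1},\alpha_{n+1}]$. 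Your feedback rule replaces this ``ramp up, ramp down, then maintain'' scheme by a single forward-invariant-band argument; this buys you the conclusion for all $k\ge k_0$ at once without a separate maintenance lemma, and it is insensitive to the order in which the patterns $\mathfrak{P}_i$ appear. Both proofs rest on the same two pillars: the increment $|A_i-A_{i-1}|\le 2\alpha_n/i$ is eventually much smaller than $\alpha_{n+1}$, and Lemma~\ref{l.distortion} makes the running average essentially point-independent.

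One constant needs retuning. With $i_\ast=\lceil 8\alpha_n/\alpha_{n+1}\rceil$ your invariant band is exactly $[\alpha_{n+1}/2,\alpha_{n+1}]$, with no interior margin: the reference value $A_k$ can end up at an endpoint, and then neither the $\alpha_{n+1}/16$ uncertainty in the threshold test (the sign is decided from one sample point, while the recursion $iA_i=(i-1)A_{i-1}+a_i$ is exact only along a single orbit) nor the final $\alpha_{n+1}/16$ transfer to all of $f^{-kT_n}(D_{k,e_{M(\mathfrak{P}_k)}})$ can be absorbed. Taking, say, $i_\ast\ge 16\alpha_n/\alpha_{n+1}$ makes the jump at most $\alpha_{n+1}/8$, so the attracting band around the threshold $3\alpha_{n+1}/4$ becomes $[\tfrac{9}{16}\alpha_{n+1},\tfrac{15}{16}\alpha_{n+1}]$ even after allowing the decision error, and the remaining $\alpha_{n+1}/16$ on each side absorbs the final distortion; this plays exactly the role of the $\alpha_{n+1}/6$ calibration in the paper's version. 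With that adjustment your argument closes.
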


\begin{proof}
The proof of follows arguing  as in 
\cite[Lemma~2.12]{BoBoDi2}. We now recall the strategy for $\omega=+$ (the case 
$\omega = -$ is analogous and thus  omitted). 
We start by taking a sequence of  signals $\omega_i=+$ for every $i$. This sequence is  sufficiently large 
to guarantee distortion smaller than $\frac{1}{6}\alpha_{n+1}$
in the pre-images of the plaques. We stop for some large $i_0$.
Note that the averages are
in $\big[ \frac{1}{2}\alpha_n, \alpha_n \big]$. 
Thereafter for $i>i_0$ 
we consider a sequence of $\omega_i=-$ as in the proof of the previous lemma. 
In this way the averages of $\varphi$ start to decrease.  We stop 
when this average at some point of the plaque belongs to 
$\big[ \frac{4}{6}\alpha_{n+1},\frac{5}{6}\alpha_{n+1} \big]$
(the key point is that the averages can not jump from above $\frac{5}{6}\alpha_{n+1}$
to below $\frac{4}{6}\alpha_{n+1}$, this is because for large $i$ the cone
of size $\big[ \frac{4}{6}\alpha_{n+1},\frac{5}{6}\alpha_{n+1} \big]$ is very width, this is exactly the argument in \cite{BoBoDi2}).
The distortion control implies that the average of $\varphi$ in the 
whole pre-image of the plaque is contained in 
$\big[\frac{1}{2} \alpha_{n+1},\alpha_{n+1}\big]$. 

We conclude the proof using Lemma~\ref{l.necessario}: 
we can continue concatenating plaque-segments keeping the averages in 
$\big[ \frac{1}{2}\alpha_{n+1},\alpha_{n+1} \big]$.   This completes  the sketch of the proof of the 
proposition.
\end{proof}

\smallskip

\noindent{\emph{End of the proof of Theorem~\ref{t.p.flipfloppattern}.}}
The definition of  the scale $(T_n)_n$ is done inductively on $n$. 
Assuming that $T_i$ is defined  for $i\leq n$, we
define $T_{n+1}$  as follows.
Take  $\widetilde T_{n+1}$  as in Lemma~\ref{l.inextremis} and
$k_0$ as in Claim~\ref{c.icerm}.
Then define $T_{n+1}$ as  a multiple of $T_n$ such that
$$
T_{n+1}\ge \max\{ \widetilde T_{n+1},k_0 \, T_n\}
\quad
\mbox{and}
\quad 
\frac{T_{n+1}}{T_n}\ge
(n+1)\, \frac{T_n}{T_{n-1}}.
$$

Take now  a  
$T_{n+1}$-pattern $\mathfrak P= (\cP, \iota)$,
$\omega \in \{+,- \}$, and a plaque $D\in \mathfrak{F}$.
There are two cases to consider. 
If $\cP$ is the trivial $T_{n+1}$-pattern we just need to apply Proposition~\ref{p.l.induction1}.
Otherwise, the $T_{n+1}$-pattern  $\mathfrak{P}$ is a concatenation 
of a sequence of at least $k_0$ $T_{n}$-patterns. The theorem follows by applying 
Proposition~\ref{p.l.induction2} to this family of $T_n$-patterns.
\end{proof}

Bearing in mind Remark~\ref{r.initialcompatibility}, 
we are interested 
to get
plaque-segments associated  to $T_n$-patterns 
respecting  the plaque-segments associated to its initial subpatterns. 
A slight variation of the proof of 
Proposition~\ref{p.l.induction2} implies the following addendum:

\begin{adde}[Extension of initial subpatterns]
\label{p.extension} With the hypotheses of Theorem~\ref{t.p.flipfloppattern}, 
the scale  $\cT=(T_n)_{n\in \NN}$ can be chosen satisfying the 
following additional property: 

Let $\mathfrak P_1$ be  a $T_{n}$-pattern   and 
$\mathfrak P_2$ be a $T_{n+1}$-pattern such 
that $\mathfrak P_1$ is the initial 
$T_n$-subpattern of $\mathfrak P_2$.  

Consider a flip-flop family $\mathfrak{F}$ and 
$D_0$ a plaque $\mathfrak{F}$.
Let $\{D_a\}_{a\in M(\mathfrak P_1)}$ be a family of plaques 
associated to the pattern $\mathfrak P_1$ starting at $D_0$  given by 
Theorem~\ref{t.p.flipfloppattern}.

Then for every $\omega\in \{+,-\}$, there is a family $\{\widetilde D_a\}_{a\in M(\mathfrak P_2)}$ of plaques 
associated to the pattern $\mathfrak P_2$ satisfying the conclusion of 
Theorem~\ref{t.p.flipfloppattern}, starting at $D_0$ and  such that  
$$
\widetilde D_a= D_a, \quad \mbox{for every $a\in M(\mathfrak P_1)$}.
$$
\end{adde}

This result allows us  to 
choose the family of plaques associated to a pattern extending
the ones associated to its initial subpatterns.

\section{Flip-flop families with sojourns: 
proof of Theorem~\ref{t.flipfloptail} }\label{s.flipfloptail}

In this section we prove Theorem~\ref{t.flipfloptail}, Corollary~\ref{c.flipfloptail}, and
Proposition~\ref{p.r.hyperboliclike}.

\subsection{Proof of Theorem~\ref{t.flipfloptail}}
Consider a homeomorphism $f\colon X \to X$  defined on a compact metric space $(X,d)$, 
a continuous function $\varphi\colon X \to \RR$, 
and a flip-flop family $\fF=\fF^+\bigsqcup\fF^-$  
with sojourns along a compact subset $Y$ of $X$ associated to $\varphi$ and $f$.
We need to see that
every plaque $D\in \fF$ contains a point $x_D$ that is 
controlled at any scale 
with a long sparse tail with respect to $\varphi$ and $Y$.

Fix a sequences of strictly positive numbers $(\alpha_n)_n$ 
and $(\beta_n)_n$ such that
$$
0<\alpha_{n+1}< \frac{\alpha_n}{4}<
\beta_n<\frac{\alpha_n}2.
$$
Associated to these sequences we consider the family of control 
intervals $\cJ_n=\{J_i\}_{i\in\{0,\dots,n\}}$ defined as in Theorem~\ref{t.p.flipfloppattern}.
We also take 
an arbitrary sequence of positive numbers $(\delta_n)_n$ converging to $0$.
Denote by $\cT=(T_n)_{n\in \NN}$ the scale associated to these sequences given  by Theorem~\ref{t.p.flipfloppattern}.

By Lemma~\ref{l.tailexistence} there are a sequence $\bar \epsilon=(\varepsilon_n)_n$ and  
a $\cT$-long  $\bar \varepsilon$-sparse tail  $R_\infty$. 
Let $\mathfrak{P}_n$ be the sequence of initial patterns associated to the tail $R_\infty$ given
by Remark~\ref{r.initial}.
Let
$M (R_\infty)$ be the set of marked points of the components of $\fP_n$,  %
$$
M(R_\infty) \eqdef 
\bigcup_{0}^{\infty} M (\fP_n).
 $$
 
\begin{lemm}\label{l.infinitecontrol}  
For every plaque $D_0\in\fF$ 
there is a sequence $(D_a)_{a\in M(R_\infty)}$ of plaques of $\fF$
such that for every $n$ the subfamily 
$(D_a)_{a\in  M( \mathfrak{P}_n)}$
is $(\cJ_n, \fP_n)$-controlled.
\end{lemm}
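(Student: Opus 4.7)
The plan is to construct the family $(D_a)_{a\in M(R_\infty)}$ by induction on $n$, producing compatible families indexed by the nested sets $M(\fP_n)$ and then taking the union. The two tools that drive the argument are Theorem~\ref{t.p.flipfloppattern} (which supplies a controlled family of plaques for a single pattern) and Addendum~\ref{p.extension} (which lets us extend a family associated to an initial subpattern to a family associated to the larger pattern, without altering the previously chosen plaques).

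First, I would set up the base case $n=0$: apply Theorem~\ref{t.p.flipfloppattern} to $\fP_0$, the plaque $D_0$, and an arbitrary choice of sign $\omega\in\{+,-\}$, to obtain a family $(D_a)_{a\in M(\fP_0)}$ that starts at $D_0$ and is $(\cJ_0,\fP_0)$-controlled. For the inductive step, suppose $(D_a)_{a\in M(\fP_n)}$ has been defined and is $(\cJ_n,\fP_n)$-controlled, starting at $D_0$. By Remark~\ref{r.initialcompatibility}, $\fP_n$ is the initial $T_n$-subpattern of $\fP_{n+1}$. Addendum~\ref{p.extension} then produces a family $(\widetilde D_a)_{a\in M(\fP_{n+1})}$, still starting at $D_0$, that is $(\cJ_{n+1},\fP_{n+1})$-controlled and satisfies $\widetilde D_a=D_a$ for every $a\in M(\fP_n)$. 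Rename $\widetilde D_a$ as $D_a$ to complete the inductive step.

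Before concluding, I would verify the set-theoretic compatibility $M(\fP_n)\subset M(\fP_{n+1})$, which is what makes the extension meaningful. Every $\fP_n$-initial point $j=kT_i$ lies in $[0,T_n-1]$, and because the restriction of $\fP_{n+1}$ to this sub-interval coincides with $\fP_n$, the admissibility of $[j,j+T_i-1]$ in $\fP_n$ is the same as its admissibility in $\fP_{n+1}$; hence $j$ remains initial for $\fP_{n+1}$. The remaining marked point $T_n$ of $\fP_n$ is the right endpoint of the initial sub-interval and is necessarily the starting point of some element of the partition of $\fP_{n+1}$, hence also $\fP_{n+1}$-initial. Consequently $M(\fP_n)\subset M(\fP_{n+1})$, and by Remark~\ref{r.initial} the union $\bigcup_n M(\fP_n)$ is precisely $M(R_\infty)$.

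With compatibility in hand, the inductive construction yields a single well-defined sequence $(D_a)_{a\in M(R_\infty)}$: each $D_a$ is assigned at the first stage $n$ for which $a\in M(\fP_n)$ and is never redefined at later stages. By construction, for every $n$ the sub-family $(D_a)_{a\in M(\fP_n)}$ is $(\cJ_n,\fP_n)$-controlled, which is exactly the conclusion of the lemma. There is no substantive obstacle beyond correctly invoking the addendum and checking the nesting $M(\fP_n)\subset M(\fP_{n+1})$; all the hard analytic work (the distortion estimates and the sign bookkeeping that give the controlled averages) has already been absorbed into Theorem~\ref{t.p.flipfloppattern} and Addendum~\ref{p.extension}.
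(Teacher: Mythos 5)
Your proof is correct and follows exactly the paper's argument: apply Theorem~\ref{t.p.flipfloppattern} to the initial pattern $\fP_0$ and then inductively extend via Addendum~\ref{p.extension}, using the compatibility of the initial subpatterns (Remark~\ref{r.initialcompatibility}). Your explicit verification that $M(\fP_n)\subset M(\fP_{n+1})$ is a detail the paper leaves implicit, but it is a welcome addition rather than a deviation.
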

\begin{proof}
Apply first Theorem~\ref{t.p.flipfloppattern} to construct the  family associated to 
the pattern $\mathfrak{P}_0$.
Thereafter 
 inductively apply Addendum~\ref{p.extension}
to construct 
the family of sets associated to $\mathfrak{P}_{n+1}$ extending the family constructed 
for $\mathfrak{P}_n$.
\end{proof}

By the  expansion property \eqref{i.defff2}
in Definition~\ref{d.flipfloptail} of  a flip-flop family with sojourns  
we have that
\begin{equation}\label{e.justapoint}
\bigcap_{a\in M (R_\infty)} f^{-a}(D_a)=\{x_D\}\subset D_0.
\end{equation}
By construction, the point $x_D$
is controlled at any scale with long sparse tail $R_\infty$ with respect to $\varphi$ and $Y$,
proving Theorem~\ref{t.flipfloptail}.  \qed

 \subsection{Proof of Corollary~\ref{c.flipfloptail}}
 Let   $\mu$ be  any weak$\ast$-accumulation  point of
 the family of empirical measures
$(\mu_n(x_D))_n$.
By Theorem~\ref{t.accumulation},  for $\mu$-almost every point $x$ its  Birkhoff average 
$\varphi_\infty(x)$ is zero 
its orbit   is dense in $X$. This immediately implies that almost every component $\nu$
of the ergodic decomposition of $\mu$ has full support and $\int \varphi \,d\nu=0$.
 \qed

  \subsection{Proof Proposition~\ref{p.r.hyperboliclike}}
Given $t\in (\alpha, \beta)$ consider $\alpha< \alpha_t <t< \beta_t <\beta$.  Consider
 small cylinders $C(\alpha_t)$ and $C(\beta_t)$ where the map $\varphi$ is less than $\alpha_t$ and bigger than $\beta_t$, respectively.  Consider now unstable subsets of these cylinders (i.e., the intersection of 
 the cylinders with unstable sets). For sufficiently large
$m$ we have that these sets are a flip-flop family relative to $f^m$.  Now it is enough to apply either 
the criterion in \cite{BoBoDi2} (to get item (a)) or to apply Corollary~\ref{c.flipfloptail} (to get item (b)).

\section{Proof of Theorem~\ref{t.homoclinic}:
Flip-flop families and homoclinic relations}
\label{s.flip-flophomoclinic}

The goal of this section is to prove
 Theorem~\ref{t.homoclinic}.
Consider $f\in \Diff^1(M)$ and a pair of hyperbolic periodic points $p$ and $q$ 
of $f$ that are homoclinically related and a 
continuous function  $\varphi\colon M\to \RR$ such that  
$\int \varphi\, d\mu_{\cO(p)}  <t<
\int \varphi \, d\mu_{\cO(q)} $
(recall that $\mu_{\cO(p)}$ and $\mu_{\cO(q)}$ are the 
unique $f$-invariant probability measures supported on the orbits $p$ and $q$, respectively). 
 For notational simplicity, let us 
assume that the periodic points $p$ and $q$ are fixed points. In this case the assumption
above just means $\varphi(p)<t<\varphi(q)$. After replacing $\varphi$ by the map $\varphi_t=\varphi-t$, 
to prove the theorem it is enough to get an ergodic measure $\mu_t$ whose support is 
$H(p,f)$ such that $\int \varphi_t\, d\mu_t=0$. Thus, in what follows, we can assume that $t=0$
and hence $\varphi(p)<0<\varphi(q)$.

\subsection{Flip-flop families obtained from homoclinic relations}
\label{ss.flip-flopfromhomoclinic}
To prove the theorem we construct a flip-flop family associated to $\varphi$ and $f^n$ for some $n>0$.
We begin by recalling some constructions from \cite{BoBoDi2}.

\subsubsection{The space of discs $\cD^i(M)$}
\label{sss.spaceofdiscs}
Recall that $M$ is a closed and compact Riemannian manifold, let $\dim (M)=d$.
For each $i\in\{1,\dots,d-1\}$ denote by $\cD^i(M)$  the set of 
$i$-dimensional (closed) discs $C^1$-embedded in $M$.
In the space $\cD^i(M)$ the standard  $C^1$-topology is defined as follows,
given a disc $D\in \cD^i(M)$ its neighbourhoods  are of the form 
$\{f(D) \colon  f \in \cW\}$, 
where $\cW$ is a neighbourhood of the identity map in $\Diff^1(M)$.
In  \cite{BoBoDi2}  it is introduced the following  metric $\fd$ on the space $\cD^i(M)$, 
$$ 
(D_1,D_2) \mapsto
\fd(D_1,D_2)\eqdef  d_\mathrm{Haus}(TD_1,TD_2)+d_\mathrm{Haus}(T\partial D_1,T\partial D_2),
$$
where
$D_1,D_2\in \cD^i(M)$,  
the tangent bundles $TD_i$ and $T\partial D_i$ are considered as compact subsets of the corresponding 
Grassmannian bundles,  and $d_\mathrm{Haus}$ denotes the corresponding 
Hausdorff distances. The distance $\fd$ behaves nicely for the composition of diffeomorphisms:
if $D$ and $D^\prime$ are close the same holds for $f(D)$ and $f(D^\prime)$, see
\cite[Proposition 3.1]{BoBoDi2}.

Given a family of discs $\fD\subset \cD^i(M)$ and $\eta>0$,  we denote by $\cV^\fd_\eta(\fD)$ the open
$\eta$-neighbourhood of $\fD$ with respect to the distance $\fd$, 
$$
\cV^\fd_\eta(\fD)\eqdef
\{D\in \cD^i(M)\colon \fd(D,\fD)<\eta\}.
$$

\subsubsection{Proof of Theorem~\ref{t.homoclinic}}
Since $\varphi(p)<0<\varphi(q)$, there are
small local unstable manifolds $W^\mru_{loc}(p,f)$ and $W^\mru_{loc}(q,f)$ 
of $p$ and $q$ such that $\varphi$ is strictly negative in $W^\mru_{loc}(p,f)$ and strictly positive in $W^\mru_{loc}(q,f)$.
Similarly for the local stable manifolds $W^\mrs_{loc}(p,f)$ and $W^\mrs_{loc}(q,f)$ of $p$ and $q$.

Since $p$ and $q$ are homoclinically related there $\ell_0\ge 0$
and small discs $\De^\mrs_p \subset W^\mrs(p)$ and $\De^\mrs_q\subset 
W^\mrs(q,f)$ such that 
the intersections
$\De^\mrs_p\cap f^{\ell_0}(W^\mru_{loc}(q,f))$ and $\De^\mrs_q\cap f^{\ell_0}(W^\mru_{loc}(p,f))$ 
are both transverse and consist of just a point.

For  $\varrho>0$ consider  the
$\varrho$-neighbourhoods 
 $\cV^{\fd}_\varrho(p)\eqdef \cV^{\fd}_\varrho(W^\mru_{loc}(p,f))$ and $\cV^{\fd}_\varrho(q)\eqdef
 \cV^{\fd}_\varrho(W^\mru_{loc}(q,f))$
of the local unstable manifolds of $p$ and $q$ for the distance $\fd$. 
For $\varrho$ small enough, every disc in  
$\cV^{\fd}_\varrho(p)$ intersects transversely 
$\De^\mrs_p$.  We also have that $\varphi$ is uniformly negative (say less than 
$-\alpha<0$) in every disc in this neighbourhood.  
Finally, the derivative of $Df$ is uniformly expanding in restriction to this family of discs. 
There are similar assertions for the discs in $\cV^{\fd}_\varrho(q)$:
every disc of this neighbourhood meets transversely
$\De^\mrs_q$, $\varphi$ is larger than $\alpha>0$ in the discs, and $Df$ is a uniform expansion. 

\begin{rema}\label{r.contraction} 
If 
$\varrho>0$ is small enough then
 there is $\ell_1$  such 
 that  for every $\ell>\ell_1$ the image $f^{\ell}(D)$ of any disc  $D\in  \cV^\fd_\varrho (p)$  contains discs 
 in $ \cV^\fd_\varrho(p)$  and in $\cV^\fd_\varrho(q)$.  
 The same holds  (with the same constant) for discs in $\cV^\fd_\varrho(q)$.  
 This is a well known fact and is the ground of the proof of the existence of 
  unstable manifolds using a graph transformation. In what follows we assume that $\varrho$ satisfies 
  this property. 
  \end{rema}

We consider the following family $\fF=\fF^+\bigsqcup \fF^-$ of discs:
\begin{itemize}
 \item $\fF^-$ is the family of discs in $\cV^\fd_\varrho(p)$ contained in $W^\mru(p,f)\cup W^\mru(q,f)$;
 \item $\fF^+$ is the family of discs in $\cV^\fd_\varrho(q)$ contained in $W^\mru(p,f)\cup W^\mru(q,f)$.
\end{itemize}
Note that as $q$ are homoclinically related
these two families are both infinite.

\begin{prop}\label{p.flipflophomoclinic}
There is $n$ such that
the family $\fF$  is a flip-flop family associated to $\varphi$  and $f^n$ and
 has sojourns (for $f$) along the homoclinic class $H(p,f)$. 
\end{prop}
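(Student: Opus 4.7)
The proof verifies, for a sufficiently large integer $n$, the three conditions of Definition~\ref{d.flipflop} for the family $\fF$ with respect to $\varphi$ and $f^n$, and then establishes the sojourn condition of Definition~\ref{d.flipfloptail} along $H(p,f)$. The integer $n$ is chosen so that $n\ge \ell_1$ from Remark~\ref{r.contraction} and so that $Df^n$ contracts strongly when pulled back along discs in $\cV^\fd_\varrho(p)\cup \cV^\fd_\varrho(q)$.

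\textbf{Flip-flop conditions.} Item~\eqref{i.flipflop1} is immediate from the choice of $\varrho$: one has $\varphi\le -\alpha$ on every disc of $\cV^\fd_\varrho(p)$ and $\varphi\ge \alpha$ on every disc of $\cV^\fd_\varrho(q)$. Item~\eqref{i.flipflop2} follows from Remark~\ref{r.contraction} applied with $\ell=n\ge \ell_1$, combined with the invariance of unstable manifolds: if $D\subset W^\mru(p,f)\cup W^\mru(q,f)$, then so is every sub-disc of $f^n(D)$, and the two sub-discs in $\cV^\fd_\varrho(q)$ and $\cV^\fd_\varrho(p)$ produced by the Remark automatically lie in $\fF^+$ and $\fF^-$. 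Item~\eqref{i.flipflop33} is a consequence of the uniform hyperbolicity of $f$ along all discs in $\cV^\fd_\varrho(p)\cup \cV^\fd_\varrho(q)$: there is a uniform $\lambda>1$ such that backward iteration by $f^{-n}$ contracts these discs by a factor at most $\lambda^{-n}$, so iterated composition yields the required sequence $\zeta_i\to 0$.

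\textbf{Sojourn construction.} Fix $\delta>0$. Since $H(p,f)$ is the closure of the hyperbolic periodic points homoclinically related to $p$, choose a finite set $\{q_1,\dots,q_k\}$ of such points whose orbits are $\delta/2$-dense in $H(p,f)$; being all homoclinically related to $p$, they are pairwise homoclinically related. Given $D\in\fF$ and a target sign $\omega\in\{+,-\}$, I build $\widehat D^\omega\subset D$ inductively. After iterating $D$ forward a uniformly bounded number of times, one obtains a disc crossing $W^\mrs(q_1,f)$ transversely (because $D$ is $\fd$-close to $W^\mru_{loc}(p,f)$ or $W^\mru_{loc}(q,f)$, and these unstable manifolds accumulate on $H(p,f)$, hence on $W^\mrs(q_1,f)$); the inclination ($\lambda$-)lemma then produces a sub-disc whose forward orbit shadows $W^\mru(q_1,f)$ and passes within $\delta/2$ of the orbit of $q_1$. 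Using successively the transverse intersections $W^\mru(q_i,f)\pitchfork W^\mrs(q_{i+1},f)$, one refines to a sub-disc visiting each $q_i$; and a final heteroclinic/homoclinic step to $p$ or $q$ lands the image in $\fF^\omega$ after a further uniform iteration. Pulling these selections back to $D$ defines $\widehat D^\omega$, and the total sojourn time $N_\delta$ depends only on $\delta$. Item~\eqref{i.defff2} of Definition~\ref{d.flipfloptail} is inherited from the same uniform hyperbolicity used for Item~\eqref{i.flipflop33}.

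\textbf{Main obstacle.} The crux is the uniformity of $N_\delta$ across the infinite family $\fF$. This is guaranteed by the fact that every plaque of $\fF$ is $\fd$-close to $W^\mru_{loc}(p,f)$ or $W^\mru_{loc}(q,f)$, so the inclination-lemma times and the transversality thresholds at each $q_i$ can be made uniform by continuity of $\fd$ under the dynamics and the fact that $\{q_1,\dots,q_k\}$ is finite. A secondary technicality is that at each stage the sub-disc produced must still be $C^1$-close to an unstable leaf so that the next inclination step can proceed; this is arranged by taking sufficiently many iterates at each stage to align tangent spaces, which again costs only a uniform additive constant in $N_\delta$.
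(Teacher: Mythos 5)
Your overall strategy coincides with the paper's: the sign condition comes from the choice of $\varrho$, the Markov property from the graph--transform/inclination--lemma mechanism encoded in Remark~\ref{r.contraction}, and the sojourns from routing each plaque past periodic orbits that are $\delta/2$-dense in $H(p,f)$ with uniformly bounded transition times. The only real variation is in the sojourn step: you chain through a finite family $\{q_1,\dots,q_k\}$ of homoclinically related periodic points whose orbits are jointly $\delta/2$-dense, whereas the paper uses a single periodic point $r_{\delta/2}$ whose orbit is already $\delta/2$-dense (Remark~\ref{r.existenceofperiodicorbits}, obtained from the density of transverse homoclinic points and the associated horseshoes) and makes one trip around its orbit. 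Your version trades that existence statement for $k-1$ extra inclination-lemma transitions; both give a uniform $N_\delta$ for the same reason (finitely many transverse intersections, each with a uniform inclination-lemma time), so this difference is cosmetic.

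The weak point is your treatment of the expansion conditions, item~\eqref{i.flipflop33} of Definition~\ref{d.flipflop} and item~\eqref{i.defff2} of Definition~\ref{d.flipfloptail}. You derive them from ``uniform hyperbolicity of $f$ along all discs in $\cV^\fd_\varrho(p)\cup\cV^\fd_\varrho(q)$'', but in a plaque-segment $D'\subset f^n(D)$ with $D,D'\in\fF$ the intermediate pullbacks $f^{-j}(D')$ for $0<j<n$ do \emph{not} stay in these neighbourhoods: Theorem~\ref{t.homoclinic} assumes no global domination or cone field, so during the transition iterates (the $\lambda$-lemma crossings from one local unstable manifold to another, and the entire sojourn itinerary past the $q_i$) the derivative of $f$ along the discs may contract. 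Hence the assertion that ``backward iteration by $f^{-n}$ contracts by $\lambda^{-n}$'' is not justified iterate by iterate. The paper's fix is a compensation argument: the number of uncontrolled iterates is uniformly bounded (by $n_0$ for the flip-flop property, and by $j_0+\pi+j_1$ for the sojourns), so appending sufficiently many further iterates during which the disc sits in $\cV^\fd_\varrho(p)$ or $\cV^\fd_\varrho(q)$ --- where the restriction of $Df$ to the discs genuinely expands --- makes the full block $f^n\colon f^{-n}(D')\to D'$ uniformly expanding, and the required decay $\zeta_i\to 0$ then follows block by block. Your proof needs this extra step; as written, the expansion estimates are asserted rather than proved.
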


We postpone the proof of Proposition~\ref{p.flipflophomoclinic}
and prove the theorem.

\begin{proof}[Proof of Theorem~\ref{t.homoclinic}] 
Consider the flip-flop family $\fF$ with sojourns along $H(p,f)$
given by Proposition~\ref{p.flipflophomoclinic}. 
Exactly as in the proof of Theorem~\ref{t.flipfloptail} in Section~\ref{s.flipfloptail} we use 
Theorem~\ref{t.p.flipfloppattern}, 
Addendum~\ref{p.extension}, and Lemma~\ref{l.tailexistence} to construct  
a scale $\cT$, a tail $R_\infty$, 
a sequence of increasing patterns $\fP_n$,
 and a family of discs $D_a$, $a\in M(R_{\infty})$ such 
that  the restriction of this 
family to the marked sets $M(\fP_n)$ is controlled by the pattern $\fP_n$ for every $n$ (exactly as in Lemma~\ref{l.infinitecontrol}).
The expansion property in the flip-flop family implies that (recall equation~\eqref{e.justapoint})
$$
\bigcap_{a\in M(R_\infty)} f^{-a}(D_a)=
x_\infty.
$$ 

\begin{clai}$x_\infty \in H(p,f)$. 
\end{clai}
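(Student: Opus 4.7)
The plan is to exhibit $x_\infty$ as a limit of transverse homoclinic/heteroclinic points of $p$, all of which lie in the closed, $f$-invariant set $H(p,f)$.

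First, I would use the flip-flop family setup to produce, for each marked time $a\in M(R_\infty)$, a transverse intersection point $z_a\in D_a\cap(\De^\mrs_p\cup\De^\mrs_q)$. By the choice of $\varrho$ made in the construction of $\fF$---namely, for $\varrho>0$ small enough every disc in $\cV^\fd_\varrho(p)$ transversally meets $\De^\mrs_p$, and analogously for $q$---any plaque $D_a\in\fF^-\subset\cV^\fd_\varrho(p)$ transversally meets $\De^\mrs_p$, while any $D_a\in\fF^+\subset\cV^\fd_\varrho(q)$ transversally meets $\De^\mrs_q$. I would let $z_a$ denote such an intersection point.

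Second, I would check that each such $z_a$ belongs to $H(p,f)$. Since $D_a\subset W^\mru(p,f)\cup W^\mru(q,f)$ by the definition of $\fF$, and $z_a\in\De^\mrs_p\cup\De^\mrs_q\subset W^\mrs(p,f)\cup W^\mrs(q,f)$, the point $z_a$ is a transverse intersection of a stable manifold and an unstable manifold of $p$ or $q$. In each of the four combinations, $z_a$ is either a transverse homoclinic point of $p$ (or of $q$) or a transverse heteroclinic point between $p$ and $q$; because $p$ and $q$ are homoclinically related, $H(p,f)=H(q,f)$ contains all such transverse intersections. Hence $z_a\in H(p,f)$, and by $f$-invariance so does every preimage $f^{-a}(z_a)$.

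Third, I would transport the $z_a$'s back to $x_\infty$ via the contraction of the flip-flop family. The point $f^{-a}(z_a)$ lies in $f^{-a}(D_a)$, the same nested sequence whose intersection is the singleton $\{x_\infty\}$ by~\eqref{e.justapoint}. Property~\eqref{i.flipflop33} of Definition~\ref{d.flipflop} together with $\zeta_i\to 0$ forces $\diam(f^{-a}(D_a))\to 0$ as $a\to\infty$ along $M(R_\infty)$. Hence $d\bigl(x_\infty,f^{-a}(z_a)\bigr)\le \diam\bigl(f^{-a}(D_a)\bigr)\to 0$, and since $H(p,f)$ is closed we conclude $x_\infty\in H(p,f)$. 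The one genuinely non-formal ingredient---and thus the most delicate step---is the first one: that $\varrho$ can be chosen small enough so that \emph{every} plaque in $\fF$ transversally meets $\De^\mrs_p$ or $\De^\mrs_q$. This is already built into the construction of the flip-flop family preceding the claim; once it is in hand, the rest is just $f$-invariance and closedness of $H(p,f)$ combined with the standard contraction of flip-flop families.
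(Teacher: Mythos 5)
Your proof is correct and follows the same route as the paper's: each plaque $D_a$ lies in $W^\mru(p,f)\cup W^\mru(q,f)$ and transversely meets $\De^\mrs_p\cup\De^\mrs_q$, hence contains a point of $H(p,f)$; pulling back by $f^{-a}$, using invariance and closedness of the class and the shrinking diameters of the nested sets $f^{-a}(D_a)$, yields $x_\infty\in H(p,f)$. Your write-up merely makes explicit the standard facts the paper leaves implicit.
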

\begin{proof} Every disc $D_a$ belongs to $\fF$,  hence it is contained in 
$W^\mru (p,f)\cup W^\mru (q,f)$ and intersects  transversely $W^\mrs(p,f)\cup W^\mrs(q,f)$. 
Thus $D_a$ 
contains a point of $H(p,f)$. The $f$-invariance of $H(p,f)$ implies that the same holds for $f^{-a}(D_a)$ 
The  compactness of $H(p,f)$ implies that $x_\infty\in H(p,f)$. 
\end{proof}

By construction, the point $x_\infty$ is controlled at any scale  with a long sparse tail 
for $\varphi$ and $f$ (the ambient space here is
$H(p,f)$). The theorem now follows from
Theorem~\ref{t.accumulation}. 
\end{proof}

\subsection{Proof of Proposition~\ref{p.flipflophomoclinic}}
\label{ss.postponed}
We split the proof of the proposition into two parts:

\subsubsection{$\fF=\fF^+\cup \fF^-$ is a  flip-flop family}\label{ss.flipflop}
By construction, the map $\varphi$ is less than $-\alpha<0$ in the discs
of  $\cV^{\fd}_\varrho ( p)$ and bigger than $\alpha>0$ in the discs
of $\cV^{\fd}_\varrho(q)$. The definition of $\fF^\pm$ implies 
\eqref{i.flipflop1} in Definition~\ref{d.flipflop}.

Recall the choice of $\ell_0$ above and that,
by construction, the image $f^{\ell_0}(D)$ of  any disc $D\in \fF$ intersects transversely the compact parts $\De^\mrs_p$ of $W^\mrs(p,f)$ and 
$\De^\mrs_q$ of  $W^\mrs(q,f)$.
 Thus the $\lambda$-lemma (inclination lemma) and the invariance of $W^\mru(p,f)\cup W^\mru(q,f)$ imply 
the existence of $n_0>0$ such that for every $n>n_0$ and every disc $D\in\fF$
the set $f^n(D)$ contains a disc $D^+\in\fF^+$ and a disc $D^-\in \fF^-$. 
This proves item \eqref{i.flipflop2} in Definition~\ref{d.flipflop}.

It remains to get the expansion property in item \eqref{i.flipflop33} of Definition~\ref{d.flipflop}. We need to get $n$ such that
for every $D\in \fF$ the disc $f^n(D)$ contains a disc $D'$ such 
that $f^n\colon f^{-n}(D')\to D'$ is a uniform expansion. For that it is enough to take sufficiently large $n$ (independent of $D$). 
To see why this is so recall first that $f^{n_0}(D)$ contains a disc $D_{n_0}\in \fF^+$. 
Now Remark~\ref{r.contraction} 
 provides a sequence of discs $D_{n_0+i\ell_0}$ in $\fF^\pm$ such 
 that $D_{n_0+(i+1)\ell_0}\subset f^{\ell_0}(D_{n_0+i})$ and 
 $f^{\ell_0}\colon f^{-\ell_0}(D_{n_0+(i+1)\ell_0})\to D_{n_0+(i+1)\ell_0}$  is a uniform expansion. 
 This implies that for $i$ large enough 
(independent of $D$) we get the 
announced expansion  for $f^{n_0+i\ell_0}\colon f^{-(n_0+i\ell_0)}(D_{n_0+i\ell_0})\to D_{n_0+i\ell_0}$,
just note that the $i\,\ell_0$ additional iterates in the ``expanding part" compensate any contraction introduced by
the first $n_0$ iterates.

\subsubsection{The family $\fF$ sojourns along the homoclinic class $H(p,f)$}\label{ss.sojournsalong}
Consider any $\delta>0$. We need prove that there is $N>0$ such  that 
every disc $D\in\fF$ contains a pair of discs 
$\widehat D^+$, $\widehat D^-$ such  that
for every $x\in \widehat D^\pm$ the segment of orbit $\{x,\dots, f^N(x)\}$ is $\delta$-dense
in $H(p,f)$ (item \eqref{i.defff0}),
$f^N(\widehat D^+)\in\fF^+$
and $f^N(\widehat D^-)\in\fF^-$ (item \eqref{i.defff1}), and
$f^N\colon \widehat D^\pm\to f^N(\widehat D^\pm)$ is expanding
(item \eqref{i.defff2}).

We need the following property of  $H(p,f)$ 
that is a direct consequence of the density
of transverse homoclinic intersection points of $W^\mru(p,f)\cap W^\mrs(p,f)$ in $H(p,f)$
and the existence of (hyperbolic) horseshoes associated to these points. 

\begin{rema}\label{r.existenceofperiodicorbits}
For every $\epsilon>0$ there is a hyperbolic periodic point $r_\epsilon\in H(p,f)$
 that is homoclinically related to $p$ and $q$ whose orbit  is $\epsilon/2$-dense in $H(p,f)=
 H(q,f)$.
\end{rema}

To prove item \eqref{i.defff0} consider the point $r=r_{\frac{\delta}{2}}\in H(p,f)$ given by 
Remark~\ref{r.existenceofperiodicorbits}.
As the points $r, p$, and $q$ are pairwise homoclinically related,  the stable manifold of
the orbit of $r$,  $W^\mrs(\cO(r),f)$ 
 accumulates the ones of $p$ and $q$.
 Hence there are compact discs $\De^\mrs_{r,p}, \De^\mrs_{r,q}\subset W^\mrs(\cO(r),f)$ such 
 that any disc in 
 $\cV^{\fd}_\varrho(p)\cup \cV^{\fd}_\varrho (q)$ 
 meets transversely $\De^\mrs_{r,p}\cup \De^\mrs_{r,q}$.

Let $\pi$ be the period of $r$. As in Remark~\ref{r.contraction}, for each $i=0,\dots, \pi-1$,
we fix a  small local unstable manifold $W^\mru_{loc}(f^i(r),f)$ and a small $C^1$-neighbourhood 
$\cV_\eta^\fd (f^i(r))\eqdef  \cV_\eta^\fd (W^\mru_{loc}(f^i(r),f))$ 
 such that the image $f(D)$ of
 any disc $D\in \cV^\fd_\eta(f^i(r))$ contains a disc in 
$\cV_\eta^\fd (f^{i+1}(r))$ (for $\pi-1$ we take ``$\pi=0$").   

Take now $D=D_0$ any disc in $\cV^\fd_\eta(r)$, let $D_1$ be a  sub-disc of
$f(D_0)$ in $\cV^\fd_\eta (f(r))$, and inductively define $D_{i+1}$ as a disc
in $\cV^\fd_\eta (f^{i+1}(r))$ contained in $f(D_i)$. 
 Assuming thta the local unstable manifolds and their neighbourhoods are small enough we have that
 every point in a  disc of $\cV_\eta^\fd (f^i(r))$, $i=0,\dots, \pi-1$, is at
 distance less that $\frac{\delta}2$ from the orbit of $r$. 
 Since the orbit of $r$ is $\frac{\delta}2$-dense in $H(p,f)$ for every $x\in f^{-\pi }(D_\pi)\subset D$,
 we have that the segment of  orbit $\{ x,\dots , f^{\pi}(x)\}$ 
 is $\delta$-dense in $H(p,f)$. 

%
%

Consider now a disc $D\in\fF$. By construction,  this disc intersects transversely $W^\mrs(\cO(r),f)$ in some point of $\De^\mrs_{r,p}\cup \De^\mrs_{r,q}$. By 
the $\lambda$-lemma there  is $j_0$ (independent of $D$) such 
that $f^{j_0}(D)$ contains a  disc $D_{0}$ in $\cV_\eta^\fd(r)$. 
The argument above  provides a 
sequence of discs $D_{i}\in \cV_\eta^\fd (f^{i}(r))$, $j\in\{0,\dots,\pi-1\}$, 
with $D_{i+1}\subset f(D_{i})$ and such that 
for every $x\in f^{-\pi}(D_{\pi})\subset D_0\subset f^{j_0}(D)$ 
its  orbit segment $\{x,\dots, f^{\pi}(x)\}$ 
is $\delta$-dense in $H(p,f)$.
A new application of  the $\lambda$-lemma provides a uniform $j_1>0$ such that 
$f^{j_1}(D_\pi)$ contains  discs $\widetilde D^\pm \in \fF^\pm$ (recall that the initial $D\in \fF$ and therefore it
is contained
in $W^\mru(p,f) \cup W^\mru(q,f)$).

Now it is enough to take 
$$
N\eqdef j_0+\pi+j_1
\quad
\mbox{and}
\quad
\widehat D^\pm\eqdef f^{-j_0-\pi-j_1}(\widetilde D^\pm) \subset D.
$$
By construction the  orbit segment  
$\{y,\dots, f^{N}(y)\}$ of any point 
$y\in \widehat D^\pm$
is $\delta$-dense in $H(p,f)$,  proving item
\eqref{i.defff0} in Definition~\ref{d.flipfloptail}. By construction, $f^N(\widehat D^\pm)=\widetilde D^\pm \in \fF^\pm$ proving  item 
\eqref{i.defff1} in Definition~\ref{d.flipfloptail}

Note that the discs $\widehat D^\pm\subset D$ satisfy
the density in $H(p,f)$  and return to $\fF^\pm$  properties, 
however they can fail to satisfy the expansion property in \eqref{i.defff2} 
in Definition~\ref{d.flipfloptail}.
To get  additionally such an expansion 
one considers further iterates of the disc in a ``expanding" region nearby $p$ or $q$. The expansion is obtained
 using Remark~\ref{r.contraction} 
and arguing exactly as in Section~\ref{ss.flipflop}.
The proof of Proposition~\ref{p.flipflophomoclinic} is now complete. \qed

\subsection{Proof of Corollary~\ref{c.function}}
\label{ss.proofofcorollarycfunction}
By hypothesis, the saddles  $p_f$ and $q_f$  have  different $\mru$-indices (say $i$ and $j$, $i<j$) that depend continuously on $f$ and whose chain recurrence classes coincide for every  diffeomorphism $f$ in a $C^1$-open
set $\cU$. 
As in the proof of Theorem~\ref{t.homoclinic}, let us assume that $t=0$ and hence 
the Birkhoff average of 
 $\varphi$ is negative  in $\cO(p_f)$  
 and positive  in $\cO(q_f)$.  

According to \cite{ABCDW}, 
up to restrict to a $C^1$-open and dense subset of $\cU$, 
we can assume that for every $k\in [i,j]$
every diffeomorphism $f\in \cU$ has a periodic point $r_f$ of $\mru$-index $k$ 
that is $C^1$-robustly in $C(p_f,f)$.
Therefore, after replacing  $p_f$ and $q_f$ by other periodic points, we can assume that 
the $\mru$-indices of $p_f$ and $q_f$ are consecutive.

Following Propositions 3.7 and 3.10 in \cite{ABCDW}, 
an arbitrarily small  $C^1$-per\-tur\-ba\-tion of $f$ gives a diffeomorphism $h$ with a periodic point $r_h$ 
having a (unique) center eigenvalue equal to $1$ that is robustly
in $C(p_h,h)$. This means that this (non-hyperbolic) periodic point $r_h$ admits a continuation $r_g\in C(p_g,g)=C(q_g,g)$ for some $g$ arbitrarily close to $f$. 

Consider the average of $\varphi$ along the orbit of 
$r_h$ and assume first that  it is different from zero, for example negative. 
Then, after an arbitrarily small perturbation, we can transform $r_h$ in a hyperbolic point $r_g$ of $g$ of the same 
index as $q_g$ and homoclinically related to $q_g$ (for this last step we  use the version of Hayashi's connecting lemma~\cite{Ha}
for chain recurrence classes in \cite{BC}\footnote{This result guarantees that 
given two saddles in
  the same chain recurrence there is an arbitrarily small  $C^1$-perturbation of the diffeomorphism 
 that gives an intersection between 
the invariant manifolds of these saddles. If the saddles belong $C^1$-robustly to the same class then one can repeat the 
previous argument, interchanging the roles of the saddles, to get that the invariant manifolds of these saddles meet cyclically. Finally, if the saddles have the same index one can turn these intersections into transverse ones, thus the two saddles are homoclinically related and hence they are $C^1$-robustly  in the same homoclinic class.}).  
The diffeomorphism $g$ belongs to $\cU$, the saddles $r_g$ and $q_g$ are homoclinically related, and the averages of $\varphi$ in these orbits have different signals. The corollary now follows from
Theorem~\ref{t.homoclinic}.

In the case when the average of $\varphi$ throughout the orbit of $r_g$ is zero one needs a slight modification of the previous argument. 
 Let us sketch this construction, arguing as above,  we
can assume that,  after an arbitrarily small perturbation, the point $r_g$ is hyperbolic of the same index as $p_g$
(with center derivative arbitrarily close to one) and that
 $r_g$ and $p_g$ are homoclinically related.
Using the homoclinic relation between $r_g$ and $p_g$ we get a point $\bar r_g$ with some center eigenvalue arbitrarily close to one and with negative average for $\varphi$. Next, arguing as above and after a small perturbation, we change the index of the point
$\bar r_g$ and  
generate transverse cyclic intersections between
$\bar r_g$ and $q_g$ (i.e., we put the saddle $\bar r_g$  in the homoclinic class of $q_g$). We are now in the previous case and  prove the corollary using the saddles $\bar r_g$ and $q_g$.
\qed

\section{Flip-flop families in partially hyperbolic dynamics}
\label{s.flipflopph}
In  this section we prove Theorems~\ref{t.cycle} and \ref{t.ctail}.
For that we borrow and adapt some constructions in \cite{BoBoDi2}.
In Section~\ref{ss.dynamicalblenders} we recall the definition of a dynamical blender and its main properties. Section~\ref{ss.ffconf} is dedicated to the study of flip-flop configurations.
In Section~\ref{ss.flipflopfamilieswithtails} we see how  flip-flop configurations yield 
flip-flop families with sojourns.
In Section~\ref{ss.controlofaveragesflipflop} we analyse the 
control of averages in flip-flop configurations.
Finally, in Section~\ref{ss.proofoftheoremtcycleandother} 
we conclude the proofs of Theorems~\ref{t.cycle} and \ref{t.ctail}.

%


%


%

\subsection{Dynamical blenders}\label{ss.dynamicalblenders}
The definition
of a \emph{dynamical blender}  in \cite{BoBoDi2} 
involves three main ingredients: 
the  distance on the space of $C^1$-discs  (Section~\ref{sss.spaceofdiscs}),
strictly invariant families of  discs (Section~\ref{ss.invariantfamilies}), and
invariant cone fields (Section~\ref{ss.invariantconefields}). We now describe succinctly these ingredients.

%
%

%

\subsubsection{Strictly invariant families of  discs}
\label{ss.invariantfamilies}
Recall the notation  $\cD^i(M)$ 
for  the set of 
$i$-dimensional (closed) discs $C^1$-embedded in $M$ and the definitions 
of the distance $\fd$ and 
the open neighbourhood  $\cV^\fd_\eta(\fD)$ 
of a family of discs $\fD$ with respect to $\fd$
in  Section~\ref{sss.spaceofdiscs}.

\begin{defi}[Strictly $f$-invariant families of discs]
\label{d.strict} 
Let $f\in \Diff^1(M)$. A family of discs $\fD\subset \cD^i(M)$  is \emph{strictly $f$-invariant} if there is $\varepsilon>0$ such that for every disc
$D_0\in\cV^\fd_\varepsilon(\fD)$ there is a disc $D_1\in \fD$ with 
$D_1\subset f(D_0)$. 
\end{defi}

The existence of a strictly invariant family of discs is a $C^1$-robust property:
If the family $\fD$ is strictly $f$-invariant  then there are $\mu, \eta>0$ such that
the family $\fD_\mu= \cV^\fd_\mu(\fD)$
is strictly $g$-invariant 
for every  $g\in \Diff^1(M)$ that is $\eta$-$C^1$-close to $f$,
 see \cite[Lemma 3.8]{BoBoDi2}.

\subsubsection{Invariant cone fields}
\label{ss.invariantconefields}
Given a vector space of finite dimension $E$, we say that
a subset $C$ of $E$  is a \emph{cone of index $i$} 
if there are a splitting   $E = E_1 \oplus E_2$ with $\dim (E_1) = i$
and a norm $\| \mathord{\cdot} \|$ defined on $E$ such that
$$
C = \{v=v_1 + v_2  \colon v_i \in E_i, \ \|v_2\| \le \|v_1\|\}.
$$
A cone $C'$ is \emph{strictly contained} in the cone $C$  above if there is  $\alpha>1$
such that 
$$
C' \subset C_\alpha=\{v_1 + v_2 \colon 
v_i \in E_i, \ \|v_2\| \le \alpha^{-1} \|v_1\|\} \subset C.
$$

A \emph{cone field of index $i$} defined on a subset $V$ of a compact 
manifold $M$ 
is a continuous map $x\mapsto \cC(x)\subset T_x M$  that associates to each point 
$x\in V$ a cone $\cC(x)$ of index $i$ . We denote this cone field by $\cC=\{\cC(x)\}_{x\in V}$.

Given a diffeomorphism $f\in \Diff^1(M)$ and a cone field 
$\cC=\{\cC(x)\}_{x\in V}$ we say that $\cC$  is \emph{strictly $Df$-invariant} 
if $Df(x)(\cC(x))$ is strictly contained in $\cC(f(x))$
for every $x \in V \cap f^{-1}(V)$.

The following result is a standard lemma about persistence of invariant cone fields
(see for instance \cite[Lemma 3.9]{BoBoDi2}).

\begin{lemm}\label{l.cones} 
Let  $f\in \Diff^1(M)$, $V$ a compact subset of $M$, and $\cC$ a strictly $Df$-invariant cone field 
defined on $V$. Then there is a $C^1$-neighbourhood
$\cU$ of $f$ such that $\cC$ is strictly $Dg$-invariant for every $g\in \cU$. 
\end{lemm}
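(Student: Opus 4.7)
The plan is to upgrade the pointwise strict invariance to a uniform modulus of strictness via compactness of $V$, and then transfer this uniform slack to every sufficiently small $C^1$-perturbation of $f$ using the continuity of the derivative.

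A preliminary step is to extend $\cC$: since $V$ is compact (hence closed), a Tietze-type argument in local trivializations of $TM$ combined with a partition of unity produces a continuous cone field $\widetilde{\cC}$ of index $i$ on an open neighbourhood $W$ of $V$ with $\widetilde{\cC}|_V=\cC$. This extension is necessary because, for $g$ close to $f$ and $x\in V\cap g^{-1}(V)$, one only knows $g(x)\in V$ whereas $f(x)$ may merely lie near $V$, so $\cC(f(x))$ need not be defined and a direct comparison with the unperturbed invariance hypothesis is obstructed. This is the main (though minor) obstacle in the argument.

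Next, for each $x\in V\cap f^{-1}(V)$ there exists $\alpha_x>1$ with $Df(x)(\cC(x))\subset\cC(f(x))_{\alpha_x}$. The relation ``a linear map $L\colon T_xM\to T_yM$ sends $\widetilde{\cC}(x)$ into $\widetilde{\cC}(y)_\alpha$'' is open in the triple $(x,y,L)$ by the continuity of $\widetilde{\cC}$, so compactness of $V\cap f^{-1}(V)$ delivers a single $\alpha>1$ with $Df(x)(\cC(x))\subset\cC(f(x))_\alpha$ for all $x\in V\cap f^{-1}(V)$. By the same openness, this quantitative strictness persists under small perturbations of the base points and of the linear map: there exist $\eta_0>0$ and $\alpha'\in(1,\alpha)$ such that for every $x\in V$, every $y\in W$ with $d(y,f(x))<\eta_0$, and every linear map $L\colon T_xM\to T_yM$ with $\|L-Df(x)\|<\eta_0$, one has $L(\widetilde{\cC}(x))\subset\widetilde{\cC}(y)_{\alpha'}$.

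To conclude, I would choose a $C^1$-neighbourhood $\cU$ of $f$ small enough that every $g\in\cU$ satisfies $\sup_{x\in V}d(g(x),f(x))<\eta_0$ and $\sup_{x\in V}\|Dg(x)-Df(x)\|<\eta_0$. For any $x\in V\cap g^{-1}(V)$, the point $y=g(x)$ belongs to $V\subset W$, satisfies $d(y,f(x))<\eta_0$, and $Dg(x)$ is $\eta_0$-close to $Df(x)$; the preceding paragraph then gives $Dg(x)(\cC(x))\subset\cC(g(x))_{\alpha'}$, which is precisely the strict $Dg$-invariance required.
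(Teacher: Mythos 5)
The paper itself offers no proof of this lemma (it is quoted as standard, with a pointer to \cite[Lemma 3.9]{BoBoDi2}), so the only question is whether your argument is correct. Your overall strategy — uniformize the strictness constant by compactness, then invoke openness — is the right one, and you correctly locate the delicate point: for $x\in V\cap g^{-1}(V)$ the image $f(x)$ need not lie in $V$. But your resolution of that point has a gap. Extending $\cC$ to a cone field $\widetilde{\cC}$ on a neighbourhood $W$ of $V$ only makes $\widetilde{\cC}(f(x))$ \emph{defined}; it gives no invariance information at such an $x$, because the hypothesis $Df(x)(\cC(x))\subset\cC(f(x))_\alpha$ is available only for $x\in V\cap f^{-1}(V)$. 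Consequently the intermediate statement you derive ``by the same openness'' — that for \emph{every} $x\in V$, every $y\in W$ with $d(y,f(x))<\eta_0$ and every $L$ with $\|L-Df(x)\|<\eta_0$ one has $L(\widetilde{\cC}(x))\subset\widetilde{\cC}(y)_{\alpha'}$ — is false as stated: take $x\in V$ with $f(x)\in W\setminus V$ at definite distance from $V$, put $y=f(x)$ and $L=Df(x)$; nothing constrains $Df(x)(\cC(x))$ relative to the arbitrary extension $\widetilde{\cC}(f(x))$. The openness/compactness argument only yields the inclusion for triples $(x,y,L)$ near the compact set $\{(x,f(x),Df(x)):x\in V\cap f^{-1}(V)\}$, and a triple with $x\notin f^{-1}(V)$ need not be near that set merely because $y$ is close to $f(x)$ and $L$ is close to $Df(x)$.

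The missing ingredient is the observation that the points you actually need are forced into that neighbourhood. The sets $A_\delta=\{x\in V:\ d(f(x),V)\le\delta\}$ are compact, decrease with $\delta$, and satisfy $\bigcap_{\delta>0}A_\delta=V\cap f^{-1}(V)$; hence $A_\delta$ is contained in any prescribed neighbourhood of $V\cap f^{-1}(V)$ once $\delta$ is small. If $g$ is $\delta$-close to $f$ in the $C^0$ distance and $x\in V\cap g^{-1}(V)$, then $d(f(x),V)\le d(f(x),g(x))<\delta$, so $x\in A_\delta$ and the triple $(x,g(x),Dg(x))$ \emph{is} close to $(x',f(x'),Df(x'))$ for some $x'\in V\cap f^{-1}(V)$, at which point your openness argument applies. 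With this in hand the extension $\widetilde{\cC}$ is in fact unnecessary, since both $x$ and $y=g(x)$ lie in $V$ and one only ever compares cones at points of $V$. The gap is repairable, but as written the key quantified claim is unjustified and does not hold in general.
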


\subsubsection{Dynamical blenders}
\label{ss.manyblenders}

We are now ready  to define a {\emph{dynamical blender}} and recall its main properties.

\begin{defi}[Dynamical blender, \cite{BoBoDi2}] 
\label{d.dynamicalblender}
Let $f\in \Diff^1(M)$. A compact $f$-invariant set $\Ga \subset M$  
is a \emph{dynamical  $\mathrm{cu}$-blender}
\emph{of $\mathrm{uu}$-index $i$} if the following properties hold:
\begin{enumerate}
 \item  
 there is an open neighbourhood $V$ of $\Gamma$ such that 
 $$
 \Gamma=\bigcap_{n\in\ZZ} f^n(\overline V);
 $$
  \item 
 the set $\Gamma$ is transitive;
 \item 
 the set $\Gamma$ is  (uniformly) hyperbolic with $\mathrm{u}$-index strictly larger than $i$;
 \item 
 there is a strictly $Df$-invariant cone field $\cC^{\mathrm{uu}}$ of index $i$ defined on $\overline V$;
 and
 \item 
 there are a strictly $f$-invariant family of discs  $\fD\subset \cD^i(M)$ and  $\varepsilon>0$ such  
 that every disc in
 $\cV^\fd_\varepsilon(\fD)$ is contained in $V$ and tangent to $\cC^{\mathrm{uu}}$.
\end{enumerate}
We say that $V$ is the \emph{domain of the blender}, $\cC^{\mathrm{uu}}$ is its \emph{strong unstable cone field}, 
and $\fD$ is its 
\emph{strictly invariant family of discs}.
To emphasise the role of these objects  we write
$(\Gamma, V,\cC^{\mathrm{uu}}, \fD)$. 
\end{defi}

\begin{rema}\label{r.blendersph}
Let $\Gamma$ be a 
hyperbolic set of  $\mathrm{u}$-index $j$ that is also
a $\mathrm{cu}$-blender of $\mathrm{uu}$-index $i$. 
By definition, the set 
$\Gamma$ has a partially hyperbolic splitting  (recall \eqref{e.ph}) of the form 
$$
T_\Gamma M= E^\mathrm{uu} \oplus E^\mathrm{cu} \oplus E^\mathrm{s},
$$
where $\dim(E^\mathrm{uu})=i$,
$\dim (E^\mathrm{cu})=j-i\ge 1$,
and $E^{\mathrm{u}}=E^\mathrm{uu}\oplus E^\mathrm{cu}$. Here
$E^\mathrm{s}$  and $E^\mathrm{u}$ are  the stable and unstable bundles
of $\Gamma$.  We also define the bundle
$E^{\mathrm{cs}}\eqdef E^\mathrm{cu}\oplus E^\mathrm{ss}$.
\end{rema}

Next lemma claims that blenders have well defined continuations.

\begin{lemm}
[Lemma 3.8 and Scholium 3.14 in \cite{BoBoDi2}]
\label{l.robust} 
Let $(\Gamma,V,\cC^{\mathrm{uu}},\fD)$ be a dynamical 
blender of $f\in \Diff^1(M)$. 
Then there are a $C^1$-neighbourhood $\cU$ of $f$ and $\varepsilon>0$  such  that 
for every diffeomorphism $g\in\cU$ the $4$-tuple $(\Gamma_g,V,\cC^{\mathrm{uu}},\cV^\fd_\epsilon(\fD))$ is a dynamical blender, where $\Gamma_g$ is the hyperbolic continuation 
of $\Gamma$ for $g$. 

Moreover, 
every disc 
$D\in \cV^\fd_\varepsilon(\fD)$  meets the local stable manifold of $\Gamma_g$
defined by 
$$
W^\mrs_{loc}(\Ga_g)\eqdef \{x\in V\colon f^i(x)\in V\, \mbox{for every $i\ge 0$}\}.
$$
\end{lemm}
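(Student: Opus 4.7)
The plan is to verify the five clauses of Definition \ref{d.dynamicalblender} for the $4$-tuple $(\Gamma_g,V,\cC^{\mathrm{uu}},\cV^\fd_\varepsilon(\fD))$, and then to deduce the \emph{moreover} statement from strict $g$-invariance of the disc family together with a nested intersection argument. Since $\Gamma=\bigcap_{n\in\ZZ} f^n(\overline V)$ is hyperbolic, standard structural stability of hyperbolic sets furnishes a $C^1$-neighbourhood $\cU_0$ of $f$ and a continuous assignment $g\mapsto \Gamma_g$ with $\Gamma_g=\bigcap_{n\in\ZZ} g^n(\overline V)$ hyperbolic of the same $\mathrm{u}$-index as $\Gamma$ and topologically conjugate to $\Gamma$ via a homeomorphism close to the identity, so transitivity transfers from $\Gamma$ to $\Gamma_g$; this disposes of clauses (1)-(3). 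For clause (4), Lemma \ref{l.cones} provides a possibly smaller $C^1$-neighbourhood on which $\cC^{\mathrm{uu}}$ is strictly $Dg$-invariant on $\overline V$.

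The core of the argument is clause (5). I would invoke the $C^1$-robustness of strictly invariant disc families recalled just after Definition \ref{d.strict}: there exist $\mu,\eta>0$ such that $\fD_\mu \eqdef \cV^\fd_\mu(\fD)$ is strictly $g$-invariant for every $g$ that is $\eta$-$C^1$-close to $f$. Setting $\varepsilon=\mu$ and intersecting the three $C^1$-neighbourhoods obtained above, I get a $C^1$-neighbourhood $\cU$ of $f$ and, by Definition \ref{d.strict} applied to $\fD_\mu=\cV^\fd_\varepsilon(\fD)$, some $\varepsilon'>0$ such that every disc in $\cV^\fd_{\varepsilon'}(\cV^\fd_\varepsilon(\fD))$ admits a sub-disc of its $g$-image lying in $\cV^\fd_\varepsilon(\fD)$. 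After possibly shrinking $\varepsilon$ and $\varepsilon'$, continuity of the position and of the tangent plane in the distance $\fd$ guarantees that every such disc remains contained in $V$ and tangent to $\cC^{\mathrm{uu}}$, completing the verification that $(\Gamma_g,V,\cC^{\mathrm{uu}},\cV^\fd_\varepsilon(\fD))$ is a dynamical blender.

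For the moreover assertion, fix $D_0 \in \cV^\fd_\varepsilon(\fD)$. Iterating strict $g$-invariance, I select inductively a sequence of discs $D_n \in \cV^\fd_\varepsilon(\fD)$, $n\ge 1$, with $D_n \subset g(D_{n-1})$. Then the nested sequence of non-empty compact sets
\[
 g^{-n}(D_n) \subset g^{-(n-1)}(D_{n-1}) \subset \cdots \subset D_0
\]
has an intersection containing some point $x$, and by construction $g^n(x) \in D_n \subset V$ for every $n\ge 0$, which is precisely the defining condition for $x \in W^\mrs_{loc}(\Ga_g)$ given in the statement.

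The main obstacle is clause (5): strict invariance has to be upgraded from $f$ to all $g$ close to $f$ while the family itself is enlarged from $\fD$ to $\cV^\fd_\varepsilon(\fD)$. This relies on the two ingredients already recalled in the excerpt, namely the openness of strict invariance under perturbations of $f$ (in the remark after Definition \ref{d.strict}) and the continuity of the map $D \mapsto g(D)$ with respect to the distance $\fd$ (\cite[Proposition 3.1]{BoBoDi2} cited in Section \ref{sss.spaceofdiscs}), which together allow a controlled loss of the invariance parameter when replacing $f$ by $g$ and simultaneously enlarging $\fD$.
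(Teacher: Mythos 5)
Your proposal is correct. The paper itself gives no proof of this lemma --- it is imported wholesale from \cite{BoBoDi2} (Lemma 3.8 and Scholium 3.14) --- and your argument assembles exactly the ingredients the paper recalls for that purpose: structural stability for clauses (1)--(3), Lemma~\ref{l.cones} for the cone field, the $C^1$-robustness of strictly invariant disc families for clause (5), and the standard nested-intersection argument $\bigcap_n g^{-n}(D_n)\neq\emptyset$ for the ``moreover'' part.
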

%
%


\subsection{Flip-flop configurations and partial hyperbolicity}
\label{ss.ffconf}
We now recall the definition of a flip-flop configuration and borrow some results from  \cite{BoBoDi2}.

\begin{defi}[Flip-flop configuration]
\label{d.flip} 
 Consider  $f\in \Diff^1(M)$ having
a dynamical $\mathrm{cu}$-blen\-der 
$(\Ga, V, \cC^\mathrm{uu}, \fD)$
of $\mathrm{uu}$-index $i$  
%
%
 and a hyperbolic periodic point $q$ of $\mathrm{u}$-index $i$.
We say that $(\Ga, V, \cC^\mathrm{uu}, \fD)$   and $q$ form a \emph{flip-flop configuration} if
there are:
\begin{itemize}
\item a disc $\Delta^\mathrm{u}$  contained in the unstable manifold $W^\mathrm{u}(q,f)$ and
\item a compact submanifold with boundary $\De^\mathrm{s} \subset V \cap W^\mathrm{s}(q,f)$
\end{itemize}
such that:
\begin{enumerate}
\item\label{i.FFC0} 
The disc $\Delta^\mathrm{u}$ belongs to the interior of the family $\fD$.
\item\label{i.FFC1}
$f^{-n}(\Delta^\mathrm{u}) \cap \overline V = \emptyset$ for all $n>0$.
\item\label{i.FFC2}
There is $N>0$ such that $f^{n}(\Delta^\mathrm{s}) \cap \overline V = \emptyset$ for every 
$n>N$. Moreover, if 
$x\in \De^\mathrm{s}$ and $j>0$ are such  that 
$f^j(x)\notin V$ then 
$f^i(x)\notin \overline V$ for every $i\ge j$.
\item\label{i.FFC3} 
$T_y W^\mathrm{s}(q,f) \cap \cC^\mathrm{uu}(y) = \{0\}$ for every $y \in \Delta^\mathrm{s}$.
\item\label{i.FFC4} 
There are a compact set $K$ in the relative interior of $\Delta^\mathrm{s}$ and  $\epsilon>0$ such that 
for every $D\in\fD$ there exists $x$ such that $K\cap D=\{x\}$ and $d(x,\partial D)>\epsilon$.
\end{enumerate}

The sets $\De^{\mathrm{u}}$ and $\De^{\mathrm{s}}$ are called the \emph{ unstable and stable connecting sets} 
of the flip-flop configuration, respectively. 
\end{defi}

 \cite[Proposition 4.2]{BoBoDi2} asserts that flip-flop configuration are $C^1$-robust. 
Next lemma claims that flip-flop configurations yield partially hyperbolic dynamics.
Recall Remark~\ref{r.blendersph} and the definition of the center unstable bundle $E^{\mathrm{cu}}$
of  a blender. 

\begin{lemm}[Lemma~4.6 in \cite{BoBoDi2}]\label{l.Laranjeiras}  
Consider $f\in \Diff^1(M)$ having a hyperbolic  periodic point $q$  
and a dynamical blender  
$(\Ga,V,\cC^\mathrm{uu},\fD)$ in a flip-flop configuration with connecting sets 
$\De^\mathrm{u}\subset W^\mathrm{u}(q,f)$  and 
$\De^\mathrm{s}\subset W^\mathrm{s}(q,f)$.
Consider the closed set
$$
 \Delta\eqdef
\mathcal{O}(q) \, \cup \,
\overline{V}
\, \cup \,  
\bigcup_{k\ge 0} f^k(\Delta^\mathrm{s}) 
\, \cup \,
\bigcup_{k\le 0} f^k(\Delta^\mathrm{u}).
$$
Then  there is a compact neighbourhood $U$ of  $\Delta$,
called a \emph{partially hyperbolic neighbourhood of the flip-flop configuration},
such that
the maximal invariant set $\Gamma(U)$ of $f$ in $U$
$$
\Gamma(U)\eqdef \bigcap_{i\in \ZZ} f^i(U)
$$
has a partially hyperbolic 
splitting
$$
T_{\Gamma(U)} M =
\widetilde E^\mathrm{uu} \oplus \widetilde E^\mathrm{cs},
$$
where  $\widetilde E^\mathrm{uu}$  is 
uniformly expanding and  $\widetilde E^\mathrm{uu}$  and  $\widetilde E^\mathrm{cu}$ 
 extend the bundles $E^\mathrm{uu}$ and
$E^\mathrm{cs}$, respectively,   defined over~$\Gamma$.

Moreover, there is a strictly $Df$-invariant cone field 
over $U$ that extends the cone field $\cC^\mathrm{uu}$  defined on $\overline V$
(we also denote this cone field  by $\cC^\mathrm{uu}$) 
 whose vectors are uniformly expanded by $Df$.
\end{lemm}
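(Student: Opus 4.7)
The strategy is to construct a compact neighbourhood $U$ of $\Delta$ equipped with a strictly $Df$-invariant cone field $\widetilde{\cC}^{\mathrm{uu}}$ of index $i$ whose vectors are uniformly expanded; the partially hyperbolic splitting will then follow by a standard cone-field argument. Indeed, once such $\widetilde{\cC}^{\mathrm{uu}}$ is in place on $U$, for every $x \in \Gamma(U)$ the fibre
$$
\widetilde E^{\mathrm{uu}}(x) = \bigcap_{n\ge 0} Df^n\bigl(\widetilde{\cC}^{\mathrm{uu}}(f^{-n}(x))\bigr)
$$
is an $i$-dimensional subspace depending continuously on $x$ and uniformly expanded by $Df$, and a complementary bundle $\widetilde E^{\mathrm{cs}}$ dominated by $\widetilde E^{\mathrm{uu}}$ is obtained dually. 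Since $\widetilde{\cC}^{\mathrm{uu}} = \cC^{\mathrm{uu}}$ over $\overline V \supset \Gamma$, uniqueness of invariant distributions inside a strict cone yields $\widetilde E^{\mathrm{uu}}|_\Gamma = E^{\mathrm{uu}}$ and $\widetilde E^{\mathrm{cs}}|_\Gamma = E^{\mathrm{cs}}$.

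First I would define $\widetilde{\cC}^{\mathrm{uu}}$ on three pieces covering $\Delta$. On $\overline V$, set $\widetilde{\cC}^{\mathrm{uu}} := \cC^{\mathrm{uu}}$, where strict invariance and uniform expansion are built into the blender data. On a small tubular neighbourhood $W_q$ of $\cO(q)$, use that $q$ has $\mathrm{u}$-index $i$ (equal to the $\mathrm{uu}$-index of the blender) and place narrow cones of index $i$ around $E^{\mathrm{u}}(q)$; by hyperbolicity of $q$ and continuity of $Df$, these cones are strictly $Df$-invariant and uniformly expanded if $W_q$ is small enough. The delicate piece consists of the ``bridges'': thanks to \eqref{i.FFC1}, \eqref{i.FFC2}, and the inclusions $\Delta^{\mathrm{u}} \subset W^{\mathrm{u}}(q,f)$, $\Delta^{\mathrm{s}} \subset W^{\mathrm{s}}(q,f)$, only finitely many iterates of each connecting disc lie outside $\overline V \cup W_q$. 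On these finitely many iterates I would define cones by iteratively pulling $\cC^{\mathrm{uu}}$ back along $\{f^{-n}(\Delta^{\mathrm{u}})\}$ and pushing $\cC^{\mathrm{uu}}$ forward along $\{f^n(\Delta^{\mathrm{s}})\}$, shrinking slightly at each step so that the resulting cones are strictly nested.

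The compatibility at the far end of each bridge is provided by exactly conditions \eqref{i.FFC0}--\eqref{i.FFC3}. For $\Delta^{\mathrm{u}}$, condition \eqref{i.FFC0} gives $T\Delta^{\mathrm{u}}$ strictly inside $\cC^{\mathrm{uu}}$, and uniform hyperbolicity along $W^{\mathrm{u}}(q,f)$ forces $Df^{-n}(T\Delta^{\mathrm{u}}) \to E^{\mathrm{u}}(q)$, so the pulled-back cones eventually fit inside the narrow cones chosen in $W_q$. For $\Delta^{\mathrm{s}}$, the transversality condition \eqref{i.FFC3} of $T_yW^{\mathrm{s}}(q,f)$ to $\cC^{\mathrm{uu}}$ makes the pushforwards $Df^n(\cC^{\mathrm{uu}})$ collapse toward $E^{\mathrm{u}}(q)$ under iteration, again matching the cones in $W_q$. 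Uniform expansion inside $\widetilde{\cC}^{\mathrm{uu}}$ is automatic on $\overline V$ and $W_q$, and on the finite bridges follows by compactness combined with genuine expansion at the endpoints. Finally, extend $\widetilde{\cC}^{\mathrm{uu}}$ continuously to a compact neighbourhood $U$ of the union of the pieces just constructed; openness of strict cone invariance and of uniform expansion (as in Lemma~\ref{l.cones}) guarantees that both properties persist on $U$ provided it is chosen small enough.

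The main obstacle will be the construction and matching of the cone field on the bridges: one must simultaneously maintain strict $Df$-invariance, uniform expansion, and compatibility with both the blender cones on $\overline V$ and the hyperbolic cones near $\cO(q)$, across a finite segment where neither the blender's expansion nor the hyperbolicity at $q$ is directly available. This is precisely the geometric content that conditions \eqref{i.FFC0}--\eqref{i.FFC3} of a flip-flop configuration are designed to make possible, while \eqref{i.FFC4} keeps the bridge discs uniformly away from boundaries so that the pullback/pushforward cones remain well-defined on open sets around the bridge orbits.
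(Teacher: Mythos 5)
This lemma is imported verbatim from \cite[Lemma 4.6]{BoBoDi2} and the present paper gives no proof of it, so your argument can only be compared with the cited one: your construction (the blender's cone field on $\overline V$, narrow hyperbolic cones of index $i$ near $\cO(q)$, finitely many interpolating cones along the two connecting ``bridges'' using conditions \eqref{i.FFC0}--\eqref{i.FFC4}, then the standard cone-field criterion for the dominated splitting and the openness of strict invariance to pass to a compact neighbourhood $U$) is exactly the strategy of that proof and is essentially correct. The one slip is a reversed inclusion at the $\Delta^\mathrm{u}$-end: under backward iteration near $q$ the pullbacks $Df^{-n}(\cC^\mathrm{uu})$ \emph{widen} (they come to contain large cones transverse to $E^\mathrm{u}$), so they do not ``fit inside the narrow cones''; what the gluing requires is the opposite containment, namely that the narrow cones chosen in $W_q$ are mapped by the finite forward bridge iteration strictly inside $\cC^\mathrm{uu}$ over $\Delta^\mathrm{u}$ --- equivalently that they sit inside those pullbacks --- which is what \eqref{i.FFC0} together with $Df^{-n}(T\Delta^\mathrm{u})\to E^\mathrm{u}(q)$ delivers.
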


\subsection{Flip-flop families  with sojourns in homoclinic classes}
\label{ss.flipflopfamilieswithtails}
\cite[Proposition 4.9]{BoBoDi2} claims that  flip-flop configurations yield 
flip-flop families. These configurations are enough to construct measures with controlled averages. 
However they 
do not provide control of the support of the obtained measure. 
In this paper, we want to get measures with ``full support". Bearing this in mind we defined flip-flop families
with sojourns (Definition~\ref{d.flipfloptail}). These ``sojourns" guarantee ``density" of orbits in the ambient space.

%
%

\begin{theor}\label{t.p.flipfloptail}
Consider $f\in \Diff^1(M)$ with a hyperbolic periodic point $q$ and a 
dynamical blender $\Ga$ in a flip-flop configuration.
Let 
$\varphi\colon M\to\RR$ be a continuous function that is positive on the blender
$\Ga$  and negative on the orbit of $q$. 

Then there are $N\geq1$ and a flip-flop family $\mathfrak{F}$ 
 with respect to $\varphi_N$ and 
 $f^N$  which  sojourns along the homoclinic class 
 $H(q,f)$ (for $f$).
  
  Moreover, given any $\delta>0$ the flip-flop family $\fF$ can be chosen such that:
 \begin{itemize}
 \item
 every $D\in \fF$  is
 contained in a $\delta$-neighbourhood of $\Ga\cup \{\cO(q)\}$, 
 \item
every  $D\in \fF$  transversely intersects 
 $W^\mrs(q,f^N)$, and
 \item
there is  $D\in \fF$ contained in $W^\mru_{loc}(q,f^N)$. 
\end{itemize}
\end{theor}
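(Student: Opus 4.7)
The plan is to build on the flip-flop family construction of \cite[Proposition 4.9]{BoBoDi2}, applied to a suitable iterate $f^{N_0}$ and to $\varphi_{N_0}$. This step already yields an initial flip-flop family $\fF_0 = \fF_0^+ \sqcup \fF_0^-$ associated to $\varphi_{N_0}$ and $f^{N_0}$: the plaques of $\fF_0^+$ are discs in a small $\fd$-neighbourhood of the strictly invariant family $\fD$ of $\Gamma$ (tangent to $\cC^{\mathrm{uu}}$, close to $\Gamma$, where $\varphi_{N_0} > \alpha$) and the plaques of $\fF_0^-$ are discs close to $W^{\mathrm{u}}_{loc}(q,f^{N_0})$ (where $\varphi_{N_0} < -\alpha$). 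Taking the $\fd$-neighbourhoods sufficiently small, every plaque lies in the $\delta$-neighbourhood of $\Gamma \cup \cO(q)$, and $W^{\mathrm{u}}_{loc}(q,f^{N_0})$ may be directly included as a plaque of $\fF_0^-$. The transverse intersection of every plaque with $W^{\mathrm{s}}(q,f^{N_0})$ follows from tangency of plaques to $\cC^{\mathrm{uu}}$ combined with item~\eqref{i.FFC3} of Definition~\ref{d.flip} (for $\fF_0^-$, directly; for $\fF_0^+$, by routing iterates through a neighbourhood of $\Delta^{\mathrm{s}}$ via the connecting sets of the flip-flop configuration).

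To install the sojourns along $H(q,f)$, I would adapt the strategy of Section~\ref{ss.sojournsalong}, replacing the pair of homoclinically related saddles $(p,q)$ by the present pair $(\Gamma,q)$. Given $\delta > 0$, choose a hyperbolic periodic point $r \in H(q,f)$ that is homoclinically related to $q$ and whose $f$-orbit is $\delta/2$-dense in $H(q,f)$; such an $r$ exists by density of saddles homoclinically related to $q$ inside $H(q,f)$ and the horseshoe construction at transverse homoclinic intersections (Remark~\ref{r.existenceofperiodicorbits}). Let $\pi$ be the $f$-period of $r$, and fix, as in Remark~\ref{r.contraction}, small $C^1$-neighbourhoods $\cV^{\fd}_\eta(f^i(r))$ of $W^{\mathrm{u}}_{loc}(f^i(r),f)$ for $i=0,\dots,\pi-1$ such that $f$ carries $\cV^{\fd}_\eta(f^i(r))$ into $\cV^{\fd}_\eta(f^{i+1}(r))$ via graph transform, and such that every disc in these neighbourhoods lies within $\delta/2$ of the orbit of $r$. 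Any orbit confined to these neighbourhoods for a full period is then $\delta$-dense in $H(q,f)$.

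Next, for any plaque $D \in \fF_0$, I would show that some uniform iterate $f^{j_0}(D)$ contains a sub-disc lying in $\cV^{\fd}_\eta(r)$. For $D \in \fF_0^-$ this is the $\lambda$-lemma at the transverse intersection $W^{\mathrm{u}}(q,f) \pitchfork W^{\mathrm{s}}(\cO(r))$. For $D \in \fF_0^+ \subset \cV^{\fd}_\epsilon(\fD)$, strict invariance of $\fD$ combined with Lemma~\ref{l.robust} produces, after finitely many uniform iterates, a sub-disc $\fd$-close to $\Delta^{\mathrm{u}} \subset W^{\mathrm{u}}(q,f) \cap \fD$, after which the previous $\lambda$-lemma applies. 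Shadowing the orbit of $r$ for one full period provides the $\delta$-density in $H(q,f)$; a final $\lambda$-lemma step, via $W^{\mathrm{u}}(\cO(r)) \pitchfork W^{\mathrm{s}}(q,f)$ (to return to $\fF_0^-$), or via the connecting sets $\Delta^{\mathrm{s}}, \Delta^{\mathrm{u}}$ (to return to $\fD \subset \fF_0^+$), brings the resulting sub-disc $\widehat D^{\pm}$ back into $\fF_0^{\pm}$. The total sojourn time is uniform in $D$ by compactness, and the expansion property~\eqref{i.defff2} of Definition~\ref{d.flipfloptail} is recovered by appending further iterates in the expanding regime near $\Gamma$ or along $W^{\mathrm{u}}_{loc}(q,f)$, exactly as in Section~\ref{ss.flipflop}. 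Taking $N$ to be a common multiple of $N_0$ commensurate with the returns completes the construction of $\fF$.

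The main obstacle is uniformizing this sub-disc argument across the two very different plaque types in $\fF_0^+$ and $\fF_0^-$: since the saddles of the blender $\Gamma$ have $\mathrm{u}$-index strictly larger than that of $q$, the periodic point $r \in H(q,f)$ cannot be homoclinically related to $\Gamma$ in the classical sense, so the $\lambda$-lemma cannot be applied directly to plaques in $\fF_0^+$. The workaround is to funnel every sub-disc argument for $\fF_0^+$ through the connecting set $\Delta^{\mathrm{u}} \in \fD \cap W^{\mathrm{u}}(q,f)$, exploiting strict invariance of $\fD$ to guarantee that forward iterates of any $D \in \fF_0^+$ contain sub-discs arbitrarily close to $\Delta^{\mathrm{u}}$, from which the homoclinic relation $W^{\mathrm{u}}(q,f) \pitchfork W^{\mathrm{s}}(\cO(r))$ can be invoked just as for $\fF_0^-$.
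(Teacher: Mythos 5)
Your overall strategy coincides with the paper's: start from the flip-flop family of \cite[Proposition 4.9]{BoBoDi2}, pick a periodic point $r$ homoclinically related to $q$ with $\delta/2$-dense orbit (Remark~\ref{r.existenceofperiodicorbits}), shadow one full period of $r$ through graph-transform neighbourhoods of $W^\mru_{loc}(f^i(r),f)$ to install the sojourns, and append extra iterates to recover the expansion. The paper organizes this by extracting from the \cite{BoBoDi2} construction an adapted family $\fD_q$ of discs near $W^\mru_{loc}(q,f)$ with the key property (p4) that the image under $f^N$ of \emph{every} plaque of $\fF$ (including those near the blender) already contains a disc of $\fD_q$; the sojourn argument is then run once, uniformly, for discs of $\fD_q$ only (Proposition~\ref{p.l.flipfloptail} and Lemma~\ref{l.preparar}).

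The one step of your proposal that does not work as stated is precisely the one you flag as the main obstacle: for $D\in\fF_0^+$ you claim that ``strict invariance of $\fD$ combined with Lemma~\ref{l.robust} produces, after finitely many uniform iterates, a sub-disc $\fd$-close to $\Delta^{\mathrm{u}}$.'' Strict invariance (Definition~\ref{d.strict}) only guarantees that $f^n(D)$ contains \emph{some} disc of $\fD$ for every $n$; it gives no control over which member of the family you land on, so it cannot target the particular disc $\Delta^{\mathrm{u}}$. Likewise, Lemma~\ref{l.robust} only says that discs of $\cV^\fd_\varepsilon(\fD)$ meet $W^\mrs_{loc}(\Gamma)$, which pushes iterates toward unstable manifolds of the blender, not toward $W^\mru(q,f)$. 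The correct mechanism — and the one the paper's property (p4) encodes — is item~\eqref{i.FFC4} (together with~\eqref{i.FFC3}) of Definition~\ref{d.flip}: every disc of $\fD$ crosses the compact core $K$ of the stable connecting set $\Delta^{\mathrm{s}}\subset W^\mrs(q,f)$ transversely and uniformly far from its boundary, so the $\lambda$-lemma applied at this intersection produces, after a uniform number of iterates, sub-discs $\fd$-close to $W^\mru_{loc}(q,f)$, i.e.\ in $\fD_q$; from there your argument via $W^\mrs(\cO(r))$ goes through. With that substitution your proof is complete and matches the paper's.
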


To prove this theorem we need to recall 
the construction of flip-flop families in \cite{BoBoDi2}. As the families in \cite{BoBoDi2}
do not have sojourns we need to adapt this construction to our context bearing in mind this fact.

\subsubsection{Flip-flop families associated to flip-flop configurations}

We now borrow the following result from \cite{BoBoDi2} and recall some steps of its proof.

\begin{prop}[Proposition 4.9 in \cite{BoBoDi2}]
\label{p.flipflopconf}
Let $f\in \Diff^1(M)$  be a diffeomorphism with a hyperbolic periodic point $q$ and a 
dynamical blender $\Ga$ in a flip-flop configuration.
 Let $U$ be a partially 
 hyperbolic neighbourhood of this configuration
 and  $\varphi\colon U\to\RR$ a continuous function that  is positive on the blender and negative on the orbit of $q$. 
 
 Then there are $N\geq1$ and a flip-flop family $\mathfrak{F}=\mathfrak{F}^+\bigsqcup \mathfrak{F}^-$ with respect to  $\varphi_N$ and
 $f^N$. 
 
 Moreover, given any $\delta>0$ the flip-flop family can be chosen such that the 
 plaques in $\mathfrak{F}^+$  are contained in a $\delta$-neighbourhood of $\Ga$
 and the plaques in $\mathfrak{F}^-$ are contained in a $\delta$-neigbourhood of $q$. 
\end{prop}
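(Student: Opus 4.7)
The plan is to construct both families directly from the geometry supplied by the flip-flop configuration together with the partially hyperbolic neighbourhood $U$ provided by Lemma~\ref{l.Laranjeiras}. Let $\cC^{\mathrm{uu}}$ denote the strictly $Df$-invariant, uniformly expanded cone field on $U$, and let $\pi$ denote the period of $q$. I fix $N$ a large multiple of $\pi$, to be determined at the end. I will declare $\fF^+$ to consist of small $i$-dimensional $C^1$ discs tangent to $\cC^{\mathrm{uu}}$ that lie in $\cV^{\fd}_{\varepsilon/2}(\fD)$ and whose first $N$ forward $f$-iterates remain in a $\delta$-neighbourhood of $\Gamma$, and $\fF^-$ to consist of small pieces of $W^{\mathrm{u}}_{\mathrm{loc}}(q,f^N)$ small enough that their first $N$ forward $f$-iterates remain in a $\delta$-neighbourhood of $q$. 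Since $\varphi>0$ on $\Gamma$ and $\varphi<0$ on $\cO(q)$, continuity and compactness yield $\alpha>0$ so that $\varphi_N\geq\alpha$ on every plaque of $\fF^+$ and $\varphi_N\leq-\alpha$ on every plaque of $\fF^-$ provided $\delta$ is small, which gives item~\eqref{i.flipflop1} of Definition~\ref{d.flipflop}.

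For item~\eqref{i.flipflop2}, the two required sub-plaques of $f^N(D)$ will be produced by exploiting the two connecting sets. If $D\in\fF^+$, strict invariance of $\fD$ supplies a chain of subdiscs along $N$ iterates of $f$ that stays in the $\delta$-tube around $\Gamma$ (for $\delta$ small) and yields a plaque of $\fF^+$; meanwhile, item~\eqref{i.FFC4} of Definition~\ref{d.flip} gives a transverse intersection of $D$ with $\De^{\mathrm{s}}$ at a point $x$ with $d(x,\partial D)>\epsilon$, and since $x\in W^{\mathrm{s}}(q,f)$ the $\lambda$-lemma ensures that for some $k\leq N$ the image $f^k(D)$ contains a subdisc $C^1$-close to $W^{\mathrm{u}}_{\mathrm{loc}}(q,f^N)$, i.e.\ a plaque of $\fF^-$. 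If $D\in\fF^-$, one uses that $\De^{\mathrm{u}}\subset W^{\mathrm{u}}(q,f)$ lies in the interior of $\fD$ by item~\eqref{i.FFC0}: iterating $D$ under $f$ eventually produces a disc in $\cV^{\fd}_\varepsilon(\fD)$ and hence a plaque of $\fF^+$, and a further transit through $\De^{\mathrm{s}}$ as above then extracts the required sub-plaque in $\fF^-$. Choosing $N$ at least as large as the longest transit time needed uniformly over $\fF^\pm$ accomplishes both productions in a single application of $f^N$.

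Item~\eqref{i.flipflop33} will follow from the uniform expansion of $\cC^{\mathrm{uu}}$ by $Df$: every plaque of $\fF^+\cup\fF^-$ is tangent to $\cC^{\mathrm{uu}}$ (for $\fF^+$ by construction, and for $\fF^-$ because the unstable manifold of the periodic point $q$ inside $U$ is tangent to the $Df$-invariant $i$-dimensional unstable distribution, which is contained in $\cC^{\mathrm{uu}}$). Taking $\lambda>1$ with $\|Df(v)\|\geq\lambda\|v\|$ for all $v\in\cC^{\mathrm{uu}}$, any concatenation $D_0,\dots,D_n$ with $D_{i+1}\subset f^N(D_i)$ satisfies
$$
d(f^{(n-i)N}(x),f^{(n-i)N}(y))\;\leq\;\lambda^{-iN}\,d(f^{nN}(x),f^{nN}(y))
$$
for all $x,y\in f^{-nN}(D_n)$, so that $\zeta_i\eqdef\lambda^{-iN}$ meets the requirement of Definition~\ref{d.flipflop}.

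The hard part will be the uniform calibration: $\delta$, $\varepsilon$, the plaque size, and the transit times for the two passages $\fF^+\to\fF^-$ (via $\De^{\mathrm{s}}$) and $\fF^-\to\fF^+$ (via $\De^{\mathrm{u}}$) must all be arranged so that a single $N$ works for every plaque simultaneously. A further delicate point is that during the brief transit outside the $\delta$-tubes the pointwise values of $\varphi$ can have the wrong sign; enlarging $N$ so that the bounded number of such ``bad'' iterates is overwhelmed in the Birkhoff average $\varphi_N$ by the dominant ``good'' portion near $\Gamma$ or $\cO(q)$ is the standard averaging device, entirely parallel to the control-of-averages arguments developed in \cite{BoBoDi2}.
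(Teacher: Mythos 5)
First, be aware that the paper does not actually prove this statement: it is imported verbatim from \cite[Proposition 4.9]{BoBoDi2}, and the surrounding text only recalls the structural properties (p1)--(p7) of the family constructed there. Your sketch has the right geometric skeleton (returns to the blender via $\De^\mathrm{u}$ and the strict invariance of $\fD$, returns to $q$ via $\De^\mathrm{s}$ and the $\lambda$-lemma, expansion coming from the cone field $\cC^\mathrm{uu}$), but there is a concrete gap in your definition of $\fF^-$. You declare the plaques of $\fF^-$ to be genuine subsets of $W^\mru_{loc}(q,f^N)$. With that choice item~\eqref{i.flipflop2} of Definition~\ref{d.flipflop} fails for $D\in\fF^+$: such a $D$ is a disc near the blender which in general is \emph{not} contained in $W^\mru(q,f)$, so the $\lambda$-lemma applied through its intersection with $\De^\mathrm{s}$ only produces subdiscs of $f^N(D)$ that are $C^1$-close to $W^\mru_{loc}(q,f)$ --- never subdiscs contained in it. This is exactly why the construction recalled after the statement introduces the adapted family $\fD_q$, a full $\fd$-neighbourhood of $W^\mru_{loc}(q,f)$, and takes $\fF^-$ to consist of sub-plaques of discs of $\fD_q$ (properties (p1)--(p3)); your $\fF^-$ has to be enlarged in the same way. (The analogous restriction in Section~\ref{ss.flip-flopfromhomoclinic} is legitimate only because there the entire family lives inside the invariant set $W^\mru(p,f)\cup W^\mru(q,f)$.)

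A second problem is the closing ``averaging device''. The iterates on which $\varphi$ has the wrong sign are not a set of bounded cardinality that gets diluted as $N\to\infty$. A point $x$ of a plaque $D\in\fF^+$ whose image $f^N(x)$ lies in the sub-plaque $D^-\in\fF^-$ must have essentially all of the block $\{0,\dots,N-1\}$ in the region where $\varphi>0$ \emph{and} essentially all of the block $\{N,\dots,2N-1\}$ in the region where $\varphi<0$. Consequently the plaques cannot be characterised by ``all of the first $N$ iterates stay in a $\delta$-tube'': with that definition $f^N(D)$ contains no sub-plaque of the opposite type, and conversely a disc of definite size close to $W^\mru_{loc}(q)$ never keeps its whole first $N$ iterates near $q$ (the unstable manifold escapes). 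The plaques must instead be cut out by conditions on the \emph{proportion} of time the $N$-block spends in each region, arranged compatibly across consecutive blocks; that bookkeeping is the real content of \cite[Lemma 4.11 and Proposition 4.9]{BoBoDi2} and is not obtained by merely enlarging $N$.
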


Note that Theorem~\ref{t.p.flipfloptail} is just the proposition above with additional sojourns.
Observe also that  the map $\varphi_N$ is only defined on $\bigcap_0^{N-1} f^{-i}(U)$ and 
that
the plaques of
 $\mathfrak{F}$ are contained in that set.  

We now review the construction in \cite{BoBoDi2}. For simplicity let us suppose that $q$ is a fixed point.
The definition of  the family $\mathfrak{F}$ in Proposition~\ref{p.flipflopconf}
 involves a preliminary family  of discs $\mathfrak{D}_q$ satisfying the following properties
(see \cite[Lemma 4.11]{BoBoDi2}):
\begin{enumerate}
 \item[(p1)]
 \label{i.1} The family of discs $\mathfrak{D}_q$  form  a small $C^1$-neighbourhood 
 (in the metric $\fd$) of  the local unstable manifold $W^\mru_{loc}(q,f)$. This neighbourhood can be taken arbitrarily small. 
 \item[(p2)]
 \label{i.2} The sets of the family $\mathfrak{F}^-$ are contained in discs in $\mathfrak{D}_q$.
 \item[(p3)]
 \label{i.3} Each disc in $\mathfrak{D}_q$ contains a plaque of $\mathfrak{F}^-$.
 \item[(p4)]
 \label{i.4} The image by $f^N$ of any plaque of $\mathfrak{F}$ contains a disc of $\mathfrak{D}_q$. 
\end{enumerate}

We have the following direct consequences of the properties above:
\begin{enumerate}
 \item[(p5)] 
 As $\fD_q$ can be taken contained in an arbitrarily small neighbourhood of $W^\mru_{loc}(q,f)$,
 we can  assume that $W^\mrs_{loc}(q,f)$ meets transversely every disc in $\fD_q$.
 \item[(p6)] 
 As $W^\mru_{loc}(q,f)$ can be chosen arbitrarily small, we can assume that $f^N$ 
 expands uniformly the vectors tangent to 
 the discs in $\fD_q$ (see also Remark~\ref{r.contraction}).
 \item[(p7)] 
 As a consequence of items~(p2),(p3), and (p4), the image by $f^N$ of
 any disc in $\fD_q$ contains a disc in $\fD_q$. 
 \end{enumerate}

We say that the flip-flop family $\fF$ is \emph{prepared with and adapted family $\fD_q$} 
if $\fF$ and $\fD_q$ satisfy  properties (p1)--(p7) above.

\subsubsection{Proof of Theorem~\ref{t.p.flipfloptail}}
Since a flip-flop family yields a prepared flip-flop family, Theorem~\ref{t.p.flipfloptail} is a consequence of 
the following proposition.
 
\begin{prop}
\label{p.l.flipfloptail} 
Let $f\in \Diff^1(M)$  be a diffeomorphism with a hyperbolic periodic point $q$ and a 
dynamical blender $\Ga$ in a flip-flop configuration,
 $U$ be a partially 
 hyperbolic neighbourhood of this configuration, and 
 $\varphi\colon U\to\RR$ a continuous function that  is positive on the blender and negative on the orbit of $q$. 

Let $N\ge 1$ and $\mathfrak{F}$ be  a  prepared flip-flop family  with an adapted family of discs $\fD_q$  with respect to $\varphi_N$ and $f^N$.

Then the flip-flop family $\fF$ sojourns along the homoclinic class $H(q, f^N)$.
\end{prop}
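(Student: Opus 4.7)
The plan is to follow the template of the proof of Proposition~\ref{p.flipflophomoclinic} from Section~\ref{ss.sojournsalong}, replacing $f$ by $F := f^N$ and the pair of saddles $p,q$ by the single saddle $q$. I exploit that $q$ is homoclinically related to many $F$-periodic points sitting inside $H(q,F)$. The preparation properties (p1)--(p7) together with Lemma~\ref{l.Laranjeiras} provide the hyperbolic setup: discs in $\fD_q$ are $C^1$-close to $W^\mru_{loc}(q,F)$, tangent to $\cC^{\mruu}$, meet $W^\mrs_{loc}(q,F)$ transversely, and $F$ expands vectors in $\cC^{\mruu}$ uniformly on the partially hyperbolic neighbourhood $U$ supplied by Lemma~\ref{l.Laranjeiras}.

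Given $\delta>0$, I first use the density of transverse homoclinic intersections $W^\mru(q,F)\cap W^\mrs(q,F)$ in $H(q,F)$ and the Smale horseshoes attached to them (as in Remark~\ref{r.existenceofperiodicorbits}) to produce a hyperbolic $F$-periodic point $r\in H(q,F)$, of $F$-period $\pi$, homoclinically related to $q$, whose $F$-orbit is $\delta/2$-dense in $H(q,F)$. Around each point of $\cO(r)$ I pick a small local unstable manifold $W^\mru_{loc}(F^i(r),F)$ and an $\fd$-neighbourhood $\cV^\fd_\eta(F^i(r))$; since $\cC^{\mruu}$ is strictly $DF$-invariant and uniformly expanded on $U$, a standard graph transform argument shows that any disc in $\cV^\fd_\eta(F^i(r))$ tangent to $\cC^{\mruu}$ is sent by $F$ to a disc containing one in $\cV^\fd_\eta(F^{i+1}(r))$; taking $\eta$ small enough ensures that each such disc lies within distance $\delta/2$ from $F^i(r)$.

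Starting from an arbitrary plaque $D\in\fF$, property (p4) yields a disc inside $F(D)$ lying in $\fD_q$. Since $r$ is homoclinically related to $q$, the $\lambda$-lemma provides a uniform $j_0$ such that $F^{1+j_0}(D)$ contains a disc in $\cV^\fd_\eta(r)$. Iterating $L\pi$ additional times as in the previous paragraph shadows $\cO(r)$, making the corresponding $F$-orbit segments $\delta$-dense in $H(q,F)$. A further uniform number $j_1$ of iterates, again by the $\lambda$-lemma applied along the return homoclinic orbit from $\cO(r)$ back to $q$, produces a disc in $\fD_q$. Property (p3) then furnishes a plaque of $\fF^-$ inside it, and item~\eqref{i.flipflop2} combined with a uniformly bounded number $m$ of additional $F$-iterates yields a sub-plaque of $\fF^+$ as well. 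Setting $N_\delta:=1+j_0+L\pi+j_1+m$ and pulling back along $F$ gives sub-plaques $\widehat D^+,\widehat D^-\subset D$ satisfying~\eqref{i.defff0} and~\eqref{i.defff1} of Definition~\ref{d.flipfloptail}.

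The main obstacle I anticipate is verifying the expansion estimate~\eqref{i.defff2} with the \emph{same} controlling sequence $(\zeta_i)$ attached to $\fF$. The entire orbit $\{F^i(\widehat D^\pm)\}_{0\le i\le N_\delta}$ remains inside $U$ and stays tangent to the strictly $DF$-invariant cone field $\cC^{\mruu}$ on which $DF$ is uniformly expanding (Lemma~\ref{l.Laranjeiras}), so backward contraction along the orbit is uniformly exponential. One should fix the sequence $(\zeta_i)$ associated to $\fF$ from the outset to dominate \emph{both} the contraction estimates intrinsic to $\fF$ and those coming from the partial hyperbolicity on $U$; this is possible since both decays are exponential and controlled by uniform constants on $U$. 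The uniformity of $j_0$, $j_1$, $\pi$, and $\eta$ across all $D\in\fF$, granted because the plaques of $\fF$ form a compact family close to $W^\mru_{loc}(q,F)$, is essential for this argument to close.
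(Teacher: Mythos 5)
Your proposal is correct and follows essentially the same route as the paper: both reduce to discs of $\fD_q$ via (p4), invoke Remark~\ref{r.existenceofperiodicorbits} to obtain a periodic point with $\delta$-dense orbit homoclinically related to $q$, shadow its orbit through $\fd$-neighbourhoods of local unstable manifolds using the $\lambda$-lemma, return to $\fD_q$ by (p7), and recover the expansion estimate with a uniform number of extra iterates, exactly as in the proof of Proposition~\ref{p.flipflophomoclinic}. The only bookkeeping point to watch is that Definition~\ref{d.flipfloptail} requires the same return time $N_\delta$ for both $\widehat D^+$ and $\widehat D^-$, which is easily arranged since the image of any plaque contains plaques of both signs.
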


\begin{proof}
We need the following lemma whose 
proof is similar to the one of Proposition~\ref{p.flipflophomoclinic}
and follows using the partially hyperbolicity in the set $U$.

\begin{lemm}
\label{l.preparar}
For every $\delta>0$ there is 
$L\in \NN$ such that every disc $D\in\fD_q$ contains a sub-disc $\widehat D$ such that
\begin{itemize}
\item
for every $x\in \widehat D$ the segment of orbit $\{x,\dots, f^L(x)\}$ is $\delta$-dense in 
$H(q,f^N)$,
\item
$f^{L}(\widehat D)$ contains a disc of $\fD_q$.
\item
for every $i\in\{0,\dots, L\}$ and  every pair of points $x,y\in \widehat D$  
 it holds 
 $$ 
 d(f^{L-i}(x),f^{L-i}(y))\leq\zeta \, \alpha^i\,  d(f^L(x) f^L(y)),
 $$
 for some constants $\zeta>0$ and $0<\alpha<1$ (independent of the points and the discs). 
\end{itemize}
\end{lemm}

\begin{proof}
Consider a hyperbolic periodic point $r_\delta\in H(q,f^N)$ 
homoclinically related to
$q$ and whose orbit is $\delta$-dense in $H(q,f^N)$ (recall Remark~\ref{r.existenceofperiodicorbits}). 
The $\lambda$-lemma implies that a compact part of $W^\mrs(r_\delta,f)$ intersects  transversely 
every disc in $\fD_q$.
Thus, again by the $\lambda$-lemma,  iterations of any disc  $D\in \fD_q$ accumulate to $W^\mru_{loc}(\cO(r_\delta),f)$.
 Again the $\lambda$-lemma and (p7) in the definition of  a prepared family imply that further iterations of $D$ 
contains a disc in $\fD_q$.  Since the number of iterates involved can be taken uniform, 
considering the corresponding pre-image 
one gets the disc $\widehat D$ satisfying the first two items of the lemma.
Finally, 
exactly as in the end of the proof of Proposition~\ref{p.flipflophomoclinic}
further iterations provides the uniform expansion property.
This ends the proof of  the lemma.
\end{proof}

To end the proof of the proposition recall that,
by condition (p4), the image of any plaque $D\in \fF$ contains a disc
in $\fD_q$. This provides the ``sojourns property" for $\fF$ (may be one needs to add some extra additional
``final" iterates for recovering the expansion).  
\end{proof}

\vskip .cm

\subsection{Control of averages in flip-flop configurations}
\label{ss.controlofaveragesflipflop}
As a first consequence of Theorem~\ref{t.p.flipfloptail} 
we get measures with controlled averages and full support in a homoclinic class. 

\begin{theor}\label{t.p.tail} 
Let $f\in \Diff^1(M)$ be a diffeomorphism and 
$\varphi\colon M\to \RR$ be a continuous map.  
Suppose that $f$ has a dynamical blender  $\Gamma$ 
and hyperbolic periodic point $q$  
that  are  in flip-flop configuration with respect to $\varphi$ and $f$.
Then there is an ergodic measure $\nu$ whose support is the whole homoclinic class $H(q,f)$ such that 
 $$
 \int \varphi d\, \nu=0.
 $$
\end{theor}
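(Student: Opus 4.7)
The plan is to deduce the theorem from Theorem~\ref{t.p.flipfloptail}, which manufactures a flip-flop family with sojourns along $H(q,f)$, together with Corollary~\ref{c.flipfloptail}, which converts such a family into an ergodic measure of full support and prescribed average, modulo a routine passage from $f^N$-ergodicity to $f$-ergodicity.

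First I would apply Theorem~\ref{t.p.flipfloptail} to the given data $f$, $q$, $\Gamma$, $\varphi$. The sign hypotheses on $\varphi$ are precisely those required, so the theorem supplies an integer $N\ge 1$ together with a flip-flop family $\mathfrak{F}$ associated to the function $\varphi_N$ and the map $f^N$, sojourning (for $f$) along the homoclinic class $H(q,f)$. Next, picking any plaque $D\in\mathfrak{F}$, Theorem~\ref{t.flipfloptail} furnishes a point $x_D\in D$ that is controlled at any scale with a long sparse tail for $\varphi_N$ and $H(q,f)$, and Corollary~\ref{c.flipfloptail} then yields a weak$\ast$ accumulation measure $\mu$ of the $f^N$-empirical measures of $x_D$ whose almost every $f^N$-ergodic component $\nu'$ satisfies $\operatorname{supp}(\nu')=H(q,f)$ and $\int \varphi_N\,d\nu'=0$. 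I fix one such $\nu'$.

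The remaining task is to upgrade the $f^N$-ergodic measure $\nu'$ to an $f$-ergodic measure. I would set
$$
\nu \eqdef \frac{1}{N}\sum_{i=0}^{N-1} f^i_{\ast}\nu',
$$
which is tautologically $f$-invariant, and verify three properties. For \emph{$f$-ergodicity}: any $f$-invariant Borel set $A$ is also $f^N$-invariant, hence $\nu'(A)\in\{0,1\}$ by the $f^N$-ergodicity of $\nu'$; the $f$-invariance of $A$ gives $f^i_{\ast}\nu'(A)=\nu'(f^{-i}(A))=\nu'(A)$, so $\nu(A)=\nu'(A)\in\{0,1\}$. For the \emph{support}: since $H(q,f)$ is $f$-invariant one has $f^i(H(q,f))=H(q,f)$, and therefore $\operatorname{supp}(\nu)=\bigcup_{i=0}^{N-1} f^i(\operatorname{supp}(\nu'))=H(q,f)$. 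For the \emph{average}: the identity
$$
\int \varphi_N\,d\nu' \;=\; \sum_{i=0}^{N-1}\int \varphi\circ f^i\,d\nu' \;=\; \sum_{i=0}^{N-1}\int\varphi\,d(f^i_{\ast}\nu') \;=\; N\int\varphi\,d\nu
$$
(up to an overall factor if $\varphi_N$ denotes the Birkhoff average rather than the Birkhoff sum) together with $\int\varphi_N\,d\nu'=0$ gives $\int\varphi\,d\nu=0$.

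Since the technical core---the construction of the sojourning flip-flop family and the extraction of a controlled point with simultaneously prescribed density and vanishing average---has already been packaged into Theorems~\ref{t.p.flipfloptail} and \ref{t.accumulation}, essentially the only obstacle here is the orbit-averaging bookkeeping of the last step, which is routine. The one subtlety worth flagging is that one should take the average over a full period $N$ (rather than over the smaller orbit period $k\mid N$ of $\nu'$ under $f_{\ast}$); the displayed arguments above go through verbatim in that case.
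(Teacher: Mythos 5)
Your overall architecture matches the paper's: apply Theorem~\ref{t.p.flipfloptail} to get a sojourning flip-flop family for $f^N$ and $\varphi_N$, extract a controlled point and a limit measure via Theorem~\ref{t.flipfloptail}/Corollary~\ref{c.flipfloptail}, and pass from $f^N$ to $f$ by averaging the pushforwards $f^i_*$, $i=0,\dots,N-1$ (your ergodicity and average bookkeeping in that last step is correct and essentially identical to the paper's).

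There is, however, a genuine gap in the middle step. You take \emph{any} plaque $D\in\fF$ and assert that Corollary~\ref{c.flipfloptail} gives $\supp(\nu')=H(q,f)$. What the corollary actually delivers is that the orbit of $\mu$-almost every point is \emph{dense in} $Y=H(q,f^N)$, i.e.\ only the inclusion $\supp(\nu')\supseteq H(q,f^N)$. Nothing in the abstract criterion forces the orbit of the controlled point $x_D$ to stay in (or accumulate only on) the homoclinic class: the plaques produced by Theorem~\ref{t.p.flipfloptail} are merely discs in a $\delta$-neighbourhood of $\Gamma\cup\cO(q)$ tangent to the unstable cone field, and for a generic plaque the point $x_D$ and its $\omega$-limit set could a priori meet points outside $H(q,f)$ (e.g.\ elsewhere in the partially hyperbolic neighbourhood or in a strictly larger chain recurrence class), in which case $\supp(\nu')$ would be strictly larger than $H(q,f)$. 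The paper closes this by using the ``Moreover'' clause of Theorem~\ref{t.p.flipfloptail}: it chooses the special plaque $\Delta\subset W^\mru_{loc}(q,f^N)$, so that all the nested plaques $D_k$ defining $x_\Delta=\bigcap_k f^{-n_k}(D_k)$ lie in $W^\mru(q,f^N)$ and transversely meet $W^\mrs(q,f)$; each $f^{-n_k}(D_k)$ therefore contains a point of the closed invariant set $H(q,f^N)$, and compactness gives $x_\Delta\in H(q,f^N)$. Only then is the whole forward orbit of $x_\Delta$ contained in $H(q,f^N)$, so the limit measure is supported \emph{inside} the class, and combined with the density conclusion one gets the equality of supports. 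You need to add this choice of initial plaque and the accompanying claim before your averaging step.
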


\begin{proof}Under the hypotheses of the theorem, 
Theorem~\ref{t.p.flipfloptail} provides a flip-flop family $\fF$ associated 
to $f^N$ and $\varphi_N$ 
which sojourns in $H(q,f^N)$. 
By Theorem~\ref{t.flipfloptail} every plaque $D\in \fF$ contains a point $x_D$ that 
is controlled at any scale
with a  long sparse tail with respect $\varphi_N$ and  $H(q, f^N)$. 
Note that $W^\mru_{loc}(q,f)$ contains a plaque $\De\in \fF$.
Let  $x_\De\in \Delta$ be the controlled point given by Theorem~\ref{t.flipfloptail}.

\begin{clai}
$x_\Delta\in H(q,f^N)$.
\end{clai}

\begin{proof} 
 By construction, there is a sequence of discs $D_k$ and numbers $n_k\to \infty$ such
 that $D_{k+1}\subset f^{n_k}(D_k)$ and $D_0=\Delta$. Thus $f^{-n_k}(D_k)\subset \Delta$
 is a decreasing nested sequence of compact sets that 
 satisfies
$$
x_\Delta=
\bigcap_{k\in \NN} f^{-n_k}(D_k).
$$
The fact that this intersection is just a point follows from the expansion property in the definition of a flip-flop family.
Since $D_k\subset W^\mru(q,f^N)$ and   intersects transversely  $W^\mrs(q,f)$ we have that
$D_k$ contains a point  $x_k\in H(q,f^N)$. 
Hence $y_k=f^{-n_k}(x_k)\in H(q,f^N)\cap f^{-n_K}(D_k)$. 
Thus $y_k\to x_\De$ and 
$x_\Delta\in H(q, f^N)$. 
\end{proof}

Take any measure $\mu$ that is an
accumulation point of the measures 
$$
\mu_n\eqdef \frac1n\sum_0^{n-1}\delta(f^{iN}(x_\Delta)).
$$ 
As the point  $x_\Delta$ is controlled at any scale  with  a long sparse tail
with respect to  $\varphi$ and $H(q,f^N)$, 
Theorem~\ref{t.accumulation} implies that  $\mu$-almost every point $y$ has a dense orbit in  $H(q,f^N)$ and the average of $\varphi_N$ along the $f^N$-orbit of $y$  is zero.

To conclude the proof of the proposition  consider the measure 
$$
\eta \eqdef 
\frac 1N\sum_0^{N-1}  f^i_*(\mu).
$$ 
Now the $f$-orbit of $\eta$-almost every point $y$ is dense in $H(q,f)$ and satisfies
$\varphi_\infty(y)=0$.
By construction,  any ergodic component 
$\nu$ of $\eta$ satisfies the conclusion in the theorem.  
\end{proof}

\begin{rema}
Let us compare Theorem~\ref{t.p.tail} with Corollary~\ref{c.function}. In both cases there is a
continuous map $\varphi$ with a ``positive" and  a ``negative region".
Corollary~\ref{c.function} is a perturbation result while Theorem~\ref{t.p.tail} does not involve perturbations.
 The corollary  provides a (locally)  open and dense subset of diffeomorphisms $f$ 
 having an ergodic measure
 $\mu_f$
 with $\int \varphi \, d\mu_f=0$. In the theorem the mere existence of the
  flip-flop configuration for  $f$ and $\varphi$  gives 
 an ergodic measure
 $\mu_f$ of $f$
 with $\int \varphi\,  d\mu_f=0$.  

In the corollary  the support of the ergodic measure is not completely determined
(either $H(p,f)$ or $H(q,f)$) while in
the theorem the support is $H(q,f)$ ($q$ is the saddle in the flip-flop configuration).

\end{rema}

\subsection{Ergodic non-hyperbolic measures with full support}
\label{ss.proofoftheoremtcycleandother}
In this section we conclude the proofs of Theorems~\ref{t.cycle} and \ref{t.ctail}.

\subsubsection{Proof of Theorem~\ref{t.cycle}}
\label{sss.proofoftheoremtcycle}
Consider   a $C^1$-open set $\cU\subset \Diff^1(M)$ consisting of diffeomorphisms $f$ 
 having hyperbolic periodic points $p_f$ and $q_f$, depending continuously on $f$, 
  of different indices
whose  chain recurrence classes $C(p_f,f)$ and $C(q_f,f)$ coincide and have  a partially hyperbolic splitting with one-dimensional center. 
This implies that the $\mru$-indices of $p_f$ and $q_f$ are
$j+1$ and $j$ for some $j$.

Consider  $f\in \cU$. We prove that there are a neighbourhood $\cV_f$ of $f$ and an open and dense
subset $\cZ_f$ of $\cV_f$ where the conclusion of the theorem holds
(every $g\in\cZ_f$ has a nonhyperbolic ergodic measure
$\mu_g$ 
 whose support
is $H(p_g,g)=H(q_g,g)$).  
 The theorem follows
considering the set $\cV=\bigcup_{f\in \cU} \cZ_f$ that is, by construction, open and dense in $\cU$.
Thus, in what follows, we fix $f\in \cU$ and study a local problem in a neighbourhood of $f$.

The partial hyperbolicity of $C(p_f,f)$ gives neighbourhoods $V$ of $C(p_f,f)$ 
and $\cV_f$ of $f$ such  that
for every $g\in \cV_g$  the  maximal invariant set of $g$ in $V$ 
\begin{equation}
\label{e.upsilon}
\Upsilon_g \eqdef \bigcap_{i\in \ZZ} g^i(V)
\end{equation}
has a partially hyperbolic splitting  with one-dimensional center.
Since chain recurrence classes depend upper semi-continuously, 
after shrinking $\cV_f$, if necessary, we can assume that  $C(p_g,g)\subset V$ for every $g\in \cV_f$. Thus $H(p_g,g)=H(q_g,g)\subset
C(p_g,g)\subset \Upsilon_g$ and these sets are partially hyperbolic with one-dimensional center.
Hence \cite[Theorem E]{BDPR} (see Remark~\ref{r.bdpr}) gives an open and dense subset
$\cW_f$ 
 of $\cV_f$ such that for every $f\in \cW_f$ the homoclinic classes of $p_f$ and $q_f$ are equal
 (here the hypothesis on the one-dimensional center is essential).
 To prove the 
theorem it is enough to get an open an dense subset $\cZ_f$ of $\cW_f$ (thus of $\cV_f$) 
consisting of diffeomorphisms $g$  having a nonhyperbolic ergodic  measure $\mu_{g}$ whose support is $H(p_g,g)=H(q_g,g)$.
 
By \cite{BD-cycles} there is an open and dense subset $\cC_f$ of $\cW_f$ 
such that every $g\in \cC_f$ 
has transitive hyperbolic sets $\Lambda^p_g$ and $\Lambda^q_g$, with 
 $p_g\in \Lambda^p_g$
and $q_g\in \Lambda^q_g$,
having a robust cycle (i.e.,  there is a neighbourhood of $g$ consisting of diffeomorphisms 
$h$ such that the invariant sets of $\Lambda^p_h\ni p_h$
and $\Lambda^q_h\ni q_h$ meet cyclically). 
Now \cite[Proposition 5.2]{BoBoDi2}
(Robust cycles yield spawners) and
\cite[Proposition 5.3]{BoBoDi2} 
(Spawners yield split flip-flop configurations)
gives an open and dense subset $\cZ_f$ of $\cC_f$  such that every $g\in\cZ_f$ has  a dynamical blender 
$\Ga_g\subset C(p_g,g)$
that is in a flip-flop configuration with $q_g$. Moreover, the  
blender $\Ga_g$ and the saddle $p_g$ are
\emph{homoclinically related} (their invariant manifolds intersect cyclically and transversely).
As in the case of a homoclinic relation between periodic points this implies that $\Ga_g \subset C(p_g,g)$.
 
For $g\in \cZ_f$ consider a continuous function $\mathrm{J}^c_g$ defined on 
the partially hyperbolic set
$\Upsilon_g$ 
as the
logarithm of the center derivative of $g$, recall \eqref{e.logmap}. Note that $\Gamma_g\cup  \cO(q_g)\subset \Upsilon_g$.
Up considering an adapted metric,   we can assume the map $\mathrm{J}^c_g$ is positive on the blender $\Ga_g$ and negative  
on the orbit of $q_g$.  We extend the map $\mathrm{J}^c_g$ to  a continuous map defined on the whole manifold
(with a slight abuse of notation, we denote this new map also by $\mathrm{J}^c_g$).
For diffeomorphisms $g\in \cZ_g$
Theorem~\ref{t.p.tail} gives an ergodic measure $\mu_g$ whose support is  $H(q_g,g)=H(p_g,g)$  and such 
that $\int \mathrm{J}^c_g\, d\mu_g=0$. As $\mu_g$ is supported on 
$ H(q_g,g)\subset \Upsilon_g$,
the function $\mathrm{J}^c_g$ coincides with the logarithm of the center
derivative of $g$ in the support of $\mu_g$. Thus  $\int \mathrm{J}^c_g\, d\mu_g=0$ is the 
center Lyapunov exponent of $\mu_g$. This concludes the proof of the theorem.  \qed

\subsubsection{Proof of Theorem~\ref{t.ctail}}
\label{sss.proofoftctail}
In the previous sections we dealt  with averages of continuous functions. For the analysis of Lyapunov
exponents let us recall that in 
\cite{BoBoDi2} the partial hyperbolicity of the set guarantees the continuity
of central (one-dimensional) derivatives in a locally maximal invariant set (these maps are continuously extended
to a neighbourhood of the set). Here we
we argue as in previous sections keeping in mind the following three facts: (1) The existence of a flip-flop
configuration is a hypothesis.  (2) The filtrating neighbourhood implies that it contains the homoclinic classes. (3) The existence of invariant  cone fields in the filtrating neighbourhood
gives the partial hyperbolicity with one-dimensional center of the maximal invariant set in $U$ and
thus of the homoclinic classes.

\section{Applications to robust transitive  diffeomorphisms}
\label{s.applications}

In this section we prove Theorem~\ref{t.c.openanddense}
and  Corollary~\ref{c.averages}.
Recall that $\cR\cT(M)$ is the  (open) subset of $\diff^1(M)$ 
of diffeomorphisms that are
 robustly transitive and
 have a pair  of hyperbolic periodic points of different indices
 and 
a partially hyperbolic  splitting with one-dimensional center.
We prove the following proposition:

\begin{prop}\label{p.l.h(q)} 
There is a $C^1$-open and dense subset $\cI(M)$ of $\cR\cT(M)$ such that for 
every $f\in \cI(M)$ there are hyperbolic periodic points 
$p_f$ and $q_f$ 
of different indices such that
$$
H(p_f,f)=H(q_f,f)=M.
$$ 
\end{prop}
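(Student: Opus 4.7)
The plan combines the $C^1$-generic coincidence of homoclinic and chain recurrence classes of Bonatti-Crovisier with the robust statement of Remark~\ref{r.bdpr} (based on \cite{BDPR}) and with the $C^1$-robust minimality of the strong invariant foliations for partially hyperbolic robustly transitive diffeomorphisms with one-dimensional center.

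First I will establish density. Fix $f\in\cR\cT(M)$. Robust transitivity forces $M$ to be the unique chain recurrence class of $f$, so $C(r,f)=M$ for every hyperbolic periodic point $r$. By the definition of $\cR\cT(M)$, there are hyperbolic periodic points $p_f,q_f$ of different consecutive indices $d^{\mruu}$ and $d^{\mruu}+1$, depending continuously on $f$. The $C^1$-generic theorem of Bonatti-Crovisier provides a residual subset $\cR_0\subseteq \diff^1(M)$ on which $H(r,f)=C(r,f)$ for every hyperbolic periodic point $r$ of $f$. Hence on the residual subset $\cR_0\cap \cR\cT(M)$ we have $H(p_f,f)=H(q_f,f)=M$, proving $C^1$-density of the desired conclusion.

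Next I will argue openness. By Remark~\ref{r.bdpr}, the partial hyperbolicity with one-dimensional center of the chain recurrence class $C(p_f,f)=M$ together with the presence of saddles $p_f,q_f$ of different indices provides a $C^1$-open and dense subset $\cU_1\subseteq \cR\cT(M)$ on which $H(p_f,f)=H(q_f,f)$. On the other hand, by the classical $C^1$-robust minimality result for partially hyperbolic robustly transitive diffeomorphisms with one-dimensional center, there is a second $C^1$-open and dense subset $\cU_2\subseteq \cR\cT(M)$ on which both strong foliations are minimal. For $f\in \cU_2$, since $p_f$ has $\mru$-index $d^{\mruu}$, the unstable manifold $W^{\mru}(p_f,f)$ coincides with the strong unstable leaf $W^{\mruu}(p_f,f)$, hence is dense in $M$; symmetrically $W^{\mrs}(q_f,f)=W^{\mrss}(q_f,f)$ is dense in $M$. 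Combining this density with the transversality of the partially hyperbolic bundles, and applying the $\lambda$-lemma at the transverse intersection $p_f\in W^{\mru}(p_f,f)\pitchfork W^{\mrs}(p_f,f)$, one deduces that the set of transverse homoclinic intersections of $\cO(p_f)$ accumulates on every plaque of the stable foliation, and therefore on a dense subset of $M$; the analogous argument applies to $q_f$. This yields $H(p_f,f)=H(q_f,f)=M$ on the open and dense set $\cI(M)\eqdef \cU_1\cap \cU_2$.

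The main obstacle is that the equality $H(p_f,f)=M$ is \emph{not} formally a $C^1$-open condition: the homoclinic class $f\mapsto H(p_f,f)$ varies only lower semi-continuously, so from $H(p_{f_0},f_0)=M$ one only deduces $\epsilon$-density of $H(p_f,f)$ on a neighbourhood of $f_0$ that shrinks as $\epsilon\to 0$. The essential extra input that bypasses this limitation is the $C^1$-robust minimality of the strong stable and unstable foliations in $\cR\cT(M)$, which promotes the residual fullness of the homoclinic class coming from Bonatti-Crovisier into a genuinely open and dense one, by upgrading the density of the stable and unstable manifolds of the chosen saddles from a $C^1$-generic property to a $C^1$-robust one.
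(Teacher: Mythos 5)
Your overall route is the same as the paper's: combine the $C^1$-robust minimality of the strong foliations with \cite[Theorem E]{BDPR} (which equates the homoclinic classes of the two saddles on an open and dense set), and use the density of a strong leaf plus forward iteration of small unstable discs to fill out the class. The paper additionally makes a preliminary reduction to the subsets $\cR\cT_j(M)$ and organizes the argument through Claim~\ref{cl.sacocheio}, but these are the same mechanisms you describe. Your opening density argument via Bonatti--Crovisier is correct but redundant, since the openness argument, once complete, already yields an open \emph{and dense} set.

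There is, however, one genuine gap. You assert that ``there is a $C^1$-open and dense subset $\cU_2$ on which \emph{both} strong foliations are minimal.'' The results of \cite{BDU,RHU} that the paper invokes (Lemma~\ref{l.minimal}) only give that the \emph{union} $\cM^{\mathrm{s}}_j(M)\cup\cM^{\mathrm{u}}_j(M)$ is open and dense, i.e.\ that \emph{at least one} of the two strong foliations is robustly minimal on an open and dense set; the simultaneous minimality of both is not provided by the cited literature and cannot be taken as ``classical.'' As written, your set $\cU_2$ has no justification. Fortunately your own structure already contains the repair: if only $\cF^{\mathrm{uu}}$ is minimal you obtain $H(p_f,f)=M$ (since $W^{\mru}(p_f,f)$ is a dense strong unstable leaf and $W^{\mrs}(p_f,f)$, having dimension $d^{\mrss}+1$, transversely cuts every sufficiently large strong unstable disc), and if only $\cF^{\mathrm{ss}}$ is minimal you obtain $H(q_f,f)=M$ symmetrically; in either case the set $\cU_1$ coming from \cite{BDPR} gives $H(p_f,f)=H(q_f,f)$ and hence both classes equal $M$. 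So you should replace ``both foliations minimal'' by ``at least one foliation minimal'' and let the BDPR step transfer fullness from one saddle to the other — this is exactly how the paper closes the argument. A secondary, minor point: the step ``applying the $\lambda$-lemma at $p_f\in W^{\mru}\pitchfork W^{\mrs}$'' is vaguer than needed; the clean statement is the uniform-$K$ transversality argument above (minimality plus compactness give a radius $K$ such that every strong unstable disc of radius $\ge K$ meets the complementary invariant manifold transversely).
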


In view of this proposition, Theorem~\ref{t.c.openanddense} is a direct consequence of Theorem~\ref{t.cycle} 
(in $\cR\cT(M)$ the unique chain recurrence class is the whole manifold $M$) and Corollary~\ref{c.averages} is a direct  consequence of  Corollary~\ref{c.function}.

\begin{proof}[Proof of Proposition~\ref{p.l.h(q)}]
The diffeomorphisms $f\in \cR\cT(M)$ have a  partially hyperbolic splitting with one-dimensional center
$TM=E^{\mathrm{uu}}\oplus E^{\mathrm{c}}\oplus E^{\mathrm{ss}}$,
where 
$E^{\mathrm{uu}}$ is uniformly expanding,
$E^{\mathrm{ss}}$ is uniformly contracting,
 and $\dim (E^{\mathrm{c}})=1$.
This implies that  there is $j\in \{1,\dots,\dim(M)-2\}$ such that
  every hyperbolic periodic point $p$  of $f$ has $\mathrm{s}$-index either $j$ or $j+1$, where $j=\dim (E^{\mathrm{ss}})$.

  We define 
  $\cR\cT_{j}(M)$ as the open subset of $\cR\cT(M)$ consisting of diffeomorphism whose saddles
  have $\mathrm{s}$-indices either $j$ or $j+1$. 
  The next lemma is a consequence of the ergodic closing lemma in \cite{Ma}, for an explicit formulation of this
  result see \cite[Theorem in page 4]{DPU}.
   
  \begin{lemm}
The set   $\bigcup_{j=1}^{\dim (M)-2}  \cR\cT_{j}(M)$ is open and dense in $\cR\cT(M)$.
  \end{lemm}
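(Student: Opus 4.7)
The strategy is to show that every $f \in \cR\cT(M)$ automatically belongs to a unique $\cR\cT_j(M)$, where $j = d^\mrss_f \eqdef \dim(E^\mrss_f)$ is determined by the partially hyperbolic splitting of $f$. Once this is established, the union in the statement equals all of $\cR\cT(M)$ and is in particular open and dense.

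First I would recall that the partially hyperbolic splitting $TM = E^\mruu \oplus E^\mrc \oplus E^\mrss$ of $f \in \cR\cT(M)$ varies $C^1$-continuously with $f$, so the integers $d^\mruu \eqdef \dim(E^\mruu)$ and $d^\mrss \eqdef \dim(E^\mrss)$ are locally constant on $\cR\cT(M)$; since both bundles are non-trivial and $\dim(E^\mrc) = 1$, we have $1 \le d^\mrss \le \dim(M)-2$. This local constancy will imply openness of each $\cR\cT_j(M)$.

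Next, for any hyperbolic periodic point $p$ of $f$, the Oseledets splitting along $\cO(p)$ refines the dominated splitting: the restriction of $Df$ to $E^\mruu$ produces $d^\mruu$ strictly positive Lyapunov exponents, the restriction to $E^\mrss$ produces $d^\mrss$ strictly negative exponents, and the one-dimensional bundle $E^\mrc$ contributes a single Lyapunov exponent which is nonzero by hyperbolicity of $p$. Therefore the $\mrs$-index of any saddle of $f$ equals $d^\mrss$ or $d^\mrss+1$. Setting $j = d^\mrss_f$ puts $f \in \cR\cT_j(M)$, and by the previous paragraph this membership is preserved under $C^1$-perturbations.

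The role of the ergodic closing lemma of Ma\~n\'e \cite{Ma}, in the version formulated in \cite[Thm.~p.~4]{DPU}, is to guarantee that this $\mrs$-index dichotomy really controls \emph{all} of the periodic behaviour in a $C^1$-open-and-dense way (ruling out spurious indices arising from non-hyperbolic periodic orbits after perturbation). The only subtle point in the argument is this compatibility between the Oseledets splitting of arbitrary periodic orbits and the globally defined dominated splitting, and I expect that to be the main (routine) obstacle; once it is in place, the decomposition $\cR\cT(M) = \bigsqcup_{j=1}^{\dim(M)-2} \cR\cT_j(M)$ into disjoint open pieces follows immediately.
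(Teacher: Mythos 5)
Your argument is correct for the definitions as literally stated, and it takes a genuinely different route from the paper, which offers no direct proof at all: the authors simply cite Ma\~n\'e's ergodic closing lemma \cite{Ma} in the formulation of \cite{DPU}. What you do instead is observe that the citation is unnecessary under the stated hypotheses: every $f\in\cR\cT(M)$ carries a global dominated splitting $E^{\mruu}\oplus E^{\mrc}\oplus E^{\mrss}$ with $E^{\mruu}$ uniformly expanding, $E^{\mrss}$ uniformly contracting and $\dim E^{\mrc}=1$, so the eigenvalues of $Df^{\pi}$ at any periodic point respect the splitting and every saddle has $\mrs$-index $\dim E^{\mrss}$ or $\dim E^{\mrss}+1$; since membership in $\cR\cT(M)$ already presupposes a pair of saddles of different indices, both values are realized, hence $f\in\cR\cT_j(M)$ with $j=\dim E^{\mrss}$ and the union is in fact all of $\cR\cT(M)$. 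What the ergodic closing lemma buys is robustness of the conclusion under a weaker definition in which the centre bundle is only assumed non-uniformly hyperbolic and saddles of both indices are not postulated a priori: Ma\~n\'e's lemma then produces, densely in $\cR\cT(M)$, periodic orbits whose central exponent has either sign, which is why the paper claims only openness and density rather than equality. One caveat: your closing paragraph, which assigns the ergodic closing lemma the role of ``ruling out spurious indices arising from non-hyperbolic periodic orbits after perturbation,'' is confused --- the index constraint is a pointwise consequence of domination along each periodic orbit and involves no perturbation --- but nothing in your actual argument depends on that paragraph.
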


In view of this lemma the proposition is a consequence of the following result:

\begin{lemm}\label{l.l.h(q)} 
Let $j\in \{1,\dots,\dim(M)-2\}$.
There is an open an dense subset $\cI_j(M)$ of $\cR\cT_j(M)$ such that  
every $f\in \cI_j(M)$ has hyperbolic periodic points  $p_f$ and $q_f$ of different indices such that
$$
H(p_f,f)=H(q_f,f)=M.
$$ 
\end{lemm}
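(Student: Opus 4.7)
The plan is to combine three ingredients: the $C^1$-generic coincidence of homoclinic and chain recurrence classes from \cite{BC}, the robust cycle and blender technology from \cite{BD-cycles, BoBoDi2}, and the robust minimality of strong foliations available in the robustly transitive partially hyperbolic setting with one-dimensional center.

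Let $u_0 \eqdef \dim(E^{\mathrm{uu}})$. By definition of $\cR\cT_j(M)$, every $f \in \cR\cT_j(M)$ admits hyperbolic periodic points of both $\mathrm{u}$-indices $u_0$ and $u_0+1$; I fix choices $p_f, q_f$ of each type, which admit continuous hyperbolic continuations on a $C^1$-neighborhood of $f$. Since $f$ is robustly transitive, $M$ is the unique chain recurrence class, so $C(p_f,f)=C(q_f,f)=M$ for every $f \in \cR\cT_j(M)$. By \cite{BC}, there is a $C^1$-residual $\cR \subset \cR\cT_j(M)$ on which $H(p,f) = C(p,f)$ for every hyperbolic periodic point $p$; in particular $H(p_f,f)=H(q_f,f)=M$ for every $f \in \cR$, and hence $\cI_j(M)$ is $C^1$-dense in $\cR\cT_j(M)$.

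For openness I would argue in two steps. First, starting from any $f \in \cR$, an arbitrarily small $C^1$-perturbation puts $f$ into a $C^1$-open set $\cC$ of diffeomorphisms having a $C^1$-robust heterodimensional cycle between the continuations of $p_f$ and $q_f$ (via \cite{BD-cycles, ABCDW}) and, by \cite[Propositions 5.2 and 5.3]{BoBoDi2}, a dynamical blender $\Gamma$ in flip-flop configuration that is homoclinically related to both saddles; combining this with Remark~\ref{r.bdpr} (i.e.\ \cite[Theorem E]{BDPR}) in the present partially hyperbolic one-dimensional-center setting yields a $C^1$-open and dense subset of $\cC$ on which $\Gamma_g \subset H(p_g,g) = H(q_g,g)$ for every $g$. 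Second, to upgrade this common homoclinic class from $\Gamma_g$ to all of $M$ in a robust way, I would invoke the existence of a $C^1$-open and dense subset of $\cR\cT_j(M)$ on which \emph{both} strong foliations $W^{\mathrm{uu}}$ and $W^{\mathrm{ss}}$ are minimal — this comes from the foliation-minimality dichotomy for robustly transitive partially hyperbolic diffeomorphisms with one-dimensional center applied both to $f$ and to $f^{-1}$ (which also sits in an analogous robustly transitive class under the swap $E^{\mathrm{uu}}\leftrightarrow E^{\mathrm{ss}}$). On the resulting $C^1$-open and dense subset, $W^{\mathrm{u}}(p_g,g) \supseteq W^{\mathrm{uu}}(p_g,g)$ and $W^{\mathrm{s}}(p_g,g) \supseteq W^{\mathrm{ss}}(p_g,g)$ are each dense in $M$; by the $\lambda$-lemma the transverse homoclinic intersections of $p_g$ are then dense in $M$, forcing $H(p_g,g)=M$, and symmetrically $H(q_g,g)=M$.

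The main obstacle is this last step: density of a homoclinic class is \emph{a priori} only a residual property, so turning the residual density from \cite{BC} into a genuinely open one relies crucially on promoting the robust minimality of the strong foliations to a $C^1$-open and dense statement in $\cR\cT_j(M)$; making this minimality argument precise in the exact form required here (rather than merely as a residual statement) is the delicate point of the proof.
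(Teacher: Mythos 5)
Your overall strategy---minimality of strong foliations combined with \cite[Theorem E]{BDPR} to identify the homoclinic classes of the two indices---is the right circle of ideas and close to the paper's, but there is a genuine gap at the decisive step: you invoke a $C^1$-open and dense subset of $\cR\cT_j(M)$ on which \emph{both} strong foliations are minimal. The dichotomy of \cite{BDU,RHU} (the paper's Lemma~\ref{l.minimal}) only provides an open and dense set on which \emph{at least one} of $\cF^{\mathrm{ss}}_f$, $\cF^{\mathrm{uu}}_f$ is robustly minimal, i.e.\ it is the union $\cM_j(M)=\cM^{\mathrm{s}}_j(M)\cup\cM^{\mathrm{u}}_j(M)$ that is open and dense, not the intersection. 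Applying the same dichotomy to $f^{-1}$ gives back the identical disjunction, because the strong stable foliation of $f^{-1}$ \emph{is} the strong unstable foliation of $f$: you obtain ``$A$ or $B$'' twice, never ``$A$ and $B$''. Simultaneous robust minimality of both strong foliations is not available from the cited results, so your openness argument collapses exactly where it matters.

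The paper gets by with one minimal foliation: if, say, $f\in\cM^{\mathrm{u}}_j(M)$, then for every saddle $q$ of $\mathrm{s}$-index $j+1$ the manifold $W^{\mathrm{u}}(q,f)$ is a full leaf of $\cF^{\mathrm{uu}}_f$, hence dense, and a uniform transversality constant $K$ (every strong unstable disc of radius at least $K$ meets $W^{\mathrm{s}}(q,f)$ transversely, by minimality and compactness) already forces $H(q,f)=M$---no density of $W^{\mathrm{s}}(q,f)$ is needed. The saddle of the other index is then handled by \cite[Theorem E]{BDPR} (Remark~\ref{r.bdpr}), which gives an open and dense set where all homoclinic classes coincide. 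Your argument is repaired by dropping the requirement that both invariant manifolds of the same saddle be dense and routing the second index through that coincidence instead. Incidentally, the robust-cycle/blender detour in your first step is superfluous for this lemma: the only conclusion you extract from it is $H(p_g,g)=H(q_g,g)$ on an open and dense set, which is Remark~\ref{r.bdpr} directly.
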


\begin{proof}
For the diffeomorphisms $f\in \cR\cT_j(M)$ there are defined the strong stable foliation
$\cF^{\mathrm{ss}}_f$  of dimension $j$ and the strong unstable foliation  $\cF^{\mathrm{uu}}_f$ 
of dimension $\dim(M)-j-1$.
Recall that $\cF^{\mathrm{ii}}_f$, $\mathrm{i}=\mathrm{s}, \,\mathrm{u}$, is the only $Df$-invariant 
foliation of dimension $\dim (E^{\mathrm{ii}})$ 
 tangent to $E^{\mathrm{ii}}$.

 The foliation $\cF^{\mathrm{ii}}_f$ is \emph{minimal} if every leaf $F^{\mathrm{ii}}_f(x)$ of
 $\cF^{\mathrm{ii}}_f$ is dense in $M$. The  foliation $\cF^{\mathrm{ii}}_f$ is \emph{$C^1$-robustly minimal}
 if there is a $C^1$-neighbourhood $\cV_f$ of $f$ such that for every $g\in \cV_f$ the foliation
 $\cF^{\mathrm{ii}}_g$ is minimal.
 We denote by $\cM^{\mathrm{i}}_j(M)$, $i=\mathrm{s,u}$, the open subset of $\cR\cT_j(M)$ of diffeomorphisms such that
 $\cF^{\mathrm{ii}}_f$ is robustly minimal. 
 Let
  $$ 
  \cM_j(M) \eqdef \cM^{\mathrm{s}}_j(M) \cup  \cM^{\mathrm{u}}_j(M).
  $$

\begin{lemm}[\cite{BDU,RHU}]
\label{l.minimal}
 The set $\cM_j(M)$ is open and dense in $\cR\cT_j(M)$.
\end{lemm}

We need the following property.

\begin{clai}$\,$
\label{cl.sacocheio}
\begin{itemize}
\item
Let  $f\in \cM^{\mathrm{u}}_{j}(M)$. Then $H(q,f)=M$ for every saddle 
$q$ of $\mathrm{s}$-index $j+1$.
\item 
Let  $f\in \cM^{\mathrm{s}}_{j}(M)$. Then $H(q,f)=M$ for every saddle 
$q$ of $\mathrm{s}$-index $j$. 
\end{itemize}
\end{clai}

\begin{proof}
We prove the first item, the second one is analogous and thus omitted.
Fix any hyperbolic periodic point  $q$ of $\mathrm{s}$-index $j+1$. Then the unstable manifold of $q$ is a leaf
of $\cF_f^{\mathrm{uu}}$, hence  it is dense in $M$.

The minimality of $\cF_f^{\mathrm{uu}}$ and the fact that $W^{\mathrm{s}}(q,f)$ contains a disc of dimension $j+1$ transverse
to $\cF_f^{\mathrm{uu}}$  imply that there is $K>0$ such that 
$W^{\mathrm{s}}(q,f)$ intersects transversely every strong unstable disc of radius larger than $K$.

Take now any point $x\in M$ and any $\epsilon>0$. 
We see that the ball $B_\epsilon (x)$ intersects $H(q,f)$. Since this holds for any $x\in M$ and $\epsilon>0$
and $H(q,f)$ is closed this implies $H(q,f)=M$.

The density of $W^{\mathrm{u}}(q,f)$ implies that there is
a disc  $\Delta\subset W^{\mathrm{u}}(q,f)$ of dimension $\dim (E^{\mathrm{uu}})$ contained in $B_\epsilon(x)$. Since $Df$ expands the vectors tangent to $E^{\mathrm{uu}}$
there is $n>0$ such that $f^n(\Delta)$ has radius at least $K$. Thus $f^n(\Delta)$ meets transversely $W^{\mathrm{s}}(q,f)$.
Thus $\Delta$ contains a point of the homoclinic class of $q$. This implies the claim.
\end{proof}

By \cite[Theorem E]{BDPR} (see also Remark~\ref{r.bdpr}) in this partially hyperbolic setting with one-dimensional center, 
there is an open and dense subset $\cP_j(M)$ of $\cR\cT_j(M)$ such that 
for every pair of saddles $p_f$ and $q_f$ of $f$ it holds $H(p_f,f)=H(q_f,f)$. Note that this claim is
only relevant when the saddles have different indices.

In view of Claim~\ref{cl.sacocheio} to prove Lemma~\ref{l.l.h(q)} it is enough to take 
$$
\cI_j(M)=\cP_j(M)\cap \cM_j(M)
$$
that is open and dense in $\cR\cT_j(M)$ (recall Lemma~\ref{l.minimal}).
\end{proof}
The proof of  Proposition~\ref{p.l.h(q)}  is now complete.
\end{proof}



\end{document}